\newtheorem{theorem}{Theorem}[section]
\newtheorem{proposition}[theorem]{Proposition}
\newtheorem{corollary}[theorem]{Corollary}
\newtheorem{lemma}[theorem]{Lemma}
\newtheorem{remark}[theorem]{Remark}
\newtheorem{definition}[theorem]{Definition}
\newtheorem{problem}[theorem]{Problem}
\newtheorem{ex}[theorem]{Example}
\newtheorem*{mainthm}{Theorem~\ref{main}}
\newtheorem*{maincor}{Corollary~\ref{mc}}
\newcommand{\R}{{\mathbb {R}}}
\newcommand{\z}{{\mathbb {Z}}}
\newcommand{\Mod} {\mathrm{Mod}}
\newcommand{\h}{{\mathcal{H}}}
\newcommand{\J}{{\mathcal{J}}}
\newcommand{\I}{{\mathcal{I}}}
\newcommand{\Lie}{{\mathcal{L}}}
\newcommand{\Hom}{{\mathrm{Hom}}}
\newcommand{\im}{{\mathrm{im}}}
\newcommand{\p }{{\mathcal{P}}}
\begin{document}

\def\Xint#1{\mathchoice
{\XXint\displaystyle\textstyle{#1}}%
{\XXint\textstyle\scriptstyle{#1}}%
{\XXint\scriptstyle\scriptscriptstyle{#1}}%
{\XXint\scriptscriptstyle\scriptscriptstyle{#1}}%
\!\int}
\def\XXint#1#2#3{{\setbox0=\hbox{$#1{#2#3}{\int}$}
\vcenter{\hbox{$#2#3$}}\kern-.5\wd0}}
\def\ddashint{\Xint=}
\def\dashint{\Xint-}

\pagenumbering{roman}
\begin{spacing}{1.66}
\begin{center}
\thispagestyle{empty}
RICE UNIVERSITY \\
 
{\bfseries Surface Homeomorphisms That Do Not Extend to Any Handlebody and the Johnson Filtration} \\
 
by \\
 
{\bfseries Jamie Bradley Jorgensen} \\
 
\begin{singlespace}
A THESIS SUBMITTED \\
IN PARTIAL FULFILLMENT OF THE \\
REQUIREMENTS FOR THE DEGREE \\
\end{singlespace}
 
{\bfseries Doctor of Philosophy} \\

\begin{singlespace}
\begin{tabular}{rl}
\rule{13em}{0em} & \rule{18em}{0em} \\
& {\scshape Approved, Thesis Committee}: \\
& \\
& \\
& \underline{\hspace{20em}} \\
& Tim Cochran, Professor, Chair\\
& Mathematics \\
& \\
& \\
& \underline{\hspace{20em}} \\
& John Hempel, Milton B. Porter Professor\\
& Mathematics \\
& \\
& \\
& \underline{\hspace{20em}} \\
& Ian M. Duck, Professor \\
& Physics and Astronomy \\
& \\
& \\
& \\
 
\end{tabular}
\end{singlespace}
 
{\scshape Houston, Texas} \\
 
{\scshape April 2008} \\
\end{center}
\newpage

\begin{center}
\addcontentsline{toc}{chapter}{\numberline {}Abstract}
\thispagestyle{empty}
ABSTRACT
\bigskip
 
Surface Homeomorphisms That Do Not Extend to Any Handlebody and the Johnson Filtration
\bigskip
 
by
\bigskip
 
Jamie Bradley Jorgensen
\end{center}
We prove the existence of homeomorphisms of a closed, orientable surface of genus 3 or greater that do not extend to any handlebody bounded by the surface.  We show that such homeomorphisms exist arbitrarily deep in the Johnson filtration of the mapping class group.  The second and third terms of the Johnson filtration are the well-known Torelli group and Johnson subgroup, respectively.  
\newpage
\begin{center}
\addcontentsline{toc}{chapter}{\numberline {}Acknowledgments}
\thispagestyle{empty}
ACKNOWLEDGMENTS
\end{center}
\bigskip
In order of importance, these are the people to whom I dedicate this thesis:

1) Arlynda,

\qquad Who is everything to me;

\smallskip

2) My Children,

\qquad Who remind me what is important;

\smallskip
   
3) My Advisor, Tim Cochran,

\qquad For help, patience, and faith in my abilities;

\smallskip

4) My friends at Rice,

\qquad For good times and new perspectives;

\smallskip

5) Everyone who loves ${\mathcal M}$apping class groups.
\vfill
\hfill\Bicycle

\newpage
{\makeatletter
\let\ps@plain=\ps@empty
\addcontentsline{toc}{chapter}{\numberline {}Contents}
\tableofcontents}
\thispagestyle{myheadings}

\pagestyle{myheadings}

\begin{chapter}{Introduction}
\pagenumbering{arabic}
\setcounter{page}{1}

Surfaces, or two-dimensional manifolds, are among the most fundamental objects of study in mathematics.  
Surfaces are most easily pictured as subsets of $\R^3$.
Perhaps the first to consider \emph{abstract} surfaces was Riemann in his study of 
holomorphic functions.  
A homeomorphism of a surface is a continuous function from the surface to itself, having a continuous inverse.  Homeomorphisms can be thought of as the ``topological symmetries'' of a surface.  The study of surfaces has proven to be of interest in many areas of mathematics, such as low-dimensional topology and knot theory, algebraic geometry, complex analysis and differential geometry.  In fact, it is the rich interaction between these areas of mathematics that has made the study of surfaces so fruitful.  For instance, many topological results are proven by imposing a hyperbolic metric on a given surface.

The mapping class group of a surface is an object that algebraically ``encodes'' the topological symmetries of the surface.  A mapping class is an isotopy class of homeomorphisms, that is, a set of homeomorphisms that can be continuously deformed into one another.  In most instances when a surface is being studied---in any area of mathematics---there is a mapping class group lurking somewhere in the background.  

Many results in mapping class groups have some sort of algebraic hypothesis and a geometric conclusion.  For instance, the Nielsen realization theorem (which was proved by Kerckhoff, \cite{kerck}) says that finite subgroups of mapping class groups (satisfying some hypotheses) can be realized as \emph{isometries} of the surface in some hyperbolic metric.  Such results can be thought of as saying that, given certain algebraic constraints on mapping classes, some associated geometric behavior is nice.  

On the other hand, there are instances of mapping classes satisfying very strict algebraic constraints, but that misbehave geometrically.  
The result of this thesis can be thought of as an example of this latter phenomenon.  Namely, we show that there are homeomorphisms lying arbitrarily deep in the Johnson filtration of the mapping class group (this is our algebraic constraint) that do not extend to any handlebody (this is the geometric misbehavior).

\section{Preliminaries}

Let $\h$ be a handlebody of genus $g$, by which we mean a (closed) 3-ball with $g$ 1-handles added in such a manner that $\h$ is orientable.  The homeomorphism type of $\h$ depends only on $g$.  Let $\Sigma$ be the boundary of $\h$.  Then $\Sigma$ is an orientable, closed surface of genus $g$.  Suppose that $f:\Sigma\to\Sigma$ is a homeomorphism.  Then $f$ is said to \emph{extend} to $\h$ if there is a homeomorphism $F:\h\to\h$ that restricts to $f$ on the boundary.

The question of a homeomorphism of a surface extending to a handlebody is an important one.  For instance, suppose $M=\h_1\cup_\Sigma\h_2$ is a Heegaard splitting of a 3-manifold $M$.  Perturb $M$ to get a new 3-manifold $M'$ by cutting open along the surface $\Sigma$ and re-gluing via some homeomorphism $f:\Sigma\to\Sigma$.  If $f$ extends to either $\h_1$ or $\h_2$, then $M'$ is homeomorphic to $M$.

\begin{definition}\label{some}
A homeomorphism $f$ is said to extend to \emph{some} handlebody if there is a homeomorphism $h:\Sigma\to\partial\h$ (thinking now of $\Sigma$ as an abstract surface not associated with $\h$) such that $h\circ f\circ h^{-1}:\partial\h\to\partial\h$ extends to $\h$.
\end{definition}

By $j'$ we denote the inclusion $j':\partial\h\hookrightarrow\h$.  Given a homeomorphism $h:\Sigma\to\partial\h$, we will use the notation $j:=j'\circ h:\Sigma\hookrightarrow\h$.  Also, we will generally use the phrase ``let $j:\Sigma\hookrightarrow\h$ be a handlebody bounded by $\Sigma$'' to mean ``let $h:\Sigma\to\partial\h$ be a homeomorphism with $j=j'\circ h$.''

\section{Introductory examples}\label{sec:introexamples}

Having given Definition~\ref{some} it is often convenient to build handlebodies directly starting with the surface $\Sigma$ itself, as in the following example.

\begin{ex}\label{Dehntwist}
Let $u_1,\ldots,u_m$ be any set of pairwise-disjoint, simple closed curves on $\Sigma$.  Then any homeomorphism $f$ which is a composition of Dehn twists and inverse Dehn twists about the $u_i$ extends to some handlebody.  This can be seen by building the handlebody: Thicken $\Sigma$ by crossing with the interval $[0,1]$.  Attach 2-handles along each $u_i\times\{0\}$.  Attach more 2-handles if necessary to leave $\Sigma\times\{1\}$ and 2-spheres as the only boundary components.  Finally, cap off each such 2-sphere with a 3-ball.  The homeomorphism $f$ extends to the handlebody thus obtained.
\end{ex}

It is not difficult, at least in the case of genus one, to find homeomorphisms that do not extend to any handlebody, as demonstrated in the following example.

\begin{ex}\label{eigen}
Let $T$ be a torus and $f:T\to T$ a homeomorphism that extends to a
homeomorphism $F$  of some solid torus $\h$.  Let $\alpha$ be a curve
on $T$ which bounds a disk $\Delta$ in $\h$.  Then $\Delta$ must be carried to
a disk by $F$.  Thus, $[\alpha]$ and $f_*[\alpha]\in H_1(T,\R)$ 
are in the kernel of the inclusion induced map $i_*:H_1(T,\R) \to
H_1(\h,\R)$.  $i_*$ has one dimensional kernel so $[\alpha]$ and
$f_*[\alpha]$ must be linearly dependent.  Thus, $[\alpha]$ is an
eigenvector of $f_*$.  Since $f$ is a homeomorphism the eigenvalue corresponding to $[\alpha]$ must be $\pm 1$.  It follows that, for example, Anosov mapping classes of the torus do not extend to any handlebody.
\end{ex}

Some applications of homeomorphisms that do not extend to any handlebody are given by the following theorem found in \cite{CG}.

\begin{theorem}[Casson-Gordon]\label{ribbon}
A fibered knot in a homology 3-sphere is homotopically ribbon if and only if its closed monodromy extends to a handlebody.
\end{theorem}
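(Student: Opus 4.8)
To prove this equivalence I would argue the two implications by completely different methods. Throughout, let $F$ be the fiber surface of $K$ (genus $h$, with $\partial F = K$), let $\phi\colon F\to F$ be the monodromy, normalized to be the identity on $\partial F$, let $\widehat F = F\cup_{\partial F}D^2$ be the closed-up fiber, and let $\hat\phi = \phi\cup\mathrm{id}_{D^2}$ be the closed monodromy. The elementary fact I would record first is that the knot exterior $M\setminus\mathrm{int}\,N(K)$ is the mapping torus of $\phi$, so that the $0$-surgered manifold $M_0(K)$ is the mapping torus of $\hat\phi$ --- capping $\partial F$ off by a disk is $0$-framed Dehn filling, since $\partial F$ is the Seifert longitude. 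I would also use the standard fact that $\Delta_K(1) = \pm 1$ since $M$ is a homology sphere; as $\Delta_K(t)$ is the characteristic polynomial of $\phi_*$ on $H_1(F)\cong\z^{2h}$, this says $\phi_* - \mathrm{id}$ --- equivalently $\hat\phi_* - \mathrm{id}$ on $H_1(\widehat F)$ --- is an isomorphism.

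Suppose first that $\hat\phi$ extends to $\Phi\colon\h\to\h$ with $\partial\h = \widehat F$; I would show $K$ is homotopically ribbon. Form the mapping torus $N$ of $\Phi$, an $\h$-bundle over $S^1$ with $\partial N = M_0(K)$, and glue it along $M_0(K)$ to the trace $V$ of the $0$-surgery (a cobordism from $M$ to $M_0(K)$), obtaining $W$ with $\partial W = M$. A Mayer--Vietoris argument, fed by the Wang sequence of $N$, shows $W$ is a homology $4$-ball: the vanishing one needs comes from $\hat\phi_* - \mathrm{id}$ being an isomorphism, hence so is the map it induces on the free quotient $H_1(\h)$, which kills the otherwise extra first homology of $N$. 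The core of the $0$-framed $2$-handle is a properly embedded disk $D\subset W$ with $\partial D = K$, and a routine van Kampen computation --- the fiber surface group surjecting onto the free group $\pi_1(\h)$, the meridian of $K$ accounting for the base-circle direction of $N$ --- shows $\pi_1(M\setminus K)\to\pi_1(W\setminus D)$ is onto. So $K$ is homotopically ribbon.

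Conversely, suppose $K$ bounds a ribbon disk $D$ in a homology $4$-ball $W$ with $\partial W = M$ and $\pi_1(M\setminus K)\to\pi_1(W\setminus D)$ onto; I would produce a handlebody over which $\hat\phi$ extends. Set $W_D = W\setminus\mathrm{int}\,N(D)$; this is a homology $S^1\times B^3$ whose boundary is $M_0(K)$, the mapping torus of $\hat\phi$, and $\pi_1(\partial W_D)\to\pi_1(W_D)$ is onto. Because $H^1(W_D)\cong\z$ restricts isomorphically onto $H^1(\partial W_D)$, the fibration $\partial W_D\to S^1$ with fiber $\widehat F$ extends to a map $W_D\to S^1$, a generic preimage of which is a compact, non-separating, properly embedded $3$-submanifold $\h_0\subset W_D$ with $\partial\h_0 = \widehat F$; cutting $W_D$ along $\h_0$ exhibits it, up to homology, as a mapping torus of a self-homeomorphism of $\h_0$ that restricts to $\hat\phi$ on $\partial\h_0$. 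The crux, and the whole point of \cite{CG}, is then to upgrade $\h_0$ to a handlebody: the surjectivity hypothesis is exactly what lets one apply a loop theorem for duality spaces and modify $\h_0$ by embedded compressions until the fiber is a $3$-manifold with free fundamental group and still with boundary $\widehat F$, hence a handlebody, over which $\hat\phi$ then extends. I expect the first implication and the homology-and-duality bookkeeping of the second to be routine, and this loop-theorem step to be the real obstacle.
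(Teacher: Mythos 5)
The paper does not prove this statement; it quotes it directly from Casson--Gordon \cite{CG}, so there is no in-paper proof to compare against. Taking your sketch on its own merits: the forward direction (``monodromy extends $\Rightarrow$ homotopically ribbon'') is essentially the standard Casson--Gordon construction and your outline is correct. The mapping torus $N$ of $\Phi$ glued to the $0$-surgery trace along $M_0(K)$, the Wang sequence together with $\Delta_K(1)=\pm1$ forcing $\Phi_*-\mathrm{id}$ onto $H_1(\h)$, and the resulting Mayer--Vietoris computation that $W$ is a homology $4$-ball are all right, as is the van Kampen surjectivity (driven by $\pi_1(F)\twoheadrightarrow\pi_1(\h)$ and the matching $\z$ factors). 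The one cosmetic point is that the core of the $2$-handle alone has boundary in the interior of $W$; the properly embedded disk with $\partial D=K\subset\partial W$ is the core together with the product annulus $K\times[0,1]$, or equivalently the cocore if you build $W$ by attaching the handle to $N$ along a meridian.

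The converse is where the genuine mathematical content lies, and your sketch has a real gap at precisely the step you flag. Producing, by compressions in $W_D$, a $3$-submanifold $\h_0'\subset W_D$ with free fundamental group and $\partial\h_0'=\widehat F$ (hence a handlebody) is \emph{not by itself} enough to conclude that $\hat\phi$ extends over $\h_0'$. A homeomorphism of $\widehat F$ extends over a handlebody $\h$ bounded by $\widehat F$ if and only if it preserves the ``disk kernel'' $\kappa=\ker\bigl(\pi_1(\widehat F)\to\pi_1(\h)\bigr)$; this invariance has to be proved, and it is exactly where the $\pi_1$-surjectivity hypothesis earns its keep a second time. The mechanism is that in $\pi_1(W_D)$ the automorphism $\hat\phi_*$ of $\pi_1(\widehat F)$ is realized as conjugation by a lift of the generator of $\pi_1(W_D)/\![\text{im }\pi_1(\widehat F)]\cong\z$, so the kernel of $\pi_1(\widehat F)\to\pi_1(W_D)$ is automatically $\hat\phi_*$-invariant; one then needs the compressed fiber $\h_0'$ to have $\ker\bigl(\pi_1(\widehat F)\to\pi_1(\h_0')\bigr)$ equal to that kernel, which requires the loop-theorem-for-duality-pairs argument to be carried out with some care (keeping $\widehat F\to\h_0'$ $\pi_1$-surjective at every stage and checking that the compressions occur on the correct side). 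Your phrase ``cutting $W_D$ along $\h_0$ exhibits it, up to homology, as a mapping torus'' is not a substitute for this: a homology product cobordism from $\h_0'$ to itself rel $\widehat F$ does not supply a self-homeomorphism of $\h_0'$ extending $\hat\phi$. So: right scaffolding for both directions, forward direction essentially complete, but the converse as written asserts rather than proves the one implication that is actually hard.
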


For instance in \cite{B}, Bonahon has used Theorem~\ref{ribbon} to find an infinite family of knots, none of which is ribbon, but each of which is algebraically slice.  

\begin{ex}\label{slice}
Conversely, Theorem~\ref{ribbon} says that since the trefoil is
not slice (since it has signature -2, for instance) its closed monodromy $f:\Sigma\to\Sigma$ does not extend to any handlebody.
Of course we can also see that $f$ does not extend to any handlebody from its action on $H_1$, as in Example~\ref{eigen}.  By appropriately choosing a basis for $H_1(\Sigma)$, $f_*$ may be represented by the matrix
\[\left( \begin{array}{cc}
0 & -1 \\
1 & 1 \\
\end{array} \right)\]

(see, for example, \cite{R}).  Neither $\pm 1$ is an eigenvalue of this matrix.  Note, however, that $f$ is not an Anosov homeomorphism, as $f^6$ is the identity.

\end{ex}

Similarly, it is easy to find higher genus examples of homeomorphisms that do not 
extend to any handlebody by their action on $H_1$, as in the following example.

\begin{ex}\label{general}
As a generalization of the technique used in Example~\ref{eigen}, if a 
homeomorphism $f:\Sigma\to\Sigma$ extends to some
handlebody, then $\ker j_*$ is an invariant, $g$-dimensional subspace of $f_*:H_1(\Sigma,\R)\to H_1(\Sigma,\R)$.
Suppose $\Sigma$ has genus 2.  Fix a symplectic basis for $H_1(\Sigma)$.  Let $f$ be a homeomorphism of $\Sigma$
 whose induced transformation on $H_1(\Sigma)$ is given, in the chosen basis, by the following matrix:
\[
M=
\left(
\begin{array}{cccc}
0 & 0 & 0 & -1\\
1 & 0 & 0 & 0\\
0 & 1 & 0 & 0\\
0 & 0 & 1 & 0\\
\end{array}
\right)
\]
There are a couple of things to be aware of here.  First, if such an $f$ exists, its mapping class is by no means unique.  Second, since $M$ is a symplectic matrix, such an $f$ does exist (see, for example, \cite{primer}).  
We claim that such an $f$ does not extend to any handlebody.
It suffices to check that $M:\R^4\to\R^4$ has no invariant 2-dimensional subspaces.  First note that if $V\subset\R^4$ is an invariant subspace with basis $\{v_1,v_2\}$, then ${\mathrm{span}}_{\mathbb C}\{v_1,v_2\}$ is an invariant subspace of $M:{\mathbb C}^4\to{\mathbb C}^4$.  The 2-dimensional invariant subspaces of $M:{\mathbb C}^4\to{\mathbb C}^4$ can be found (using the techniques of \cite{subspaces}, for instance).  It is straightforward to check that none of these subspaces has a real basis.
\end{ex}

Theorem~\ref{slice} also provides examples of homeomorphisms that do not extend to any handlebody in arbitrary genus.  For instance, one could take the connect sum of trefoils for such an example.

\section{Model for $\Sigma$}\label{sec:model}
In this section we define a model that we will be using for the surface $\Sigma$.  The notational conventions defined in this section will be observed throughout this thesis.  However, in Section~\ref{robusthomeos} we will find it necessary to define a new model for $\Sigma$, but we will relate the new model to the model defined in this section.  We will find it necessary to work with both the genus $g$ surface $\Sigma$ and a genus $g$ subsurface $\Sigma_1$, with one boundary component.

Let $P$ be a regular polygonal region with $4g$ sides.
Let $\Sigma$ be obtained from $P$ by identifying the edges according to the word $\tilde a_1\tilde b_1\tilde a_1^{-1}\tilde b_1^{-1}\cdots \tilde a_g\tilde b_g\tilde a_g^{-1}\tilde b_g^{-1}$.  Let $D$ be a small disk at the center of $P$, as in Figure~\ref{fig:polygon}.
\begin{figure}[h!tp]
\centering
\begin{picture}(200,206)(0,0)
{\includegraphics[width=.49\textwidth]{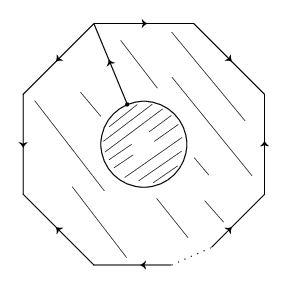}}
\put(-115,103){\large$D$}
\put(-123,142){\small$x_0$}
\put(-113,204){\small$\tilde a_1$}
\put(-42,176){\small$\tilde b_1$}
\put(-13,106){\small$\tilde a_1$}
\put(-42,32){\small$\tilde b_1$}
\put(-113,171){\small$\delta$}
\put(-113,3){\small$\tilde a_g$}
\put(-187,32){\small$\tilde b_g$}
\put(-214,106){\small$\tilde a_g$}
\put(-187,176){\small$\tilde b_g$}
\end{picture}
\caption{A model for the surface $\Sigma$ defining the curves $\tilde a_i,\tilde b_i$.}\label{fig:polygon}
\end{figure}
Let $\Sigma_1:=\Sigma-\mathrm{interior}(D)$, with inclusion denoted $i:\Sigma_1\hookrightarrow\Sigma$.
Let $x_0$ be a point on the boundary of $D$.  We will take all of our fundamental groups to be based at $x_0$.  For instance $\pi_1(\Sigma):=\pi_1(\Sigma,x_0)$ and $\pi_1(\Sigma_1):=\pi_1(\Sigma_1,x_0)$.
Let $\delta$ be a simple arc extending from $x_0\in\partial D$ to the vertex in $\partial P$, and let $a_i=\delta \tilde a_i\delta^{-1}$, $b_i=\delta \tilde b_i\delta^{-1}$.
Note that the isotopy classes $\{[a_i],[b_i]\in\pi_1(\Sigma_1)\}$ form a basis for the free group $\pi_1(\Sigma_1)$.  
Let $\alpha_i$ and $\beta_i$ be the images of $a_i$ and $b_i$, respectively in $H_1(\Sigma)$ (which we identify canonically with $H_1(\Sigma_1)$).  We orient $\Sigma$ by giving $P$ the ``into the page'' orientation.  Thus, the algebraic intersection number $\alpha_i\cdot\beta_i=1$, in accordance with the right-hand rule.  Let $\omega$ be the symplectic form on $H_1(\Sigma)$ given by algebraic intersection number.  Note that $\{\alpha_1,\ldots,\alpha_g,\beta_1,\ldots,\beta_g\}$ is a symplectic basis for $H_1(\Sigma)$.
\end{chapter}

\begin{chapter}{The Lie ring associated to a group}\label{sec:Lie}

\section{Basic definitions}

In this section we define the lower central series of a group, along with its associated Lie ring structure, and give some properties that we will need.  For a group $G$ and $x,y\in G$ we write the commutator of $x$ and $y$: $[x,y]=xyx^{-1}y^{-1}$.  For subgroups $H,K\subset G$ we let $[H,K]$ denote the subgroup of $G$ generated by all commutators $[x,y]$, with $x\in H$ and $y\in K$.  
\begin{definition}
The \emph{lower central series} of $G$ is the filtration 
\[
G=G_1\supset G_2\supset G_3\supset\cdots
\]
defined by $G_1:=G$ and $G_{k+1}:=[G,G_k]$.
\end{definition}

\begin{definition}
A \emph{Lie ring} $L$ is an abelian group (with group operation denoted by $+$), with a \emph{bracket operation} denoted $[\cdot,\cdot]$, satisfying:
\begin{itemize}
\item Bilinearity
\[
[x+y,z]=[x,z]+[y,z]\quad\textrm{and}\quad[x,y+z]=[x,y]+[x,z];
\]
\item Skew-commutativity
\[
[x,y]=-[y,x];\quad\textrm{and}
\]
\item Jacobi identity
\[
[x,[y,z]]+[y,[z,x]]+[z,[x,y]]=0
\]
\end{itemize}
for all $x,y,z\in L$.  $L$ is said to be \emph{graded} if $L$ decomposes (as an abelian group) as a direct sum
\[
{L}=\bigoplus_{i\in\z}{L}_i
\]
such that if $x\in{L}_i$ and $y\in{L}_j$ then $[x,y]\in{L}_{i+j}$.
\end{definition}

Of course a Lie ring is not a ring but a non-associative ring.  Lie rings also do not have a multiplicative identity.
We have used $[\cdot,\cdot]$ to denote both the commutator in a group and the bracket operation in a Lie ring.  We have done so because these two are intimately related in the context in which we will be using them.

For a group $G$ let
\[
\Lie(G):=\bigoplus_{i\ge1}\Lie_i(G),\quad\textrm{where}\quad\Lie_i(G):=\frac{G_i}{G_{i+1}}.
\]
$\Lie(G)$ has a natural graded Lie ring structure, with the bracket operation induced by the commutator in the following sense:  Suppose that $x\in G_i$, $y\in G_j$, then the commutator $[x,y]\in G_{i+j}$.  That is, $[G_i,G_j]\subset G_{i+j}$ (see, for instance, \cite{MKS} Theorem~5.3).  Thus, for
$\bar x\in\Lie_i(G)\subset\Lie(G)$ and $\bar y\in\Lie_j(G)\subset\Lie(G)$
we define
\[
[\bar x,\bar y]:=\overline{[x,y]}\in\Lie_{i+j}(G).
\]
\begin{proposition}
Extending the above definition of the bracket linearly to all of $\Lie(G)$ gives $\Lie(G)$ the structure of a Lie ring.
\end{proposition}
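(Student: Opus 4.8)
The plan is to establish the Lie-ring axioms one at a time, in the order: well-definedness together with biadditivity of the pairing $\Lie_i(G)\times\Lie_j(G)\to\Lie_{i+j}(G)$, $(\bar x,\bar y)\mapsto\overline{[x,y]}$, on the homogeneous summands; then skew-commutativity; then the Jacobi identity. Once the homogeneous pairing is known to be biadditive, its $\z$-bilinear extension to $\Lie(G)=\bigoplus_i\Lie_i(G)$ is forced and unambiguous, so ``bilinearity'' needs no separate argument, and skew-commutativity and Jacobi, being multilinear identities, need only be checked on homogeneous elements. The grading requirement $[\Lie_i(G),\Lie_j(G)]\subseteq\Lie_{i+j}(G)$ is just a reformulation of $[G_i,G_j]\subseteq G_{i+j}$, cited above.

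The single elementary fact driving every step is this: for $w\in G_n$ and $a\in G$ one has ${}^{a}w:=awa^{-1}=w\cdot[w^{-1},a]$ with $[w^{-1},a]\in[G_n,G]=G_{n+1}$, so conjugation by an arbitrary element of $G$ acts as the identity on $\Lie_n(G)$. Together with the commutator identity $[uv,w]=u[v,w]u^{-1}\cdot[u,w]$, this settles well-definedness and biadditivity at once. For $x,x'\in G_i$ and $y\in G_j$ it gives $[xx',y]=x[x',y]x^{-1}\cdot[x,y]$; since $[x',y],[x,y]\in G_{i+j}$, passing to $\Lie_{i+j}(G)$ kills the conjugation, and as that group is abelian we obtain $\overline{[xx',y]}=\overline{[x,y]}+\overline{[x',y]}$. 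In particular, if $c\in G_{i+1}$ then $[xc,y]\equiv[x,y]\pmod{G_{i+j+1}}$, because the error term $x[c,y]x^{-1}$ lies in $G_{i+j+1}$; hence the pairing is independent of the chosen representative of $\bar x$, and the arguments in the second variable are identical. Skew-commutativity on homogeneous elements is immediate from $[x,y]^{-1}=[y,x]$ in $G$.

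The one substantive point is the Jacobi identity. For $x\in G_i$, $y\in G_j$, $z\in G_k$ I would start from the Hall--Witt identity, which in one standard form reads
\[
[[x,y^{-1}],z]^{y}\cdot[[y,z^{-1}],x]^{z}\cdot[[z,x^{-1}],y]^{x}=e
\]
in any group (with $w^{g}:=g^{-1}wg$). Here $[x,y^{-1}]\in G_{i+j}$, so each outer double commutator lies in $G_{i+j+k}$, and reducing the relation modulo $G_{i+j+k+1}$ once more erases the conjugations and converts the product into a sum in the abelian group $\Lie_{i+j+k}(G)$. From $[x,y^{-1}]={}^{y^{-1}}[y,x]\equiv[y,x]\pmod{G_{i+j+1}}$ we get $\overline{[x,y^{-1}]}=[\bar y,\bar x]$, so by the well-definedness just proved the reduced relation becomes
\[
[[\bar y,\bar x],\bar z]+[[\bar z,\bar y],\bar x]+[[\bar x,\bar z],\bar y]=0;
\]
applying skew-commutativity in the outer bracket of each term turns this into $[[\bar x,\bar y],\bar z]+[[\bar y,\bar z],\bar x]+[[\bar z,\bar x],\bar y]=0$, which is the Jacobi identity --- equivalent, again via skew-commutativity, to the cyclic form $[\bar x,[\bar y,\bar z]]+[\bar y,[\bar z,\bar x]]+[\bar z,[\bar x,\bar y]]=0$ appearing in the definition. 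Extending this trilinearly over the direct sum completes the proof. The only step calling for genuine care is the bookkeeping around the Hall--Witt identity: tracking the signs introduced by the inverses $y^{-1},z^{-1},x^{-1}$ and checking that the discarded conjugations and error terms really do lie in $G_{i+j+k+1}$. Everything else is a formal consequence of $[G_i,G_j]\subseteq G_{i+j}$ and the abelianness of each $\Lie_n(G)$.
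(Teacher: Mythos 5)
Your proof is correct. The paper itself gives no argument---it simply cites \cite{MKS}, Theorem~5.3---and what you have written is precisely the standard argument that reference contains: the observation that conjugation by $G$ acts trivially on each $\Lie_n(G)$ (because ${}^{a}w = w\cdot[w^{-1},a]$ with the error term one step deeper in the filtration) settles well-definedness and biadditivity via $[uv,w]=u[v,w]u^{-1}\cdot[u,w]$; skew-commutativity is immediate from $[x,y]^{-1}=[y,x]$; and the Jacobi identity follows by reducing the Hall--Witt identity modulo $G_{i+j+k+1}$, where the conjugations and the $y^{-1},z^{-1},x^{-1}$ discrepancies all vanish. The one bookkeeping point you flag---that $\overline{[x,y^{-1}]}=[\bar y,\bar x]$, justified via ${}^{y^{-1}}[y,x]$ and well-definedness---is exactly the step that needs care, and you handle it correctly, so nothing is missing.
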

\begin{proof}
See \cite{MKS}, Theorem~5.3.
\end{proof}
The construction of the graded Lie ring structure on $\Lie(G)$ only depends on the fact that the lower central series is a \emph{central filtration} which we define now.
\begin{definition}
A sequence of subgroups $G_{\bar 1}$, $G_{\bar 2}$,$\ldots$ of a group $G$ is a \emph{central filtration} if
\begin{itemize}
\item $G_{\bar 1}=G$,
\item $G_{\overline{i+1}}\subset G_{\bar i}$, and
\item $[G_{\bar i},G_{\bar j}]\subset G_{\overline{i+j}}$
\end{itemize}
\end{definition}
The above proof has shown:
\begin{proposition}
Suppose that $G_{\bar i}$ is a central filtration of $G$.  Then
\[
\bar\Lie(G):=\frac{G_{\bar 1}}{G_{\bar 2}}\oplus\frac{G_{\bar 2}}{G_{\bar 3}}\oplus\cdots
\]
has a graded Lie ring structure with bracket operation induced by the commutator operation.
\end{proposition}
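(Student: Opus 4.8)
The plan is to carry out, for a general central filtration, exactly the argument used for the lower central series in \cite{MKS}, Theorem~5.3, invoking nothing beyond the three defining axioms. Write $\bar\Lie_k(G):=G_{\bar k}/G_{\overline{k+1}}$ for the $k$-th summand. First I would record two immediate consequences of the axioms that get used everywhere: (i) each $G_{\bar k}$ is normal in $G$, since $[G,G_{\bar k}]=[G_{\bar 1},G_{\bar k}]\subset G_{\overline{k+1}}\subset G_{\bar k}$; and (ii) each $\bar\Lie_k(G)$ is abelian, since $[G_{\bar k},G_{\bar k}]\subset G_{\overline{2k}}\subset G_{\overline{k+1}}$ (using $2k\ge k+1$ for $k\ge1$). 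Hence $\bar\Lie(G)$ is a well-defined abelian group, graded by $k\ge1$.

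Next I would show the bracket is well-defined. The third axiom $[G_{\bar i},G_{\bar j}]\subset G_{\overline{i+j}}$ ensures $\overline{[x,y]}$ lands in $\bar\Lie_{i+j}(G)$. Independence of the choice of representatives follows from the commutator identities $[ab,c]=a[b,c]a^{-1}\cdot[a,c]$ and $[a,bc]=[a,b]\cdot b[a,c]b^{-1}$: if $x'\in G_{\overline{i+1}}$ then $[xx',y]=x[x',y]x^{-1}\cdot[x,y]$, and $x[x',y]x^{-1}\in G_{\overline{i+j+1}}$ because $[x',y]\in[G_{\overline{i+1}},G_{\bar j}]\subset G_{\overline{i+j+1}}$ and that subgroup is normal; the second variable is symmetric. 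Extending the bracket bi-additively to all of $\bar\Lie(G)$ then makes sense, and the grading property $[\bar\Lie_i(G),\bar\Lie_j(G)]\subset\bar\Lie_{i+j}(G)$ holds by construction.

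Then I would verify the remaining Lie-ring axioms on homogeneous elements (the general case following by bilinearity). Bilinearity reduces to $\overline{[xy,z]}=\overline{[x,z]}+\overline{[y,z]}$ for $x,y\in G_{\bar i}$, $z\in G_{\bar j}$; from $[xy,z]=x[y,z]x^{-1}\cdot[x,z]$ and $x[y,z]x^{-1}[y,z]^{-1}=[x,[y,z]]\in[G_{\bar i},G_{\overline{i+j}}]\subset G_{\overline{2i+j}}\subset G_{\overline{i+j+1}}$, the conjugation is invisible in $\bar\Lie_{i+j}(G)$; the second-variable case is analogous. Skew-commutativity is immediate from $[x,y]^{-1}=[y,x]$ together with the additive (abelian) structure of $\bar\Lie_{i+j}(G)$.

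The main obstacle, as always, is the Jacobi identity, for which I would invoke the Hall--Witt identity
\[
[[x,y^{-1}],z]^{y}\,[[y,z^{-1}],x]^{z}\,[[z,x^{-1}],y]^{x}=1
\]
applied to homogeneous $x\in G_{\bar i}$, $y\in G_{\bar j}$, $z\in G_{\bar k}$. Each factor lies in $G_{\overline{i+j+k}}$, and conjugating it (by $y$, $z$, or $x$) alters it only by an element of $G_{\overline{i+j+k+1}}$ — e.g. $[[[x,y^{-1}],z],y]\in[G_{\overline{i+j+k}},G_{\bar j}]\subset G_{\overline{i+j+k+1}}$ — so all conjugations vanish on passing to $\bar\Lie_{i+j+k}(G)$; similarly $[x,y^{-1}]$ and $[y,x]$ (hence $-[\bar x,\bar y]$) have the same image there. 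Reading the Hall--Witt identity in the abelian group $\bar\Lie_{i+j+k}(G)$ and rearranging the cyclic terms yields exactly
\[
[\bar x,[\bar y,\bar z]]+[\bar y,[\bar z,\bar x]]+[\bar z,[\bar x,\bar y]]=0,
\]
and then bilinearity promotes this to all of $\bar\Lie(G)$. The only delicate points are sign bookkeeping in the Hall--Witt step and tracking which deeper filtration term absorbs each conjugation; conceptually nothing beyond \cite{MKS} is needed, since that argument never used more than the central-filtration axioms.
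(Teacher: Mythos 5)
Your proposal is correct and takes the same route as the paper: the paper simply observes that the proof of \cite{MKS} Theorem~5.3 (commutator identities for well-definedness, bilinearity, skew-symmetry, and the Hall--Witt identity for Jacobi) never uses more than the central-filtration axioms, and so carries over verbatim. You have written out precisely that argument in full detail, correctly tracking which term $G_{\overline{\,\cdot\,}}$ absorbs each conjugation and sign.
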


\section{Free Lie rings}\label{sec:freeLierings}

If $G$ is a free group, then $\Lie(G)$ is a \emph{free Lie ring}.  Free Lie rings may be defined by a universal mapping property, but it is probably easiest to think of them as Lie rings having no relations other than those imposed by the definition of a Lie ring.  Let $F=F^g$ be a free group on a set $X$ with $|X|=g$.  Then $\Lie(F)$ is a free Lie ring on the set $X$.  For each $k$, $\Lie_k(F)$ is a finite-rank, free abelian group.  The rank of $\Lie_k(F)$ is given by the number of \emph{basic commutators} of weight $k$ (see Definition~\ref{basiccomms} below).  In fact, the basic commutators of weight $k$ are elements of the group $F$ whose images in $\Lie_k(F)$ give a basis for $\Lie_k(F)$ (see, for example, \cite{hall} pp.\ 165-170).   
\begin{definition}\label{basiccomms}
The \emph{basic commutators} of weight 1 are the free generators of $F$, that is, the elements of $X$.  The basic commutators of weight $k$ are defined by first putting an order ``$<$'' on the basic commutators of weight less than $k$ such that $w<v$ if $w$ is a basic commutator of weight less than $v$.  The basic commutators of weight $k$ are the elements $[w_1,w_2]$, where $w_1$ and $w_2$ are basic commutators of weight $n_1$ and $n_2$ respectively such that $n_1+n_2=k$, and the following are satisfied 1) $w_1>w_2$ and 2) if $w_1=[v_1,v_2]$, then $v_2\le w_2$.
\end{definition}
There is a formula, attributed to Witt, for the number of basic commutators of weight $k$.
\begin{theorem}[Witt]\label{witt}
Let $C(k,g)$ be the number of basic commutators of weight $k$ (for a free group of rank $g$).  Then
\[
C(k,g)=\frac{1}{k}\sum_{d\vert k}\mu(d)g^{k/d}
\]
where $\mu(d)$ is the M\"obius function defined by
\[
\mu(d):=\left\{
\begin{array}{cc}
1&\textrm{if $d$ is square-free with an even number of prime factors (including $d=1$),}\\
-1&\textrm{if $d$ is square-free with an odd number of prime factors,}\hfill\\
0&\textrm{if $d$ is not square-free.}\hfill
\end{array}
\right.
\]
\end{theorem}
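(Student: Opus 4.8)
The plan is to recognize $C(k,g)$ as the $\mathbb{Q}$-dimension of the degree-$k$ piece of a free Lie algebra, compute the Hilbert series of the corresponding free associative algebra in two ways by means of the Poincar\'e--Birkhoff--Witt (PBW) theorem, and then extract the formula by M\"obius inversion. The conceptual input is PBW; everything else is formal manipulation of power series.

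First I would let $L=\bigoplus_{k\ge1}L_k$ be the free Lie algebra over $\mathbb{Q}$ on $g$ generators, graded by weight. Base change identifies $L$ with $\Lie(F)\otimes_{\mathbb{Z}}\mathbb{Q}$, so by the fact cited above (that the basic commutators of weight $k$ descend to a $\mathbb{Z}$-basis of the free abelian group $\Lie_k(F)$) we get $\dim_{\mathbb{Q}}L_k=\mathrm{rank}_{\mathbb{Z}}\Lie_k(F)=C(k,g)$; hence it suffices to compute these dimensions. The universal enveloping algebra $U(L)$ is the free associative $\mathbb{Q}$-algebra on $g$ generators---equivalently the tensor algebra on a $g$-dimensional vector space placed in degree $1$---so its degree-$n$ part has dimension $g^n$, and its Hilbert series is $\sum_{n\ge0}g^nt^n=1/(1-gt)$.

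Next I would invoke PBW in the graded setting: $U(L)$ and the symmetric algebra $\mathrm{Sym}(L)=\bigotimes_{k\ge1}\mathrm{Sym}(L_k)$ are isomorphic as graded vector spaces. Since $L_k$ has dimension $C(k,g)$ and sits in degree $k$, the factor $\mathrm{Sym}(L_k)$ has Hilbert series $(1-t^k)^{-C(k,g)}$, and equating the two expressions for the Hilbert series of $U(L)$ gives
\[
\frac{1}{1-gt}=\prod_{k\ge1}\frac{1}{(1-t^k)^{C(k,g)}}.
\]
Taking $\log$ of both sides, using $-\log(1-x)=\sum_{m\ge1}x^m/m$, and comparing coefficients of $t^n$ yields $g^n=\sum_{k\mid n}k\,C(k,g)$. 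M\"obius inversion (in the form $G(n)=\sum_{d\mid n}F(d)\iff F(n)=\sum_{d\mid n}\mu(n/d)G(d)$, with $G(n)=g^n$ and $F(d)=d\,C(d,g)$) then gives $k\,C(k,g)=\sum_{d\mid k}\mu(k/d)g^{d}=\sum_{d\mid k}\mu(d)g^{k/d}$, which is the assertion.

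The only genuine subtlety is the appeal to PBW over $\mathbb{Q}$ together with the already-cited fact that each $\Lie_k(F)$ is free abelian of rank $C(k,g)$, which is exactly what licenses passing to $\mathbb{Q}$-coefficients without disturbing the count; the remainder is bookkeeping with formal power series and the standard M\"obius inversion lemma. If one wished to avoid PBW altogether, an alternative route is purely combinatorial: exhibit a bijection between basic commutators of weight $k$ and aperiodic necklaces of length $k$ in $g$ colors (via Lyndon words), and count the latter by observing that every length-$n$ word over a $g$-letter alphabet is a power of a unique aperiodic word, so $g^n=\sum_{k\mid n}k\,M_k$ with $M_k$ the number of aperiodic necklaces of length $k$, and finish with the same M\"obius inversion.
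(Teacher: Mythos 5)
Your proof is correct. A point worth noting first: the paper itself does not prove this statement; it simply cites Hall's \emph{The Theory of Groups}, p.~170, so there is no in-paper argument to compare against. That said, your argument is a complete and standard proof, and it does differ in flavor from Hall's. Hall's route is combinatorial, built on the collecting process: one shows every element of the free group on $g$ generators has a unique expression, modulo $F_{n+1}$, as an ordered product of powers of basic commutators of weight at most $n$, and one counts words accordingly to obtain the identity $g^n=\sum_{k\mid n}kC(k,g)$ (Hall's proof is essentially the Lyndon-word/necklace version you sketch at the end). You instead pass to $\mathbb{Q}$-coefficients, identify $\dim_{\mathbb{Q}}L_k=C(k,g)$ using the cited fact that basic commutators of weight $k$ form a $\mathbb{Z}$-basis of $\Lie_k(F)$, and deduce the Euler-product identity
\[
\frac{1}{1-gt}=\prod_{k\ge1}(1-t^k)^{-C(k,g)}
\]
from the PBW isomorphism $U(L)\cong\mathrm{Sym}(L)$ of graded vector spaces together with the fact that $U$ of the free Lie algebra is the tensor algebra. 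Both approaches finish identically with $\log$ and M\"obius inversion. What the PBW route buys you is brevity and clean conceptual packaging if one is willing to import PBW as a black box; what the combinatorial route buys is self-containedness and an argument valid over $\mathbb{Z}$ without any base change. Your appeal to the $\mathbb{Z}$-freeness of $\Lie_k(F)$ to justify $\dim_{\mathbb{Q}}L_k=C(k,g)$ is exactly the right hypothesis, and your M\"obius inversion step (with $G(n)=g^n$, $F(k)=kC(k,g)$, and the final change of variable $d\mapsto k/d$) is correctly carried out. No gaps.
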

\begin{proof}
See \cite{hall} p.\ 170.
\end{proof}

\section{Lie ring homomorphisms and functorality}

If $I$ is a Lie subring of $L$ and $[I,L]\subset I$, then $I$ is said to be a an ideal of $L$.  Note that because of skew-commutativity we could replace the condition $[I,L]\subset I$ with $[L,I]\subset I$.  Thus left ideals, right ideals and two sided ideals all coincide for Lie rings.  If $I$ is an ideal of $L$, then we define the factor Lie ring $L/I$ via the equivalence relation
\[
x\sim y\quad\textrm{if and only if}\quad x-y\in I, 
\]
and the bracket operation is well defined on $L/I$ as a consequence of the condition $[I,L]\subset I$.  The following facts are fundamental (see, for example, \cite{Khuk}).
\begin{proposition}\label{prop:Liebasics}
Suppose $I$ is an ideal of $L$.
\begin{itemize}
\item If $\phi:L\to L'$ is a Lie ring homomorphism with kernel $I$, then $\phi(L)\cong L/I$.
\item If $J$ and $I$ are ideals of $L$ with $I\subset J$, then $J/I$ is an ideal of $L/I$ and the following diagram of canonical maps is commutative:
\[
\begin{diagram}
L  \\
\dTo & \rdTo \\
     &       & L/I\\
     &       & \dTo\\
L/J  & \rBothto^\cong&\frac{L/I}{J/I}
\end{diagram}
\]
\end{itemize}
\end{proposition}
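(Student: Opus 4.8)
The plan is to run the standard isomorphism-theorem arguments, transported essentially verbatim from group theory (or the theory of associative rings) to Lie rings; the only point requiring genuine attention is that every map in sight must respect the bracket in addition to being additive.

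For the first bullet, I would define $\bar\phi\colon L/I\to\phi(L)$ by $\bar\phi(x+I):=\phi(x)$. First I would check that this is well-defined and additive: if $x+I=y+I$ then $x-y\in I=\ker\phi$, so $\phi(x)=\phi(y)$, and additivity is inherited directly from $\phi$. The map is surjective by construction and injective because $\bar\phi(x+I)=0$ forces $x\in\ker\phi=I$. The only Lie-theoretic step is bracket-compatibility: using that $\phi$ is a Lie ring homomorphism and that the bracket on $L/I$ is the one induced from $L$ (already noted above to be well-defined, since $[I,L]\subset I$), one has $\bar\phi([x+I,y+I])=\bar\phi([x,y]+I)=\phi([x,y])=[\phi(x),\phi(y)]=[\bar\phi(x+I),\bar\phi(y+I)]$. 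Hence $\bar\phi$ is an isomorphism of Lie rings and $\phi(L)\cong L/I$.

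For the second bullet, I would first verify that $J/I$ is an ideal of $L/I$. It is an additive subgroup and a Lie subring of $L/I$ because $J$ is an additive subgroup and Lie subring of $L$ containing $I$; and for the ideal condition, $[J/I,L/I]$ is generated by brackets $[x+I,y+I]=[x,y]+I$ with $x\in J$ and $y\in L$, each of which lies in $J/I$ since $J$ is an ideal of $L$. I would then apply the first bullet to the composite $\psi\colon L\to L/I\to(L/I)/(J/I)$ of the two canonical quotient maps. This $\psi$ is a surjective Lie ring homomorphism whose kernel is exactly $\{x\in L:x+I\in J/I\}=J$ (here one uses $I\subset J$), so the first bullet produces an isomorphism $L/J\cong(L/I)/(J/I)$. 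By its very construction this isomorphism carries $x+J$ to $(x+I)+J/I$, which is precisely the commutativity of the displayed square of canonical maps.

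The ``hard part'' is really only bookkeeping: ensuring at each stage that the bracket descends to the relevant quotient, which in every instance reduces to the defining property $[I,L]\subset I$ of an ideal. There are no estimates or constructions involved; the argument is entirely formal, and for the standard details one may refer to \cite{Khuk}.
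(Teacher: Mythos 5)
Your proof is correct and is the standard transport of the first and third isomorphism theorems from groups to Lie rings; the paper itself supplies no proof for this proposition, simply citing \cite{Khuk}, and your argument (including the careful note that the bracket descends because $[I,L]\subset I$) is precisely the one that reference would give.
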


\begin{lemma}
Given a group homomorphism $\phi:G\to G'$, there is an induced group homomorphism $\phi_*:\Lie_i(G)\to\Lie_i(G')$.
\end{lemma}
\begin{proof}
since $\phi(G_i)\subset G'_i$, we have a map $\phi:G_i\to G'_i$.  Since $\phi(G_{i+1})\subset G'_{i+1}$, it descends to a map $\phi_*:\Lie_i(G)\to\Lie_i(G')$.
\end{proof}

The homomorphism $\phi_*:\Lie_i(G)\to\Lie_i(G')$ extends uniquely to a Lie ring homomorphism $\phi_*:\Lie(G)\to\Lie(G')$.  Furthermore, if $N$ is the kernel of $\phi$, then let $N_{\bar i}$ be the filtration on $N$ defined by 
\[
N_{\bar i}:=N\cap G_i.
\]
Note that in general $N_{\bar i}\ne N_i$.  However, $N_{\bar i}$ is clearly a central filtration of $N$.
\begin{proposition}\label{prop:Liekernel}
Suppose that $\phi:G\twoheadrightarrow G'$ is an epimorphism.  Then
$\bar\Lie(N)$
is the kernel of the induced map $\phi_*:\Lie(G)\to\Lie(G')$.  In particular, $\Lie(G')\cong\Lie(G)/\bar\Lie(N)$.
\end{proposition}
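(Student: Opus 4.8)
The plan is to prove the two assertions in turn, with the bulk of the work going into identifying $\ker\phi_*$. First I record the elementary fact that surjectivity is inherited by the lower central series: since $G'$ is generated by $\phi(G)$ and each $G'_k$ is generated by iterated commutators of elements of $G'$, induction on $k$ gives $\phi(G_k)=G'_k$ for every $k$. Consequently each $\phi_*\colon\Lie_k(G)\to\Lie_k(G')$ is onto, hence so is $\phi_*\colon\Lie(G)\to\Lie(G')$. Since $\phi_*$ respects the grading, its kernel is a graded ideal, so $\ker\phi_*=\bigoplus_{k\ge1}\ker\bigl(\phi_*\colon\Lie_k(G)\to\Lie_k(G')\bigr)$ and it suffices to compute the kernel in each degree.

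Next I compute $\ker(\phi_*|_{\Lie_k(G)})$. For $x\in G_k$, the class $\bar x\in G_k/G_{k+1}$ lies in the kernel iff $\phi(x)\in G'_{k+1}=\phi(G_{k+1})$, i.e.\ iff $x=ny$ for some $n\in N$ and $y\in G_{k+1}$. Since $y\in G_{k+1}\subset G_k$, such an $n=xy^{-1}$ automatically lies in $N\cap G_k=N_{\bar k}$, so $\bar x\in\ker(\phi_*|_{\Lie_k(G)})$ exactly when $x\in N_{\bar k}\,G_{k+1}$. Thus $\ker(\phi_*|_{\Lie_k(G)})=(N_{\bar k}\,G_{k+1})/G_{k+1}$, which is precisely the image of the map $N_{\bar k}/N_{\overline{k+1}}\to G_k/G_{k+1}$ induced by the inclusion $N\hookrightarrow G$. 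This map is well defined and injective because $N_{\bar k}\cap G_{k+1}=N\cap G_{k+1}=N_{\overline{k+1}}$ (using $G_{k+1}\subset G_k$). Assembling over all $k$ identifies $\bar\Lie(N)$, via the natural injective Lie ring map induced by $N\hookrightarrow G$, with $\ker\phi_*$.

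Finally, applying the first isomorphism theorem for Lie rings (the first item of Proposition~\ref{prop:Liebasics}) to the surjection $\phi_*$ yields $\Lie(G')=\phi_*(\Lie(G))\cong\Lie(G)/\ker\phi_*=\Lie(G)/\bar\Lie(N)$.

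I expect the only real subtlety to be the claim $\phi(G_k)=G'_k$ for all $k$, which is where surjectivity of $\phi$ is used essentially; everything afterward is coset bookkeeping, the one point needing care being the identity $N_{\bar k}\cap G_{k+1}=N_{\overline{k+1}}$, which makes the natural map $\bar\Lie(N)\to\Lie(G)$ injective so that ``$\ker\phi_*$ equals $\bar\Lie(N)$'' holds on the nose rather than merely up to isomorphism.
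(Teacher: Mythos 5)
Your proof is correct and follows essentially the same route as the paper's: both arguments establish that $\phi$ carries $G_k$ onto $G'_k$ (you do this by an elementary induction, the paper by citing that lower-central terms are verbal subgroups), then compute the kernel of $\phi_*$ degree by degree via the same coset manipulation, and finally identify $\bar\Lie(N)$ as a Lie subring of $\Lie(G)$ using $N\cap G_{k+1}=N_{\overline{k+1}}$. Your write-up is somewhat more explicit about the injectivity of $\bar\Lie(N)\hookrightarrow\Lie(G)$ and about assembling the graded pieces, but the underlying argument coincides with the paper's.
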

\begin{proof}
$\bar\Lie(N)$ may be naturally identified with a Lie subring of $\Lie(G)$ since by definition
\[
\bar\Lie_i(N)=\frac{N_{\bar i}}{N_{\overline{i+1}}}=\frac{N_{\bar i}}{N\cap G_{i+1}}=\frac{N_{\bar i}}{G_{i+1}}\subset\Lie_i(G).
\]
Obviously $\ker(\phi:G_i\to G'_i)=N\cap G_i=N_{\bar i}$.  It follows that
$
\ker(\phi_*:\Lie_i(G)\to\Lie_i(G'))\subset\bar\Lie(N)
$.  Since $\phi$ is an epimorphism and since the terms of lower central series are \emph{verbal subgroups} (see \cite{MKS}) we have $\phi:G_i\twoheadrightarrow G'_i$ is an epimorphism.  Let $x\in G_i$.  Denote its image in $G_i/G_{i+1}$ by $\bar x$.  Suppose that $\phi_*(\bar x)=0$.  Thus, $\phi(x)\in G'_{i+1}$.  Hence, there is a $y\in G_{i+1}$, such that $\phi(y)=\phi(x)$.  Whence, $\phi(xy^{-1})=1$, which implies that $xy^{-1}$ is an element of $N_{\bar i}$, but since $\bar y=0$ in $G_i/G_{i+1}$, $\bar x$ is an element of $N_{\bar i}/N_{\overline{i+1}}$.
\end{proof}
If the condition that $\phi$ be an epimorphism is dropped from the hypothesis of Proposition~\ref{prop:Liekernel}, then the result fails to hold in general.  
\begin{ex}
Let $F$ be the free group generated by the symbols $x$ and $y$.  Let $\phi:F\to F$ be defined by
\[
\phi(x)=[[y,x],x]\quad\textrm{and}\quad\phi(y)=[[y,x],y].
\]
(Note that $[[y,x],x]$ and $[[y,x],y]$ are weight 3 basic commutators.)
One may check that though $\phi$ is injective, $\phi_*:\Lie(F)\to\Lie(F)$ is the zero map.
\end{ex}

\section{A result of Labute}
Let $G$ be a group given by a finite presentation.  That is, we are given a free group $F$ and a set of relators $\{r_1,\ldots,r_t\}\subset F$.  Let $R$ be the normal subgroup of $F$ normally generated by $\{r_1,\ldots,r_t\}$.  Thus, $G= F/R$.  Note that (assuming none of the relators is the identity) for each $i$ there is a largest $n(=n_i)$ such that $r_i\in F_n$ (since free groups are residually nilpotent).  We call $n$ the \emph{weight} of $r_i$.  Let $\rho_i$ be the image of $r_i$ in $\Lie_n(F)$ under the canonical map.  
Let $\mathfrak r$ be the Lie ring ideal of $\Lie(F)$ generated by $\{\rho_1,\ldots,\rho_t\}$.  In general $\Lie(F)/{\mathfrak r}\ne\Lie(G)$.  However, Labute proved:
\begin{theorem}[Labute]\label{thm:labute}
Under certain independence conditions on the relators $\{r_1,\ldots,r_t\}$, $\Lie(F)/{\mathfrak r}=\Lie(G)$.
\end{theorem}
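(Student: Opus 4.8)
The plan is to recast the statement using Proposition~\ref{prop:Liekernel}: applied to the epimorphism $F\twoheadrightarrow G$ with kernel $R$, it identifies $\Lie(G)$ with $\Lie(F)/\bar\Lie(R)$, where $\bar\Lie(R)$ is formed from the central filtration $R_{\bar i}=R\cap F_i$; since each $\rho_i\in\bar\Lie(R)$ and $\bar\Lie(R)$ is a Lie ideal of $\Lie(F)$, we automatically get an inclusion $\mathfrak r\subseteq\bar\Lie(R)$ and a surjection $\Lie(F)/\mathfrak r\twoheadrightarrow\Lie(G)$. So the theorem asserts that this surjection is injective, i.e.\ $\mathfrak r=\bar\Lie(R)$. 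The ``independence conditions'' should be read as the requirement that the sequence of initial forms $\rho_1,\dots,\rho_t$ be \emph{strongly free} (equivalently \emph{inert}, or that the presentation be \emph{mild}): the two-sided ideal $J$ they generate in the free associative ring $A:=\z\langle X\rangle$ is as free as possible, so that $A/J$ is torsion-free with Hilbert series $\bigl(1-g\,t+\sum_i t^{\,n_i}\bigr)^{-1}$. For one relator ($t=1$) this holds whenever $\rho_1\neq 0$, which is Magnus's classical computation; for $t\ge 2$ it is a genuine restriction.

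First I would translate everything to group rings. By Magnus--Witt theory, filtering $\z[F]$ by powers of its augmentation ideal $I_F$ gives a graded ring isomorphism $\mathrm{gr}(\z[F])\cong A$ under which $\Lie(F)$ is carried isomorphically onto the Lie subring of $A$ generated by $X$; moreover $A$ is the enveloping ring of $\Lie(F)$, and by the standard description of enveloping algebras of quotient Lie algebras $U(\Lie(F)/\mathfrak r)\otimes\mathbb Q\cong A/J\otimes\mathbb Q$. Pushing the filtration forward along $\z[F]\twoheadrightarrow\z[G]$ yields a surjection of graded rings $A/J\twoheadrightarrow\mathrm{gr}(\z[G])$ (the $\rho_i$ die in $\z[G]$, so the kernel contains $J$). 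The problem then splits into: (a) show this surjection is an isomorphism, so $\mathrm{gr}(\z[G])\cong A/J$; and (b) show the lower central series of $G$ coincides with its dimension subgroups $D_n(G)=G\cap(1+I_G^{\,n})$, so that $\Lie(G)$ is recovered as the Lie part of $\mathrm{gr}(\z[G])$. Granting (a) and (b): over $\mathbb Q$, Quillen's theorem together with the rational dimension-subgroup theorem identifies $\mathrm{gr}(\z[G])\otimes\mathbb Q\cong A/J\otimes\mathbb Q$ with $U(\Lie(G)\otimes\mathbb Q)$, and comparing Lie parts via the Poincar\'e--Birkhoff--Witt theorem gives $\Lie(G)\otimes\mathbb Q=\Lie(F)/\mathfrak r\otimes\mathbb Q$; torsion-freeness of $A/J$ (hence of $\Lie(F)/\mathfrak r$) then upgrades the surjection $\Lie(F)/\mathfrak r\twoheadrightarrow\Lie(G)$ from a rational to an integral isomorphism.

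The homological heart supplies (a) and (b). From the finite presentation one writes the Fox-derivative (Lyndon) chain complex $\z[G]^{\oplus t}\xrightarrow{\partial_2}\z[G]^{\oplus g}\xrightarrow{\partial_1}\z[G]\xrightarrow{\varepsilon}\z\to 0$, with $\partial_1$ given by $x_j\mapsto x_j-1$ and $\partial_2$ by the Fox derivatives of the $r_i$; this complex is exact when the presentation $2$-complex is aspherical (automatic for one relator unless it is a proper power, and covered for several relators by a small-cancellation-type hypothesis I would fold into the ``independence conditions''). Filtering each free module by the appropriate shift of the $I_G$-adic filtration and passing to associated graded produces a complex of free graded $\mathrm{gr}(\z[G])$-modules whose differentials are multiplication by the images of the $\rho_i$ and of the $x_j$. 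The content of strong freeness, in Anick's reformulation, is precisely that the analogous complex over $A/J$ is exact; combined with the exactness of the integral complex above, a filtration/spectral-sequence argument then forces $A/J\twoheadrightarrow\mathrm{gr}(\z[G])$ to be an isomorphism and, in low degrees, forces $D_n(G)=G_n$ for all $n$.

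I expect two points to be the real obstacles. The first is conceptual: isolating the correct form of the ``independence conditions''. Strong freeness is delicate and not transparently checkable, so one needs a version that is simultaneously strong enough for the spectral-sequence argument and verifiable for the particular relators that arise later (from the surface relator and the elements used to descend the Johnson filtration); establishing that those relators are strongly free is where most of the genuine work sits. The second is that the passage from ``$C_\bullet$ exact'' to ``$\mathrm{gr}(C_\bullet)$ exact'' is \emph{not} automatic — the spectral sequence of the filtered complex may carry higher differentials, which is exactly the phenomenon making $\Lie(F)/\mathfrak r\neq\Lie(G)$ in general — so ruling this out, equivalently proving the filtration is strict, equivalently proving $\mathrm{gr}(\z[G])$ picks up no relations beyond $J$, is the technical crux and is precisely where the strong-freeness hypothesis must be used. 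Carrying this out integrally, rather than only after tensoring with $\mathbb Q$, adds the further burden of controlling torsion in $\mathrm{gr}(\z[G])$.
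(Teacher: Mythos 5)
The paper does not prove this statement: it is quoted as a black box, with the independence conditions left unstated, and the reader is sent to Labute's papers~\cite{Labute1,Labute} together with the remark that Labute has verified the hypotheses in the two cases the thesis actually needs (the closed surface relator, and the relators $a_1,\dots,a_g$ for a handlebody). So there is nothing in the paper to compare against line by line. That said, your reconstruction correctly names the hidden hypothesis (strong freeness of the initial forms $\rho_1,\dots,\rho_t$, in the sense that the two-sided ideal they generate in the free associative ring $A=\z\langle X\rangle$ is ``as free as possible'', detected by the Hilbert series $\bigl(1-g\,t+\sum_i t^{n_i}\bigr)^{-1}$), the correct reduction via Proposition~\ref{prop:Liekernel} to showing $\mathfrak r=\bar\Lie(R)$, and the correct diagnosis that strictness of the induced filtration is where strong freeness earns its keep. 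In broad outline this is Labute's argument.

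There are, however, some tangles in the middle of your sketch. Step (b) as stated — pre-establishing the integral dimension subgroup equality $D_n(G)=G_n$ — is both unnecessary and the wrong way round: your own rational argument already uses only the Jennings--Hall rational dimension subgroup theorem, which is unconditional, and in Labute's actual proof the equality $D_n(G)=G_n$ is a \emph{consequence} of the conclusion (because $\Lie(G)\cong L/\mathfrak r$ comes out torsion-free), not an input. Trying to prove it first would run you into the general dimension subgroup problem, which is genuinely hard and false without extra hypotheses. Similarly, replacing the asphericity/Fox-calculus input by an ad hoc ``fold it into the independence conditions'' quietly changes the theorem; Labute's hypothesis is purely about the initial forms $\rho_i$, not about the $2$-complex, and his proof compares Hilbert series of $U(L/\mathfrak r)\cong A/J$ against $\mathrm{gr}(\z[G])$ directly rather than via a Lyndon resolution. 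The clean logical skeleton you want is: (i) the canonical surjection $L/\mathfrak r\twoheadrightarrow\Lie(G)$ from Proposition~\ref{prop:Liekernel}; (ii) strong freeness gives torsion-freeness and the Hilbert series of $L/\mathfrak r$; (iii) a rank count, using Quillen plus the rational dimension subgroup theorem, shows the surjection is a rational isomorphism degree by degree; (iv) torsion-free plus rationally iso forces an integral iso. Your sketch contains all of (i)--(iv), but step (b) and the asphericity detour should be excised.
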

We do not give the independence conditions in the hypothesis of Theorem~\ref{thm:labute} since Labute has checked that the conditions are satisfied in the two cases in which we are interested.  See \cite{Labute1} and \cite{Labute} for more details.

\section{Lie algebras}

Finally, we mention the definition of a Lie algebra.
\begin{definition}
A \emph{Lie algebra} over a field $\mathbb F$ is a Lie ring $A$ that is also a vector space over $\mathbb F$ and satisfies:
\begin{itemize}
\item Bilinearity over $\mathbb F$
\[
[rx+sy,z]=r[x,z]+s[y,z]\quad\textrm{and}\quad[x,ry+sz]=r[x,y]+s[x,z],
\]
\end{itemize}
for all $x,y,z\in A$ and all $r,s\in{\mathbb F}$.
\end{definition}

\begin{remark}\label{lem:liealg}
If $L$ is a Lie ring, then $L\otimes{\mathbb F}$ is a Lie algebra over $\mathbb F$ via:
\[
[\ell\otimes f,\ell'\otimes f']:=[\ell,\ell']\otimes ff'.
\]
\end{remark}

\end{chapter}

\begin{chapter}{The mapping class group and its filtrations}\label{sec:Jfilt}

This thesis aims to prove the existence of homeomorphisms that do not extend to any handlebody but whose action on the fundamental group of the surface is more subtle than in the examples of Section~\ref{sec:introexamples}.  We take the mapping class group of $\Sigma$ to be the group of orientation preserving homeomorphisms of $\Sigma$, up to isotopy.  We denote it by $\Mod(\Sigma)$.  For technical reasons we work with $\Mod(\Sigma,D)$, the group of orientation preserving homeomorphisms that fix $D$ pointwise, modulo isotopies that also fix $D$ pointwise.  However, note that whether or not a homeomorphism extends to any handlebody depends only on its mapping class in $\Mod(\Sigma)$, so we may work exclusively with $\Mod(\Sigma,D)$, and analogous results hold for $\Mod(\Sigma)$.

\section{The Johnson filtration}
Let
\[
N_k:=\frac{\pi_1(\Sigma)}{\pi_1(\Sigma)_k},\quad\textrm{and}\quad N_{1,k}:=\frac{\pi_1(\Sigma_1)}{\pi_1(\Sigma_1)_k}.
\]
To state the main problem of this thesis we give the following definition originally given by Johnson in \cite{J83}.

\begin{definition}
The Johnson filtration of $\Mod(\Sigma,D)$
\[
\cdots\subset\J(k)\subset\cdots\subset\J(3)\subset\J(2)\subset\J(1)=\Mod(\Sigma,D)
\]
is defined by 
\[
\J(k):=\{[f]\in\Mod(\Sigma,D)\vert f'_*:N_{1,k}\to N_{1,k} {\textrm{ is the identity map}}\},
\]
where $f'=f\vert_{\Sigma_1}$.
\end{definition}

Note that $\J(1)=\Mod(\Sigma,D)$, $\J(2)=\mathcal{I}$ is the Torelli group, and $\J(3)=\mathcal{K}$ is the Johnson subgroup.

\begin{definition}\label{defn:altfilt}
We define another filtration of $\Mod(\Sigma,D)$,
\[
\cdots\subset\J'(k)\subset\cdots\subset\J'(3)\subset\J'(2)\subset\J'(1)=\Mod(\Sigma,D),
\]
by
\[
\J'(k):=\{[f]\in\Mod(\Sigma,D)\vert f_*:N_k\to N_k {\textrm{ is the identity map}}\}.
\]
\end{definition}
It turns out that:
\begin{proposition}\label{prop:altfilt}
In general $\J(k)\subset\J'(k)$, but for $k\ge4$, $\J(k)\ne\J'(k)$.
\end{proposition}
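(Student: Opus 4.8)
The containment $\J(k) \subset \J'(k)$ is the easy direction. The inclusion $i : \Sigma_1 \hookrightarrow \Sigma$ induces a surjection $\pi_1(\Sigma_1) \twoheadrightarrow \pi_1(\Sigma)$ sending the free generators $a_i, b_i$ to generators satisfying the single surface relation $\prod [a_i, b_i] = 1$. Since the lower central series consists of verbal subgroups, this surjection descends to a compatible surjection $N_{1,k} \twoheadrightarrow N_k$ for every $k$. If $[f] \in \J(k)$, then $f$ fixes $D$ pointwise, so $f$ restricts to $f' = f|_{\Sigma_1}$, and the square relating $f'_*$ on $N_{1,k}$ and $f_*$ on $N_k$ commutes. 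Hence if $f'_* = \mathrm{id}$ on $N_{1,k}$, then $f_* = \mathrm{id}$ on $N_k$ by surjectivity, i.e. $[f] \in \J'(k)$. First I would write out this diagram chase carefully.

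The substance of the proposition is the strict inequality for $k \ge 4$, which requires producing, for each such $k$, a mapping class $[f] \in \J'(k) \setminus \J(k)$. The plan is to exploit precisely the difference between the two quotients: $N_k$ is a quotient of $N_{1,k}$ by the normal subgroup generated by the surface relator $\zeta := \prod_{i=1}^g [\tilde a_i, \tilde b_i]$, which is a nontrivial element of $\pi_1(\Sigma_1)_2$. So I want a homeomorphism $f$ of $\Sigma$, fixing $D$, such that $f'_*$ acts trivially on $\pi_1(\Sigma_1)/\pi_1(\Sigma_1)_k$ modulo the normal closure of $\zeta$, but \emph{not} trivially on $\pi_1(\Sigma_1)/\pi_1(\Sigma_1)_k$ itself. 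The natural candidate is a Dehn twist $T_\delta$ about the separating curve $\delta = \partial D$ (the boundary of the subsurface $\Sigma_1$), or more generally a power or product of such twists, or a twist about a separating curve that is a commutator of the right weight. The point is that $T_\delta$ acts on $\pi_1(\Sigma_1)$ by conjugation by $\zeta$ (up to conventions): $T_\delta{}_* (x) = \zeta x \zeta^{-1}$ for a generator $x$, so on $N_{1,k}$ this action is trivial iff $\zeta$ is central in $N_{1,k}$, which fails for $k \ge 4$ since $\zeta \in \pi_1(\Sigma_1)_2$ and $[\zeta, x]$ can be a nontrivial weight-$3$ element surviving in $\pi_1(\Sigma_1)_3/\pi_1(\Sigma_1)_4$. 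Meanwhile in $N_k$, $\zeta$ maps to the identity, so $f_*$ is the identity on $N_k$ for all $k$; in particular $[T_\delta] \in \J'(k)$ for all $k$, while $[T_\delta] \notin \J(k)$ for $k \ge 4$.

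To make this rigorous I would: (i) fix the exact formula for $(T_\delta)_*$ on $\pi_1(\Sigma_1)$ in the model of Section~\ref{sec:model} — this is the standard fact that twisting about a separating curve bounding a subsurface conjugates the subsurface's $\pi_1$ by the boundary word; (ii) observe that $\zeta \in \pi_1(\Sigma_1)_2 \setminus \pi_1(\Sigma_1)_3$ and, using the free Lie ring $\Lie(\pi_1(\Sigma_1)) = \Lie(F^{2g})$ from Section~\ref{sec:freeLierings}, exhibit a generator $x$ (say $x = \tilde a_1$) with $[\bar\zeta, \bar x] \ne 0$ in $\Lie_3(F^{2g})$ — this is a basic-commutator computation, easily nonzero by Witt's formula since $\Lie_3$ has rank $C(3,2g) > 0$ and $\bar\zeta = \sum_i [\bar{\tilde a_i}, \bar{\tilde b_i}]$ is a specific nonzero bracket; (iii) conclude that $(T_\delta)_*$ is nontrivial on $\pi_1(\Sigma_1)_3/\pi_1(\Sigma_1)_4$, hence on $N_{1,k}$ for $k \ge 4$, so $[T_\delta] \notin \J(k)$; (iv) note $[T_\delta] \in \J'(k)$ since $\zeta \mapsto 1$ in $\pi_1(\Sigma)$. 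The main obstacle is step (ii)–(iii): one must verify that $[\bar\zeta, \bar x]$ does not vanish in $\Lie_3$, which means being careful about the image of $\zeta$ in the Lie ring and not accidentally choosing $x$ so that the bracket collapses; this is where I would invoke the basic-commutator basis for $\Lie_3(F^{2g})$ explicitly. A secondary point is pinning down the twist formula with the right orientation conventions so that the action is genuinely conjugation by $\zeta$ and not by something that happens to be trivial mod $\pi_1(\Sigma_1)_k$.
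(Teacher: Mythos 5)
Your proof is essentially the paper's: the containment is shown by the same diagram chase using that lower-central-series terms are verbal subgroups, and strictness is shown by the same separating Dehn twist about a curve parallel to $\partial\Sigma_1$ (the paper's $c$), which is trivial on $\pi_1(\Sigma)$ but acts as conjugation by the boundary word on $\pi_1(\Sigma_1)$, nontrivially in $\pi_1(\Sigma_1)_3/\pi_1(\Sigma_1)_4$ by a basic-commutator computation in the free Lie ring. (Minor notational slip: in the paper $\delta$ denotes an arc, not $\partial D$, and the paper computes with the generator $b_g$ rather than $\tilde a_1$, but both choices make the resulting sum a sum of distinct basic commutators.)
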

\begin{proof}
We first show that $\J(k)\subset\J'(k)$.  Let $f:\Sigma\to\Sigma$ be a homeomorphism that fixes the disk $D$ pointwise and let $f':\Sigma_1\to\Sigma_1$ be its restriction.  Suppose that $[f]\in\J(k)$.  That is, $f'_*(y)y^{-1}\in \pi_1(\Sigma_1)_k$ for all $y\in\pi_1(\Sigma_1)$.  
We have the commutative diagram:
\[
\begin{diagram}
\pi_1(\Sigma_1)      & \rTo^{f'_*}  & \pi_1(\Sigma_1)            \\
\dTo_{i_*} &                  & \dTo_{i_*}             \\
\pi_1(\Sigma)     & \rTo^{f_*}   &\pi_1(\Sigma)
\end{diagram}
\]
Let $x\in\pi_1(\Sigma)$.  Select $x_1\in\pi_1(\Sigma_1)$ such that $i_*(x_1)=x$.  Then $f_*(x)=i_*\circ f'_*(x_1)$ and thus $f_*(x)x^{-1}=i_*\circ f'_*(x_1)i_*(x_1^{-1})=i_*(f'_*(x_1)x_1^{-1})$.  Since $f'_*(x_1)x_1^{-1}\in\pi_1(\Sigma_1)_k$, and since the terms of the lower central series are verbal subgroups $i_*(f'_*(x_1)x_1^{-1})\in\pi_1(\Sigma)$ so $[f]\in\J'(k)$.  For the second part we produce a homeomorphism that lies arbitrarily deep in the primed filtration, but is not in the fourth term of the Johnson filtration.    

Let $c$ be a simple closed curve in the interior of $\Sigma_1$, parallel to its boundary.
Let $t$ be the Dehn twist about $c$.  Note that $t_*$ acts as the identity on $\pi_1(\Sigma)$, as the curve $c$ is null-homotopic in $\Sigma$.  However, $[t]\ne1$ in the group $\Mod(\Sigma,D)$ as we show presently.  Consider the element $t_*(b_g)b_g^{-1}=[[a_1,b_1][a_2,b_2]\cdots[a_g,b_g],b_g]\in\pi_1(\Sigma_1)$.  Modulo $\pi_1(\Sigma_1)_3$, we have $[a_1,b_1][a_2,b_2]\cdots[a_g,b_g]\equiv\sum_i[\alpha_i,\beta_i]$.  Thus, modulo $\pi_1(\Sigma_1)_4$, $[[a_1,b_1][a_2,b_2]\cdots[a_g,b_g],b_g]\equiv\sum_i[[\alpha_i,\beta_i],\beta_g]$, which is non-zero in $\pi_1(\Sigma_1)_3/\pi_1(\Sigma_1)_4$, as the set of terms of this sum is linearly independent. (The terms $[[\alpha_i,\beta_i],\beta_g]$ are the images of basic commutators.  See Section~\ref{sec:freeLierings}.)  Therefore, $[t]\notin\J(4)$.
\end{proof}

In the literature a Johnson filtration of the group $\Mod(\Sigma)$ is also defined.
\begin{definition}
The Johnson filtration of the group $\Mod(\Sigma)$
\[
\cdots\subset\J_0(k)\subset\cdots\subset\J_0(3)\subset\J_0(2)\subset\J_0(1)=\Mod(\Sigma)
\]
is defined by setting $\J_0(k)$ to be the image of $\J(k)$ under the natural quotient $\Mod(\Sigma,D)\twoheadrightarrow\Mod(\Sigma)$.
\end{definition}
See, for example, \cite{survey}.  However, this filtration does not play a significant role in this work.

Now we may state the main problem that this thesis addresses:
\begin{problem}
Are there homeomorphisms $[f]\in\J(k)$ that do not extend to any handlebody?
\end{problem}

Note that the answer to the above question may in general depend on the positive integer $k$ and on the genus of the surface in question.

A partial result was given by Casson in 1979 and by Johannson and Johnson in 1980.  Neither of these works was published.  However, Leininger and Reid  \cite{LR} published an outline of Johannson and Johnson's proof.    The result is as follows.

\begin{theorem}
If the genus of $\Sigma$ is 2 or larger, then there exist homeomorphisms $[f]\in\I$ that do not extend to any handlebody and for
every odd integer $n$, $f^n$ does not extend to any handlebody.
\end{theorem}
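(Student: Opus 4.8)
The plan is to locate the obstruction one step deeper in the lower central series than the homological criterion of Example~\ref{general}. Since $[f]\in\I$ means $f_*$ is the identity on $H_1(\Sigma;\z)$, every $g$-dimensional subspace --- in particular every Lagrangian --- is automatically $f_*$-invariant, so Example~\ref{general} gives no information and one must pass to $\pi_1(\Sigma)/\pi_1(\Sigma)_3$. Recall that $f$ extends to \emph{some} handlebody precisely when it extends to $j:\Sigma\hookrightarrow\h$ for some handlebody bounded by $\Sigma$, and that if it does then $f_*$ preserves $K:=\ker(j_*:\pi_1(\Sigma)\to\pi_1(\h))$, hence preserves the image of $K$ in $\pi_1(\Sigma)/\pi_1(\Sigma)_3$. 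Writing the Johnson homomorphism $\tau:\I\to\wedge^3 H/H$ (with $H=H_1(\Sigma;\z)$) in terms of the action of $f_*$ on this two-step nilpotent quotient, the invariance of $K$ forces $\tau(f)$ into a proper subgroup $W_A\subset\wedge^3 H/H$ depending only on the Lagrangian $A:=\ker(j_*:H\to H_1(\h))$.

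First I would make this constraint explicit. Fixing a symplectic basis of $H$ adapted to $\h$, with meridians spanning $A$ and dual cycles spanning a complement $B\cong H_1(\h)$, the requirement that $f_*$ send the normal closure of the meridians into itself translates, modulo $\pi_1(\Sigma)_3$, into the statement that $\tau(f)$ has vanishing component in $A\wedge\wedge^2 B$ and in $\wedge^3 B$; that is, $W_A$ is the image of $\wedge^3 A\oplus(\wedge^2 A\wedge B)$, the classes ``supported on at least two meridian directions''. Next I would use that the Lagrangians realized by handlebodies are exactly the unimodular Lagrangian sublattices of $H$, which form a single $\mathrm{Sp}(2g,\z)$-orbit, together with the $\mathrm{Sp}$-equivariance of $\tau$, to get $W_A=\Phi\cdot W_{A_0}$ for a suitable $\Phi$; thus $[f]$ extends to no handlebody as soon as $\tau(f)\notin\mathrm{Sp}(2g,\z)\cdot W_{A_0}$. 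The substantive point here is the representation-theoretic (or counting) check that this orbit of subgroups does not exhaust $\wedge^3 H/H$ when $g\ge3$, after which one exhibits an explicit Torelli element --- for instance a suitable product of bounding-pair maps --- whose Johnson image is one of the uncovered ``generic'' classes, the verification being a finite basic-commutator computation of the kind in the proof of Proposition~\ref{prop:altfilt}.

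For genus $2$ this degenerates, since $\wedge^3 H/H=0$ and $\tau$ vanishes; there I would run the same scheme with the Birman--Craggs--Johnson homomorphism $\beta:\I\to\mathcal B$ into the $\z/2$-vector space of Boolean polynomials of degree at most $3$ on spin structures. Extending to $\h$ again confines $\beta(f)$ to a proper subspace $W'_A$ determined by $A$ (through the Rokhlin invariants of the Dehn surgeries compatible with $\h$); these subspaces form an $\mathrm{Sp}(4,\z/2)$-orbit that I would check is not all of $\mathcal B$, and one produces an explicit product of Dehn twists about separating curves landing outside it. In either regime the odd-power statement is then immediate: $\tau(f^n)=n\,\tau(f)$ and $\beta(f^n)=n\,\beta(f)=\beta(f)$ for $n$ odd --- the last identity being why ``$n$ odd'' is the natural hypothesis and, for the mod $2$ invariant, why it cannot be dropped --- and since $W_A$ (respectively $W'_A$) is a pure subgroup, a nonzero integer multiple of a class lying outside it lies outside it as well. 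Hence $f^n$ extends to no handlebody for every odd $n$.

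The hard part will be the constraint lemma together with the non-exhaustion statement: identifying the image of the handlebody subgroup under the Johnson and Birman--Craggs--Johnson homomorphisms and proving the resulting $\mathrm{Sp}$-orbit of subspaces is a proper subset of the target. Once that is in hand, producing a specific ``generic'' $f$ is a finite --- if somewhat tedious --- computation with basic commutators of the kind already performed above.
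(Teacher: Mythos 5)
The theorem you are proving is not proved in this thesis at all: the text attributes it to Casson and to Johannson--Johnson and simply refers to Leininger--Reid \cite{LR} for an outline. So there is no ``paper proof'' to compare against line by line; what I can do is measure your plan against the machinery this thesis develops in later chapters, which is exactly the $g\ge 3$ half of your plan.

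Your $g\ge3$ strategy --- $f$ extends to $\h$ implies $j_*\circ\tau[f]=0$, then show the union of the kernels $W_A$ over all handlebody Lagrangians $A$ fails to exhaust $\wedge^3H/H$, then exhibit a Torelli element outside the union --- is precisely Proposition~\ref{ker}, Corollary~\ref{robust}, and the vector-bundle/Morse--Sard argument of Chapter~\ref{handlebodiesandtau} specialized to $k=2$. But there are two concrete problems. First, your description of $W_A$ is wrong: in $\wedge^3H\subset H\otimes\wedge^2 H$, a decomposable $x\wedge y\wedge z$ already dies under $j_*:H\otimes\Lie_2\to H_1(\h)\otimes\bar\Lie_2$ as soon as \emph{one} of $x,y,z$ lies in $A$ (if $x\in A$, then $x\otimes(y\wedge z)$ dies on the left factor and $y\otimes(z\wedge x),\,z\otimes(x\wedge y)$ die on the right), so $W_A$ is the image of $\wedge^3A\oplus(\wedge^2A\wedge B)\oplus(A\wedge\wedge^2B)$, with quotient only $\wedge^3B$. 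That makes each $W_A$ much larger than you claim and makes the non-exhaustion step correspondingly harder. Second, and consequently, the ``counting check'' you defer is genuinely false for small genus by the paper's own computation: for $k=2$ one needs $g(g+1)/2<\binom{g}{3}$, which only holds for $g\ge7$ (Proposition~\ref{prop:tincreasing} and Figure~\ref{binary}). So the Johnson-homomorphism route cannot, by a dimension count, deliver the $g=3,\ldots,6$ cases of the theorem, and you have proposed nothing to close that gap.

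The way out, and almost certainly what Johannson--Johnson do, is the observation you already make in passing: the ``odd $n$'' clause is a fingerprint of a $\z/2$ obstruction. You should run the Birman--Craggs--Johnson/Rokhlin-invariant argument for \emph{all} $g\ge2$, not just $g=2$. For $g\ge3$ it is strictly weaker than $\tau$ (it factors through a $\z/2$-vector space), which is exactly why it only controls odd powers, but it has the enormous advantage that it does not run into the Lagrangian-Grassmannian dimension barrier. Your proposal as written leaves $g\in\{3,4,5,6\}$ unproved: the $\tau$-argument's counting step fails there (a failure your own later chapters quantify), and you have restricted BCJ to $g=2$.
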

See \cite{LR} for details.

\section{Statement of results}

The main result of this thesis is:

\begin{mainthm}
Given an integer $k\ge2$, there are homeomorphisms $[f]\in\J(k)-\J(k+1)$ that do not extend to any handlebody, provided that the genus of the surface is at least 7.  
Furthermore, $f^n$ does not extend to any handlebody for any integer $n\ne0$.
For $k\ge3$ the result also holds for genera 5 and 6.  For $k\ge4$ the result also holds for genus 4 and if $k\ge6$, then the result holds for genus 3.
\end{mainthm}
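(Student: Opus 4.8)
The plan is to convert the topological statement into one about the Johnson homomorphism and then to realize it. First I would record a necessary condition for extension. If $f$ extends to a handlebody $j:\Sigma\hookrightarrow\h$, its extension $F:\h\to\h$ satisfies $j_*\circ f_*=F_*\circ j_*$ on fundamental groups, so $f_*$, and hence $f'_*:=(f|_{\Sigma_1})_*$, preserves $\tilde K_\h:=\ker(\pi_1(\Sigma_1)\twoheadrightarrow\pi_1(\h))$. Since $\pi_1(\Sigma_1)$ is free of rank $2g$, $\pi_1(\h)$ is free of rank $g$, and the map is onto, Proposition~\ref{prop:Liekernel} identifies $\ker(\Lie(\pi_1(\Sigma_1))\twoheadrightarrow\Lie(\pi_1(\h)))$ with $\bar\Lie(\tilde K_\h)$, which (reducing to the standard handlebody by a symplectic automorphism of $\Sigma$) is the Lie ring ideal $\mathfrak{l}_L$ of $\Lie(\pi_1(\Sigma_1))$ generated by the Lagrangian summand $L:=\ker(H_1(\Sigma)\twoheadrightarrow H_1(\h))\subset H_1(\Sigma)$; if one prefers to work with the closed surface, Theorem~\ref{thm:labute} gives the analogous identification, since $\pi_1(\h)$ is presented from $\pi_1(\Sigma)$ by $g$ weight-one meridian relators together with the weight-two surface relator. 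Now assume in addition $[f]\in\J(k)$, so that the Johnson homomorphism $\tau_k(f)\in\Hom(H_1(\Sigma),\Lie_k(\pi_1(\Sigma_1)))$, $[y]\mapsto\overline{f'_*(y)y^{-1}}$, is defined. Taking $y$ to be a meridian of $\h$, so that $y\in\tilde K_\h$ and $f'_*(y)\in\tilde K_\h$, we get $f'_*(y)y^{-1}\in\tilde K_\h\cap\pi_1(\Sigma_1)_k$, and therefore $\tau_k(f)(L)\subseteq\mathfrak{l}_L\cap\Lie_k(\pi_1(\Sigma_1))$; equivalently the composite $L\hookrightarrow H_1(\Sigma)\xrightarrow{\tau_k(f)}\Lie_k(\pi_1(\Sigma_1))\twoheadrightarrow\Lie_k(\pi_1(\Sigma_1))/\mathfrak{l}_L$ is zero. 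Since every Lagrangian direct summand of $H_1(\Sigma)$ occurs as $\ker(H_1(\Sigma)\twoheadrightarrow H_1(\h))$ for some handlebody $\h$ (extend a basis of $L$ to a symplectic basis and apply surjectivity of $\Mod(\Sigma,D)\to Sp(2g,\z)$ to the standard handlebody), it suffices to produce $[f]\in\J(k)$ whose Johnson homomorphism is \emph{incompatible with every Lagrangian}: for each Lagrangian summand $L$ there is $\lambda\in L$ with $\tau_k(f)(\lambda)\notin\mathfrak{l}_L$. Any such $f$ then lies in $\J(k)-\J(k+1)$, since $\tau_k(f)\ne0$.

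The remaining task is realization, for which I expect to need a dimension count together with an explicit construction. The image of $\tau_k$ lies in a distinguished $Sp(2g,\z)$-submodule $D_k\subseteq\Hom(H_1(\Sigma),\Lie_k(\pi_1(\Sigma_1)))$, and for a fixed Lagrangian $L$ the elements of $D_k$ compatible with $L$ form a proper submodule, whose codimension can be computed from the decomposition of $\Lie_k(\pi_1(\Sigma_1))$ induced by a splitting $H_1(\Sigma)=L\oplus E$ into complementary Lagrangians. When $k=2$, $D_2\cong\Lambda^3H_1(\Sigma)$ and this codimension is $\binom{g}{3}$ (the component of $\tau_2(f)$ in $\Lambda^3E$ must vanish); since the Lagrangian Grassmannian has dimension $\binom{g+1}{2}$, a generic class in $D_2$ is incompatible with every $L$ exactly when $\binom{g}{3}>\binom{g+1}{2}$, i.e. $g^2-6g-1>0$, i.e. $g\ge7$, and such an integral class is realized by Johnson's theorem that $\tau_2$ maps $\J(2)=\I$ onto $\Lambda^3H_1(\Sigma)$. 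For $k\ge3$ an analogous but more delicate count inside $\Lie_k(\pi_1(\Sigma_1))$ yields the improved thresholds $g\ge5$ for $k\ge3$, $g\ge4$ for $k\ge4$, and $g\ge3$ for $k\ge6$; here, since $\tau_k$ need not be onto $D_k$, I would build $f$ explicitly as a suitable iterated commutator of Dehn twists along prescribed curves in the model of Section~\ref{sec:model}, compute $\tau_k(f)$ directly, and choose the curves so that the resulting (highly non-generic) class still satisfies the incompatibility condition.

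The statement about powers is then immediate: $\tau_k$ is a homomorphism into a torsion-free abelian group, so $\tau_k(f^n)=n\,\tau_k(f)$, and since $\Lie_k(\pi_1(\Sigma_1))/\mathfrak{l}_L$ is free abelian, $n\,\tau_k(f)(\lambda)\notin\mathfrak{l}_L$ for every nonzero $n$ whenever $\tau_k(f)(\lambda)\notin\mathfrak{l}_L$; hence $f^n$ is again incompatible with every Lagrangian, extends to no handlebody, and lies in $\J(k)-\J(k+1)$. I expect the genuine difficulty to be the incompatibility verification itself---excluding \emph{every} Lagrangian summand at once with the sharp genus bound, for a \emph{concretely realized} mapping class rather than a generic element of $D_k$. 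The dimension count above only produces good generic classes; for $k\ge3$, where the image of $\tau_k$ is a proper submodule of $D_k$, one must exhibit an explicit $f$ and argue that the specific, non-generic class $\tau_k(f)$ avoids all of the compatibility subvarieties simultaneously, which is what dictates the careful re-choice of model and of the defining curves of $f$ in Section~\ref{robusthomeos}.
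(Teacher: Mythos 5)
Your first step—the necessary condition for extension—is a genuinely different derivation from the paper's, and it is simpler.  The paper proves Proposition~\ref{ker} by a cobordism argument: one passes through the Morita--Heap refinement $\bar\tau_k$, realizes $T_f=\partial T_F$, and concludes from the vanishing of $I_*[T_f]$ in $H_3(T_F)$.  You instead observe directly that an extension $F$ forces $f'_*$ to preserve $\tilde K_\h$, pick a lift $y\in\tilde K_\h$ of $\lambda\in L$, and use Proposition~\ref{prop:Liekernel} (plus Labute for the explicit description $\mathfrak l_L$) to locate $\overline{f'_*(y)y^{-1}}$ in the kernel of $j_*$.  This is correct and cleanly equivalent to $j_*\circ\tau_k[f]=0$; it buys you a shorter argument at the cost of none of the generality, and it avoids the entire machinery of Chapter~4.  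The closing observation about powers is also right and matches the paper's.

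The gap is in the realization step for $k\ge3$.  Your dimension count is correct only for $k=2$, and you correctly identify why: for $k\ge3$ the image of $\tau_k$ is a proper (and not explicitly understood) submodule of $D_k$, so producing a \emph{generic} element of $D_k$ that is incompatible with every Lagrangian does not produce a mapping class.  But your fallback—building $f$ as an iterated Dehn-twist commutator and verifying, by direct computation, that $\tau_k(f)$ avoids the compatibility locus of \emph{every} Lagrangian simultaneously—is not carried out, is not what the paper does, and is very unlikely to be tractable: there is no obvious finite or local criterion certifying incompatibility with all of $\mathbb L$.  The paper instead makes the dimension count work for all $k$ by three ingredients you are missing.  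First, it phrases the count entirely inside $\im(\tau_k)^\R$: the bundle $E$ over $\mathbb L$ has fiber $\mathbb K_K=\ker\big(j_*\colon\im(\tau_k)^\R\to H_1(\h,\R)\otimes\bar\Lie_k\big)$, so the inequality to prove reduces (by rank--nullity) to $g(g+1)/2<\dim\im(j_*\circ\tau_k)^\R$, with no reference to $D_k$.  Second, for $E$ to be a vector bundle one must know the fiber dimension is constant in $K$; this is where Morita's Lie ring structure on $\im(\tau)$ (Theorem~\ref{thm:tauisalg}) and Hain's degree-two generation (Theorem~\ref{thm:taugen}) enter, yielding a grading of $\im(\tau_k)^\R$ compatible with any symplectic splitting (Proposition~\ref{prop:taugrading}).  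Third—and this is the key quantitative input you do not have—the paper bounds $\dim\im(j_*\circ\tau_k)^\R$ from below via the pure braid group: Oda's embedding $\theta:\p_g\hookrightarrow\Mod(\Sigma,D)$ and Levine's Lemma~\ref{levine} show that the image of $(\p_g)_k/(\p_g)_{k+1}$ under $j_*\circ\tau_k\circ\rho$ is injective, and the Falk--Randell decomposition \eqref{eq:purebraidquotient} of the pure braid lower central quotients together with Witt's formula gives $\sum_{m=3}^g C(k,m-1)\le\dim\im(j_*\circ\tau_k)^\R$.  That polynomial grows with degree $k$ in $g$, which is what produces the improved genus thresholds $g\ge5$ for $k\ge3$, $g\ge4$ for $k\ge4$, and $g\ge3$ for $k\ge6$.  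Without this estimate, the inequality $g(g+1)/2<\dim\im(j_*\circ\tau_k)^\R$ is unverified for $k\ge3$ and the theorem is not proved.
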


Very similar results have been obtained independently by Richard Hain \cite{Hain} using different methods.  We have immediately:

\begin{maincor}
If $\Sigma$ is a closed, orientable surface of genus 3 or greater, then there are homeomorphisms of $\Sigma$ that do not extend to any handlebody, lying arbitrarily deep in the Johnson filtration.
\end{maincor}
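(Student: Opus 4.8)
The plan is to read the Corollary off of Theorem~\ref{main} directly; it is a purely formal consequence. Fix a genus $g\ge3$. Scanning the cases listed in Theorem~\ref{main}, there is a threshold integer $k_0=k_0(g)$ --- one may take $k_0=2$ when $g\ge7$, $k_0=3$ when $g=5,6$, $k_0=4$ when $g=4$, and $k_0=6$ when $g=3$ --- with the property that for every $k\ge k_0$ there is a homeomorphism $[f]\in\J(k)-\J(k+1)$ of the genus $g$ surface $\Sigma$ that does not extend to any handlebody.

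Next I would recall that the Johnson filtration is a \emph{decreasing} filtration, $\cdots\subset\J(k+1)\subset\J(k)\subset\cdots\subset\J(1)=\Mod(\Sigma,D)$, so that ``lying arbitrarily deep in the Johnson filtration'' means exactly: for every positive integer $n$ there is a homeomorphism in $\J(n)$ that does not extend to any handlebody. To produce such a homeomorphism, given $n$, set $k=\max(n,k_0)$ and take $[f]\in\J(k)-\J(k+1)$ supplied by Theorem~\ref{main}. Since $k\ge n$ we have $\J(k)\subset\J(n)$, hence $[f]\in\J(n)$, and by construction $[f]$ does not extend to any handlebody. As $n$ was arbitrary, this proves the Corollary.

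There is essentially no obstacle here: the entire content is carried by Theorem~\ref{main}, and the only point to verify is the elementary inclusion $\J(k)\subset\J(n)$ for $k\ge n$, which is immediate from the definition of the filtration. (If one wanted the corresponding statement phrased for $\Mod(\Sigma)$ rather than $\Mod(\Sigma,D)$, one would additionally push the class forward along the quotient $\Mod(\Sigma,D)\twoheadrightarrow\Mod(\Sigma)$, using that extendability to a handlebody depends only on the mapping class in $\Mod(\Sigma)$, as noted at the start of Chapter~\ref{sec:Jfilt}.)
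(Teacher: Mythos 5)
Your proposal is correct and takes essentially the same approach as the paper, which simply asserts ``It follows that:'' and gives no separate proof; your argument --- using the decreasing inclusions $\J(k)\subset\J(n)$ for $k\ge n$ together with the per-genus threshold $k_0(g)$ read off from Theorem~\ref{main} --- is exactly the intended formal deduction.
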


The primary tool that we use in proving Theorem~\ref{main} is the $k^{\textrm{th}}$ Johnson homomorphism\footnote{Technically the homomorphism denoted by $\tau_{1,k}$ is called the Johnson homomorphism.  However, $\tau_{k}$ and $\tau_{1,k}$ are so closely related that we will  refer to each of them as the Johnson homomorphisms.} $\tau_k$, which is defined in Section~\ref{johnsonhom}.
The proof of Theorem~\ref{main} comes in two main parts.  First, we show that if $f$ is a homeomorphism that extends to a given handlebody $j:\Sigma\hookrightarrow\h$ bounded by $\Sigma$ then $j_*\circ\tau_k[f]=0$.  Chapter~\ref{johnsonhom} and Section~\ref{sec:behavior} are devoted to proving this first part.  Second, we show that there exist homeomorphisms $f$ with $j_*\circ\tau_k[f]\ne0$ for all handlebodies $j:\Sigma\hookrightarrow\h$ bounded by $\Sigma$.  We call such homeomorphisms \emph{robust}.  Proof of this second part is the subject of the remainder of Chapter~\ref{handlebodiesandtau}.
\end{chapter}

\begin{chapter}{The Johnson and Morita-Heap homomorphisms}\label{johnsonhom}

\section{The Johnson homomorphisms}\label{sec:johnsonhom}

Dennis Johnson defined a homomorphism in \cite{J80} from the Torelli group
$\I$ onto a 
free abelian group.  In his survey \cite{J83}, Johnson defines a 
sequence of homomorphisms, $\tau_{1,k}$, on the terms of the Johnson filtration.   In this section we first introduce a homomorphism $\tau_k$ closely related to $\tau_{1,k}$.  Next we will define $\tau_{1,k}$ itself. We will use $\tau_k$ to detect the existence of homeomorphisms that do not extend to any handlebody.   For technical reasons we are forced to use both of these homomorphisms.  In the next section we elaborate the relationship between the two.  

Let $\Lie_{(1)}:=\Lie(\Sigma_1)$ and $\Lie_{1,k}:=\Lie_k(\Sigma_1)$.  Since $\pi_1(\Sigma_1)$ is free, $\Lie_{(1)}$ is a free Lie ring generated by $\Lie_{1,1}=H_1(\Sigma_1)$.  We also define $\Lie:=\Lie(\Sigma)$ and $\Lie_k:=\Lie_k(\Sigma)$.  
Let $\mathfrak I\subset\Lie_{(1)}$ be the Lie ring ideal generated by the \emph{symplectic class} $\Omega:=[\alpha_1,\beta_1]+[\alpha_2,\beta_2]+\cdots+[\alpha_g,\beta_g]$.  We have:
\begin{lemma}\label{labute1}
\[
\Lie=\frac{\Lie_{(1)}}{\mathfrak I}
\]
and the canonical epimorphism $\Lie_{(1)}\to\Lie_{(1)}/{\mathfrak I}=\Lie$ is the inclusion induced map. 
\end{lemma}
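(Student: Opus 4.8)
The plan is to realize $\Lie$ as the associated graded Lie ring of a group quotient, and then apply Labute's theorem (Theorem~\ref{thm:labute}) together with Proposition~\ref{prop:Liekernel}. Recall from Section~\ref{sec:model} that $\pi_1(\Sigma)$ has the one-relator presentation $\langle a_1,b_1,\ldots,a_g,b_g \mid [a_1,b_1][a_2,b_2]\cdots[a_g,b_g]\rangle$, with $\pi_1(\Sigma_1)$ free on the same generators. Write $F:=\pi_1(\Sigma_1)$, let $r:=[a_1,b_1]\cdots[a_g,b_g]$ be the single relator, and let $R$ be its normal closure, so $\pi_1(\Sigma)=F/R$ and the quotient map $q:F\twoheadrightarrow F/R=\pi_1(\Sigma)$ is exactly the map inducing $i_*$ on $H_1$ (identifying $H_1(\Sigma_1)\cong H_1(\Sigma)$). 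First I would observe that $r\in F_2$ but $r\notin F_3$ (its image in $\Lie_2(F)=\Lie_{1,2}$ is precisely $\Omega=\sum_i[\alpha_i,\beta_i]$, which is nonzero since the $[\alpha_i,\beta_i]$ are images of basic commutators), so $r$ has weight $2$ and $\rho:=\overline{r}=\Omega$. Thus the Lie ideal $\mathfrak r$ of $\Lie(F)=\Lie_{(1)}$ generated by $\rho$ is exactly $\mathfrak I$.

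Next I would invoke Labute's theorem in the single-relator surface-group case. As the excerpt explicitly states, Labute has verified that the independence conditions of Theorem~\ref{thm:labute} hold in precisely the two cases of interest here — and the surface group presentation is one of them (this is the classical computation from \cite{Labute1} that $\Lie(\pi_1\Sigma) = \Lie(F)/\mathfrak r$). Hence $\Lie(F)/\mathfrak I=\Lie(\pi_1(\Sigma))=\Lie$, which gives the displayed isomorphism. For the second assertion — that the canonical epimorphism $\Lie_{(1)}\to\Lie_{(1)}/\mathfrak I=\Lie$ is the map induced by the inclusion $i:\Sigma_1\hookrightarrow\Sigma$ — I would apply Proposition~\ref{prop:Liekernel} to the epimorphism $q:F\twoheadrightarrow\pi_1(\Sigma)$. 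That proposition says $q_*:\Lie(F)\to\Lie(\pi_1(\Sigma))$ is surjective with kernel $\bar\Lie(R)$ for the induced filtration $R_{\bar i}=R\cap F_i$, so $\Lie(\pi_1\Sigma)\cong\Lie(F)/\bar\Lie(R)$ and $q_*$ is the corresponding quotient map. It then remains to identify $\bar\Lie(R)$ with $\mathfrak I$: the containment $\mathfrak I\subset\bar\Lie(R)$ is immediate since $r\in R$ forces $\rho=\Omega$ into $\bar\Lie_2(R)$ and $\bar\Lie(R)$ is a Lie ideal; the reverse containment $\bar\Lie(R)\subset\mathfrak I$ follows by comparing with Labute's identification (both $\Lie(F)/\mathfrak I$ and $\Lie(F)/\bar\Lie(R)$ equal $\Lie$ with the same surjection $\Lie(F)\to\Lie$, since $i_*=q_*$). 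Chasing the commutative triangle of Proposition~\ref{prop:Liebasics} then shows the canonical epimorphism $\Lie_{(1)}\to\Lie_{(1)}/\mathfrak I$ is $i_*$.

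The only genuine subtlety — and thus the main obstacle — is the verification that $\bar\Lie(R)$ coincides with the \emph{a priori} smaller ideal $\mathfrak I$; equivalently, that the single relator $r$ is ``Lie-ring inert'' in the sense that no higher-weight relations appear in $\bar\Lie(R)$ beyond those forced by $\Omega$ as a Lie element. This is exactly the content of Labute's independence hypothesis for the surface relator, and since the excerpt permits us to cite that Labute has checked it (\cite{Labute1}), the obstacle is dispatched by reference rather than by computation. Everything else is diagram-chasing with Propositions~\ref{prop:Liebasics} and~\ref{prop:Liekernel}.
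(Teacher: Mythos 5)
Your proof is correct and follows the same route as the paper, which simply cites Theorem~\ref{thm:labute} and notes that Labute verified the independence hypothesis for the surface relator in \cite{Labute1}. You have spelled out the supporting details (identifying the weight-$2$ relator, its image $\Omega$ in $\Lie_{1,2}$, and the diagram chase via Proposition~\ref{prop:Liekernel} showing the canonical quotient map agrees with $i_*$) that the paper's two-sentence proof leaves implicit.
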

\begin{proof}
This is an application of Theorem~\ref{thm:labute}.  The hypothesis is checked in \cite{Labute1}.
\end{proof} 

For each $[f]\in\J(k)$ there is a homomorphism from $H_1(\Sigma)$ to $\Lie_k$ given by $[x]\mapsto[f_*(x)x^{-1}]$ (where $x$ is an element of $\pi_1(\Sigma)$).  Note that $f_*(x)x^{-1}\in\pi_1(\Sigma)_k$ since $[f]\in\J(k)\subset\J'(k)$.
We may define a homomorphism $\sigma:\J(k)\to \Hom(H_1(\Sigma),\Lie_k)$ by
\[
[f] \rMapsto ([x]\mapsto[f_*(x)x^{-1}]).
\]
This is almost the homomorphism that we want.  The reader is referred to \cite{J80} and \cite{J83} for proof that the maps $H_1(\Sigma)\to\Lie_k$ and $\sigma:\J(k)\to \Hom(H_1(\Sigma),\Lie_k)$ are well defined and in fact homomorphisms.  To get the homomorphism $\tau_k$ we note that the symplectic form $\omega$ on $H_1(\Sigma)$ gives a canonical homomorphism $\eta:H_1(\Sigma)\otimes\Lie_k\to\Hom(H_1(\Sigma),\Lie_k)$, defined by
\[
\eta(h\otimes l):=([x] \mapsto \omega(h,[x])\cdot l).
\]
\begin{lemma}
$\eta$ is an isomorphism. 
\end{lemma}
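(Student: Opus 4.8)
The plan is to show that $\eta\colon H_1(\Sigma)\otimes\Lie_k\to\Hom(H_1(\Sigma),\Lie_k)$ is an isomorphism by exhibiting an explicit inverse, using the fact that the symplectic form $\omega$ is nondegenerate. First I would fix the symplectic basis $\{\alpha_1,\dots,\alpha_g,\beta_1,\dots,\beta_g\}$ of $H_1(\Sigma)$ provided by the model in Section~\ref{sec:model}, so that $\omega(\alpha_i,\beta_j)=\delta_{ij}$, $\omega(\alpha_i,\alpha_j)=\omega(\beta_i,\beta_j)=0$. The key observation is that nondegeneracy of $\omega$ means the map $h\mapsto\omega(h,-)$ identifies $H_1(\Sigma)$ with its dual $\Hom(H_1(\Sigma),\z)$; concretely, the dual basis to $\{\alpha_i,\beta_i\}$ under this identification is, up to sign, $\{\beta_i,-\alpha_i\}$ (or $\{-\beta_i,\alpha_i\}$ depending on orientation conventions). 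Since $H_1(\Sigma)$ is free of rank $2g$ and $\Lie_k$ is a (finitely generated) abelian group, there is a canonical isomorphism $\Hom(H_1(\Sigma),\z)\otimes\Lie_k\xrightarrow{\ \sim\ }\Hom(H_1(\Sigma),\Lie_k)$, and composing with $\omega$ gives exactly $\eta$.

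Concretely I would carry out the following steps. First, verify that $\eta$ is additive in $h$ and in $l$ and hence is a well-defined homomorphism on the tensor product $H_1(\Sigma)\otimes\Lie_k$ (this is immediate from bilinearity of $\omega$ and biadditivity of scalar multiplication $\Lie_k\times\z\to\Lie_k$). Second, define a candidate inverse $\eta^{-1}\colon\Hom(H_1(\Sigma),\Lie_k)\to H_1(\Sigma)\otimes\Lie_k$ by
\[
\eta^{-1}(\phi):=\sum_{i=1}^g\bigl(\beta_i\otimes\phi(\alpha_i)-\alpha_i\otimes\phi(\beta_i)\bigr),
\]
which makes sense because $\Hom(H_1(\Sigma),\Lie_k)$ is a finitely generated abelian group and $\phi$ is determined by its values on the basis. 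Third, check the two composites are identities. For $\eta\circ\eta^{-1}$: applying $\eta(\eta^{-1}(\phi))$ to a basis element $\alpha_j$ gives $\sum_i\bigl(\omega(\beta_i,\alpha_j)\phi(\alpha_i)-\omega(\alpha_i,\alpha_j)\phi(\beta_i)\bigr)=\phi(\alpha_j)$ using $\omega(\beta_i,\alpha_j)=-\delta_{ij}$ and $\omega(\alpha_i,\alpha_j)=0$; similarly evaluating on $\beta_j$ recovers $\phi(\beta_j)$, and since a homomorphism out of $H_1(\Sigma)$ is determined by its values on a basis, $\eta\circ\eta^{-1}=\mathrm{id}$. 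For $\eta^{-1}\circ\eta$: it suffices to check on simple tensors $h\otimes l$; writing $h=\sum_i(p_i\alpha_i+q_i\beta_i)$ one computes $\eta(h\otimes l)(\alpha_j)=\omega(h,\alpha_j)\,l=-q_j\, l$ and $\eta(h\otimes l)(\beta_j)=p_j\, l$, so
\[
\eta^{-1}(\eta(h\otimes l))=\sum_j\bigl(\beta_j\otimes(-q_j l)-\alpha_j\otimes(p_j l)\bigr)=-\Bigl(\sum_j(p_j\alpha_j+q_j\beta_j)\Bigr)\otimes l=-h\otimes l,
\]
which reveals a sign I would absorb by adjusting the formula for $\eta^{-1}$ (replacing it by its negative), after which $\eta^{-1}\circ\eta=\mathrm{id}$ as well.

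There is essentially no hard part here: the statement is a formal consequence of the nondegeneracy of $\omega$ together with the freeness of $H_1(\Sigma)$ as an abelian group, which guarantees that tensoring the perfect pairing with $\Lie_k$ stays perfect. The only thing to be careful about is bookkeeping of signs and orientation conventions in the definition of $\omega$ from Section~\ref{sec:model} (the right-hand rule, $\alpha_i\cdot\beta_i=1$), so that the explicit inverse is written with the correct signs; this is routine and I would simply note that the final formula is fixed by consistency with these conventions. An alternative, even shorter argument I could give is to observe that both source and target are finitely generated abelian groups, that $\eta$ is surjective (any $\phi$ is hit, as the computation above shows), and that source and target have the same structure because $H_1(\Sigma)\cong\Hom(H_1(\Sigma),\z)$ via $\omega$, so that $\eta$ is a surjection between abelian groups that is an isomorphism after this identification; but the explicit-inverse approach is cleaner and self-contained.
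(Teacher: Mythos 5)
Your proof is correct and takes essentially the same approach as the paper: both arguments hinge on evaluating against the symplectic basis $\{\alpha_i,\beta_i\}$ and using the nondegeneracy relations $\omega(\alpha_i,\beta_j)=\delta_{ij}$, $\omega(\beta_i,\alpha_j)=-\delta_{ij}$; the paper checks injectivity and surjectivity separately, while you package the same computation as an explicit two-sided inverse, and the paper's surjectivity formula $x=\sum_i(\alpha_i\otimes\kappa(\beta_i)-\beta_i\otimes\kappa(\alpha_i))$ is exactly your sign-corrected inverse. One bookkeeping slip to note: with your original candidate $\eta^{-1}(\phi)=\sum_i(\beta_i\otimes\phi(\alpha_i)-\alpha_i\otimes\phi(\beta_i))$ the first composite also gives $\eta(\eta^{-1}(\phi))(\alpha_j)=\sum_i\omega(\beta_i,\alpha_j)\phi(\alpha_i)=-\phi(\alpha_j)$, not $\phi(\alpha_j)$ as written, so $\eta\circ\eta^{-1}=-\mathrm{id}$ just as $\eta^{-1}\circ\eta=-\mathrm{id}$; this is consistent with your second computation and disappears once you negate the formula as you propose.
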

\begin{proof}
First $\eta$ is onto.  To show this, consider an arbitrary homomorphism $\kappa\in\Hom(H_1(\Sigma),\Lie_k)$.  $\kappa$ is uniquely determined by its values on a basis.  Let $\ell_i:=\kappa(\alpha_i)$, $\ell_i':=\kappa(\beta_i)$.  If we let
\[
x:=\sum_i\left(\alpha_i\otimes \ell_{i}'-\beta_i\otimes \ell_{i}\right),
\]
then it is clear that $\eta(x)=\kappa$.  We show that $\eta$ is one-to-one.  Let 
\[
x=\sum_i\left(\alpha_i\otimes l_{i}+\beta_i\otimes l_{i}'\right)  
\]
be an arbitrary element of $H_1(\Sigma)\otimes\Lie_k$, such that $\eta(x)=0$.  We show that $x=0$.  We compute:
\[
0=\eta(x)(\alpha_k)=\sum_i\left(\omega(\alpha_i,\alpha_k)l_{i}+\omega(\beta_i,\alpha_k)l_i'\right)=-l_k'.
\]
Thus $l_k'$ is zero for all $k$ and similarly for $l_k$.
\end{proof}

Now we are ready for our homomorphism.
\begin{definition}
The Johnson homomorphism $\tau_k:\J(k)\to H_1(\Sigma)\otimes\Lie_k$ is defined to be $\tau_k:=\eta^{-1}\circ\sigma$.
\end{definition}

The $k^{\textrm{th}}$ ($k\ge2$) Johnson 
homomorphism $\tau_{1,k}:\J(k)\to H_1(\Sigma_1)\otimes\Lie_{1,k}$ is defined very similarly to above.  In particular we define $\sigma':\J(k)\to \Hom(H_1(\Sigma_1),\Lie_{1,k})$ by
\[
[f] \rMapsto ([x]\mapsto[f'_*(x)x^{-1}]),\quad\textrm{(where }f'=f\vert\Sigma_1).
\]
There is a canonical isomorphism
\[
\eta':H_1(\Sigma_1)\otimes\Lie_{1,k}\to\Hom(H_1(\Sigma_1),\Lie_{1,k}).
\]
\begin{definition}
The Johnson homomorphism $\tau_{1,k}:\J(k)\to H_1(\Sigma_1)\otimes\Lie_{1,k}$ is defined to be $\tau_{1,k}:={\eta'}^{-1}\circ\sigma$.
\end{definition}

See \cite{J80} and \cite{J83} for details.

\section{The Morita-Heap homomorphisms}\label{moritahom}
Ultimately we will use $\tau_k$ to detect the existence of homeomorphisms that do not extend to any handlebody.  In order to do so we will need a homomorphism $\bar\tau_{1,k}$ originally defined by Morita, and a closely related homomorphism $\bar\tau_k$. 

In \cite{abelian} Morita defines a ``refinement'' of the Johnson homomorphism $\bar \tau_{1,k}:\J(k)\to H_3(N_{1,k})$, such that:
\begin{lemma}[Morita and Heap]\label{heap}
the diagram
$$
\begin{diagram}
    &                                & H_3(N_{1,k} )            \\
    &\ruTo^{\bar\tau_{1,k} } & \dTo_{d^2_1}             \\
\J(k) & \rTo^{\tau_{1,k}}          &H_1(\Sigma_1) \otimes \Lie_{1,k}
\end{diagram}
$$
commutes.  
\end{lemma}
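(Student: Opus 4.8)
The plan is to construct the map $\bar\tau_{1,k}$ explicitly and then verify commutativity by a diagram chase at the level of chains. First I would recall the mapping-torus description of $\bar\tau_{1,k}$: given $[f]\in\J(k)$, its restriction $f'$ to $\Sigma_1$ induces the identity on $N_{1,k}=\pi_1(\Sigma_1)/\pi_1(\Sigma_1)_k$, so $f'$ lifts to a self-map of the $K(N_{1,k},1)$-approximation. The mapping torus of this self-map carries a natural class in $H_3$ of the total space, and pushing this class into $H_3(N_{1,k})$ (via the section-of-the-torus splitting, or equivalently via the Gysin/Wang sequence of the fibration $K(N_{1,k},1)\to T_{f'}\to S^1$) produces $\bar\tau_{1,k}[f]\in H_3(N_{1,k})$. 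This is Morita's construction; Heap's contribution is the identification of $d_1^2$ as the differential in the $E^2$-page of a spectral sequence (the Hochschild--Serre spectral sequence for $1\to \pi_1(\Sigma_1)_k/\pi_1(\Sigma_1)_{k+1}\to N_{1,k+1}\to N_{1,k}\to 1$, or equivalently a Lyndon spectral sequence), whose target $E^2_{1,1}$ is precisely $H_1(N_{1,k})\otimes(\pi_1(\Sigma_1)_k/\pi_1(\Sigma_1)_{k+1})=H_1(\Sigma_1)\otimes\Lie_{1,k}$.

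Next I would set up the comparison. The key observation is that $\tau_{1,k}[f]$ can itself be read off from the same mapping torus: the Wang exact sequence
\[
H_2(N_{1,k+1})\longrightarrow H_2(N_{1,k+1})\xrightarrow{\ (f'_*-\mathrm{id})\ } H_1(N_{1,k+1})
\]
measures exactly the ``defect'' $f'_*(x)x^{-1}$, which lands in $\pi_1(\Sigma_1)_k/\pi_1(\Sigma_1)_{k+1}$ because $[f]\in\J(k)$; and this defect, as a function of $x\in H_1(\Sigma_1)$, is by definition $\sigma'[f]$, hence $\tau_{1,k}[f]$ after applying $\eta'^{-1}$. So both $\bar\tau_{1,k}$ and $\tau_{1,k}$ are incarnations of the Wang/transgression map for the mapping torus fibration, at two adjacent nilpotent levels. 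The differential $d_1^2\colon H_3(N_{1,k})\to H_1(\Sigma_1)\otimes\Lie_{1,k}$ is precisely the bridge between these two levels: it is the $d^2$ differential that detects how a degree-$3$ class in the quotient group $N_{1,k}$ obstructs lifting to $N_{1,k+1}$, and this obstruction is computed by the extension cocycle, which is $f'_*-\mathrm{id}$ on the fiber homology.

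The concrete verification would then be a chain-level diagram chase: represent $\bar\tau_{1,k}[f]$ by an explicit $3$-cycle in the bar complex of $N_{1,k}$ coming from the mapping-torus fundamental class, apply the spectral sequence differential $d_1^2$ by the standard ``lift, apply boundary, project'' recipe, and identify the result with $[x]\mapsto[f'_*(x)x^{-1}]$ after the isomorphism $\eta'$. I expect the main obstacle to be bookkeeping the identifications: reconciling Morita's original topological/gauge-theoretic definition of $\bar\tau_{1,k}$ (via mapping tori and cohomology of nilpotent quotients) with Heap's reformulation as a spectral-sequence differential, and tracking signs and the precise normalization of $\eta'$ so that the square commutes on the nose rather than up to sign. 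Since the excerpt attributes this lemma to Morita and Heap, the cleanest route in the thesis is to cite their constructions for the definition of $\bar\tau_{1,k}$ and for the identification of $d_1^2$, and to supply only the short chain-level argument identifying the composite $d_1^2\circ\bar\tau_{1,k}$ with $\tau_{1,k}$; the genuinely new work is that identification, and everything upstream is quoted from \cite{abelian} and Heap's paper.
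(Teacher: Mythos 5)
The thesis does not actually prove this lemma.  Immediately after the statement it says ``We will not give Morita's original definition of $\bar\tau_{1,k}$ here; rather, we give an alternative definition shown to be equivalent by Heap in \cite{H}.  See \cite{abelian} and \cite{H} for proof of lemma~\ref{heap}.''  So there is no in-paper argument for you to match; the paper's approach is simply to cite.  Your sketch, by contrast, attempts to lay out a proof, and it correctly assembles the structural ingredients: $\bar\tau_{1,k}$ as the pushforward of the mapping-torus fundamental class to $H_3(N_{1,k})$, $d_1^2$ as the Hochschild--Serre differential of the central extension $0\to\Lie_{1,k}\to N_{1,k+1}\to N_{1,k}\to1$, and the ``defect'' $x\mapsto f'_*(x)x^{-1}$ as the thing $\tau_{1,k}$ records.

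However, as a proof it has two genuine gaps.  First, the Wang sequence you write down is not a correct statement: in the Wang sequence of a mapping torus the map $f'_*-\mathrm{id}$ connects $H_n(\text{fiber})$ to $H_n(\text{fiber})$, not $H_2$ to $H_1$; and more importantly, the mapping torus $T_f^\gamma$ that computes $\bar\tau_{1,k}$ is mapped to $K(N_{1,k},1)$, on which $f'_*$ acts by the identity precisely because $[f]\in\J(k)$ --- so the Wang boundary map at that level is null.  The nontriviality only becomes visible after lifting one stage, to $N_{1,k+1}$, and the mechanism for seeing it from $H_3(N_{1,k})$ is the $d_1^2$ differential, not a Wang sequence ``at level $k+1$'' applied naively; you are conflating two distinct fibrations.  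Second, the step that carries all of the content --- the chain-level identification $d_1^2\circ\bar\tau_{1,k}=\tau_{1,k}$ by ``lift, apply boundary, project'' in the bar complex --- is announced but not performed, and you yourself flag it as the ``genuinely new work.''  Until that computation is written out (with the sign/normalization conventions for $\eta'$ tracked through), the sketch establishes plausibility but not the lemma.  Given that the thesis itself defers this verification to \cite{abelian} and \cite{H}, deferring is defensible; but if you want a self-contained argument you must repair the Wang-sequence bookkeeping and actually carry out the $E^2$-page computation.
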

We define $d^2_1:H_3(N_{1,k})\to H_1(\Sigma_1) \otimes \Lie_{1,k}$ below.
We call $\bar\tau_{1,k}$ a refinement of $\tau_{1,k}$ because the latter factors through the former.
We will not give Morita's original definition of $\bar\tau_{1,k}$ here; rather, we give an alternative definition shown to be equivalent by Heap in \cite{H}.  See \cite{abelian} and \cite{H} for proof of lemma~\ref{heap}.  We let $T_{f,1}$ be the mapping torus of $f'=f\vert_{\Sigma_1}$.  That is, $T_{f,1}=\Sigma_1\times [0,1]/(x,0)\sim(f'(x),1)$.  The boundary $\partial T_{f,1}$ is the torus $\partial\Sigma_1\times S^1$.  By gluing in a solid torus, with meridian along the circle $\{x_0\}\times S^1$, we obtain a closed, oriented 3-manifold (that depends only on the mapping class $[f]$ up to homeomorphism) which we denote (following Heap) by $T_f^\gamma$.  Letting $\alpha_{g+i}:=\beta_i$ for $i=1,\ldots,g$, a presentation of $\pi_1(T_{f,1})$ is as follows:
\[
\pi_1(T_{f,1})=\langle\alpha_1,\ldots,\alpha_{2g},\gamma\,|\,[\alpha_1,\gamma]f_*(\alpha_1)\alpha_1^{-1},\ldots,[\alpha_{2g},\gamma]f_*(\alpha_{2g})\alpha_{2g}^{-1}\rangle,
\]
where $\gamma$ is the homotopy class of the curve $\{x_0\}\times S^1$, given an arbitrary orientation.  $T_f^\gamma$ is obtained from $T_{f,1}$ by a Dehn filling along $\gamma$.  There is a presentation of $\pi_1(T_f^\gamma)$ as follows:
\[
\pi_1(T_f^\gamma)=\langle\alpha_1,\ldots,\alpha_{2g}\,|\,f_*(\alpha_1)\alpha_1^{-1},\ldots,f_*(\alpha_{2g})\alpha_{2g}^{-1}\rangle.
\]
We define a continuous map $\phi_{f,k}^1:T_{f,1}\to K(N_{1,k},1)$, where $K(N_{1,k},1)$ is an Eilenberg-MacLane space for the group $N_{1,k}$, induced by the canonical epimorphism:
\begin{equation}\label{tf1}
\pi_1(T_{f,1})\twoheadrightarrow\frac{\pi_1(T_{f,1})}{\langle\gamma,\pi_1(T_{f,1})_k\rangle}\cong\frac{\pi_1(\Sigma_1)}{\pi_1(\Sigma_1)_k}=N_{1,k}.
\end{equation}
The isomorphism of (\ref{tf1}) and the existence of the map $\phi_{f,k}^1$ follows from Lemma~\ref{equivalent} below.  Also, since $T_f^\gamma$ is obtained from $T_{f,1}$ by attaching a 2-cell along $\gamma$ (and then attaching a 3-cell) and since $\gamma$ is in the kernel of the canonical epimorphism~\eqref{tf1}, it follows that $\phi_{f,k}^1$ extends to a continuous map $\phi_{f,k}^\gamma:T_f^\gamma\to K(N_{1,k},1)$, and $\phi_{f,k}^\gamma$ induces the canonical epimorphism
\[
\pi_1(T_f^\gamma)\twoheadrightarrow\frac{\pi_1(T_{f}^\gamma)}{\pi_1(T_{f}^\gamma)_k}\cong N_{1,k}.
\]
The following lemma is given in \cite{H}, where it is labeled Lemma~5.1.
\begin{lemma}[Heap]\label{equivalent}
The following are equivalent:
\begin{enumerate}
\item $f\in\J(k)$,
\item $\frac{\pi_1(T_{f,1})}{\langle\gamma,\pi_1(T_{f,1})_k\rangle}\cong N_{1,k}\cong\frac{\pi_1(T_f^\gamma)}{\pi_1(T_f^\gamma)_k}$, and
\item the continuous maps $\phi_{f,k}^1$ and $\phi_{f,k}^\gamma$ exist as defined.
\end{enumerate}
\end{lemma}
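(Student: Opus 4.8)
The plan is to prove a cyclic chain of implications $(1)\Rightarrow(2)\Rightarrow(3)\Rightarrow(1)$, with the bulk of the work lying in $(1)\Rightarrow(2)$; the other two implications are essentially formal consequences of how the spaces and maps were set up. First I would fix notation using the presentation of $\pi_1(T_{f,1})$ displayed above, namely with generators $\alpha_1,\dots,\alpha_{2g},\gamma$ and relators $[\alpha_i,\gamma]f_*(\alpha_i)\alpha_i^{-1}$. The key observation for $(1)\Rightarrow(2)$ is that, modulo the normal subgroup $\langle\gamma\rangle$, these relators become $f_*(\alpha_i)\alpha_i^{-1}$, so that
\[
\frac{\pi_1(T_{f,1})}{\langle\gamma\rangle}\;\cong\;\langle\alpha_1,\dots,\alpha_{2g}\,|\,f_*(\alpha_1)\alpha_1^{-1},\dots,f_*(\alpha_{2g})\alpha_{2g}^{-1}\rangle\;=\;\pi_1(T_f^\gamma),
\]
which already identifies the two quotients appearing on the outer ends of statement (2) once we further quotient by the $k$-th term of the lower central series. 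So it remains to identify this common group, call it $Q_k := \pi_1(T_f^\gamma)/\pi_1(T_f^\gamma)_k$, with $N_{1,k}=\pi_1(\Sigma_1)/\pi_1(\Sigma_1)_k$ precisely when $f\in\J(k)$.

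For that identification I would proceed as follows. Since $\pi_1(\Sigma_1)$ is free on $\alpha_1,\dots,\alpha_{2g}$ (here I am using $\alpha_{g+i}=\beta_i$, reordered as needed to match the basis $\{a_i,b_i\}$ of Section~\ref{sec:model}), there is a canonical epimorphism $\pi_1(\Sigma_1)\twoheadrightarrow\pi_1(T_f^\gamma)$ sending $\alpha_i\mapsto\alpha_i$, whose kernel is normally generated by the words $w_i := f_*(\alpha_i)\alpha_i^{-1}$. The hypothesis $f\in\J(k)$ says exactly that each $w_i$ lies in $\pi_1(\Sigma_1)_k$. Now consider the induced map on the $k$-step nilpotent quotients $N_{1,k}\to Q_k$. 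Since the relators $w_i$ all lie in $\pi_1(\Sigma_1)_k$, their images in $N_{1,k}$ are trivial — wait: more precisely, their images lie in the image of $\pi_1(\Sigma_1)_k$ in $N_{1,k}$, which is trivial — so this map $N_{1,k}\to Q_k$ is in fact an isomorphism: it is surjective because $\pi_1(\Sigma_1)\to\pi_1(T_f^\gamma)$ is, and it is injective because adding relators that are already trivial in $N_{1,k}$ changes nothing. (To make the injectivity rigorous I would run it at the level of presentations: $N_{1,k}$ is the free nilpotent group of class $k-1$ on $2g$ generators; $Q_k$ is obtained from that same group by imposing the further relations $\bar w_i=1$, which are vacuous in $N_{1,k}$, so the natural map is an iso.) Conversely, if $f\notin\J(k)$, then some $w_i\notin\pi_1(\Sigma_1)_k$, so $\bar w_i\neq 1$ in $N_{1,k}$, and $Q_k$ is a proper quotient of $N_{1,k}$; a rank count on $\mathrm{Lie}_{k-1}$ or simply the fact that $N_{1,k}$ is Hopfian shows the natural surjection $N_{1,k}\to Q_k$ cannot then be an isomorphism. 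This proves $(1)\Leftrightarrow(2)$.

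For $(2)\Rightarrow(3)$: given the isomorphisms in (2), the composite $\pi_1(T_{f,1})\twoheadrightarrow \pi_1(T_{f,1})/\langle\gamma,\pi_1(T_{f,1})_k\rangle\xrightarrow{\ \cong\ }N_{1,k}$ defines a homomorphism $\pi_1(T_{f,1})\to N_{1,k}$, which — since $K(N_{1,k},1)$ is aspherical — is realized by a continuous map $\phi^1_{f,k}\colon T_{f,1}\to K(N_{1,k},1)$, unique up to homotopy. Because $\gamma$ maps to $1$ under this homomorphism and $T_f^\gamma$ is obtained from $T_{f,1}$ by attaching a $2$-cell along $\gamma$ and then a $3$-cell (the $3$-cell being irrelevant to extension over a $K(\pi,1)$), the map $\phi^1_{f,k}$ extends over the $2$-cell, giving $\phi^\gamma_{f,k}$; this is exactly the argument already sketched in the paragraph preceding the lemma, so I would simply invoke it. Finally $(3)\Rightarrow(1)$: if $\phi^1_{f,k}$ exists inducing the surjection $\pi_1(T_{f,1})\to N_{1,k}$ with $\gamma$ in the kernel, then factoring through $\pi_1(T_{f,1})/\langle\gamma\rangle\cong\pi_1(\Sigma_1)/\langle\langle w_i\rangle\rangle$ we get a surjection of this onto $N_{1,k}$ under which each $w_i\mapsto 1$; pulling back along $\pi_1(\Sigma_1)\twoheadrightarrow\pi_1(\Sigma_1)/\langle\langle w_i\rangle\rangle$ and comparing with the canonical surjection $\pi_1(\Sigma_1)\twoheadrightarrow N_{1,k}$ forces $w_i\in\pi_1(\Sigma_1)_k$ for all $i$, i.e.\ $f\in\J(k)$.

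The main obstacle is the injectivity half of $(1)\Rightarrow(2)$: one must be careful that ``killing relators that already vanish'' really does leave the nilpotent quotient unchanged, which is where the freeness of $\pi_1(\Sigma_1)$ and the standard presentation of free nilpotent groups (equivalently, Hopficity of finitely generated nilpotent groups, or a direct rank count using $\mathrm{Lie}(\Sigma_1)$ and Witt's formula, Theorem~\ref{witt}) genuinely enters. Everything else is bookkeeping with presentations and the universal property of $K(\pi,1)$'s.
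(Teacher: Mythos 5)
Your argument is correct. Note, however, that the paper does not actually supply a proof of this lemma: it remarks that ``the proof \ldots is straightforward and is given in \cite{H}'' and moves on, so there is nothing to compare against approach-for-approach. What you have written is a self-contained reconstruction of the omitted argument, and it is the natural one.

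A few observations on your write-up. The real content is exactly where you put it: identifying $\pi_1(T_f^\gamma)$ with $\pi_1(\Sigma_1)/\langle\!\langle w_1,\dots,w_{2g}\rangle\!\rangle$ (which follows immediately from killing $\gamma$ in the displayed presentation of $\pi_1(T_{f,1})$, or equivalently from van Kampen applied to the Dehn filling), and then noting that for a normal subgroup $R\lhd F$ with $F$ free, the $k$-step nilpotent quotient of $F/R$ is $F/(F_k R)$; this equals $N_{1,k}=F/F_k$ precisely when $R\subset F_k$, which holds iff every $w_i\in F_k$, i.e.\ iff $f\in\J(k)$. Your appeal to Hopficity of $N_{1,k}$ (a finitely generated nilpotent, hence residually finite, group) to upgrade ``the natural map $N_{1,k}\to Q_k$ is not an isomorphism'' to ``$Q_k\not\cong N_{1,k}$'' is the right way to close the converse, since the lemma as stated only asserts an abstract isomorphism; this point is easy to overlook. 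The implications involving statement~(3) are, as you say, essentially formal: the existence of $\phi^1_{f,k}$ ``as defined'' presupposes the isomorphism~\eqref{tf1}, so $(3)\Rightarrow(2)$ is close to tautological, and the extension over the Dehn-filling $2$-cell (the $3$-cell being irrelevant for mapping into a $K(\pi,1)$) is the standard cellular argument that the paper already sketches. Your stated plan was a three-step cycle $(1)\Rightarrow(2)\Rightarrow(3)\Rightarrow(1)$ but you in fact prove $(1)\Leftrightarrow(2)$ outright before doing $(2)\Rightarrow(3)\Rightarrow(1)$; that redundancy is harmless. One minor point worth making explicit in $(3)\Rightarrow(1)$: the composite $\pi_1(\Sigma_1)\hookrightarrow\pi_1(T_{f,1})\to N_{1,k}$ is the canonical surjection by construction of $\phi^1_{f,k}$, and this is what lets you conclude $w_i\in\pi_1(\Sigma_1)_k$ from $w_i$ lying in its kernel; you implicitly use this when ``comparing with the canonical surjection'' but do not say why the two agree.
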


The proof of Lemma~\ref{equivalent} is straightforward and is given in \cite{H}.  We are now ready to define $\bar\tau_{1,k}:\J(k)\to H_3(N_{1,k})$.  Note that we can take $H_3(N_{1,k})=H_3(K(N_{1,k},1))$ and define
\[
\bar\tau_{1,k}([f])=\phi^\gamma_{f,k *}([T_f^\gamma]).
\]
That is, $[f]$ is taken to the image of the fundamental class of the closed 3-manifold $T_f^\gamma$ in $H_3(N_{1,k})$.  That $\bar\tau_{1,k}$ is well defined is immediate since if $f$ and $g$ are isotopic then $T_f^\gamma$ and $T_g^\gamma$ are homeomorphic, with the homeomorphism being canonical up to isotopy.  It is not immediate that $\bar\tau_{1,k}$ is a homomorphism.  One must show that $\bar\tau_{1,k}(f\circ g)=\bar\tau_{1,k}(f)+\bar\tau_{1,k}(g)$ for all $f,g\in\J(k)$.  The main thrust of the proof is to build a 4-manifold whose (oriented) boundary is $T_f^\gamma\coprod T_g^\gamma\coprod -T_{f\circ g}^\gamma$ and show that the maps $\phi_{f,k}^\gamma$, $\phi_{g,k}^\gamma$ and $\phi_{f\circ g,k}^\gamma$ extend over this manifold.  Details are provided in \cite{H}.

We now describe the map $d^2_1$. We have the central extension
\begin{equation}\label{firstextension}
0\to\Lie_{1,k}\to N_{1,k+1}\to N_{1,k}\to1.
\end{equation}
Associated to this extension is a Hochschild-Serre spectral sequence for the \emph{homology} of $N_{1,k}$, $(E^r_{p,q}, d^r_1)$ with $E^2_{p,q}=H_p(N_{1,k};H_q(\Lie_{1,k}))$.  Since the extension (\ref{firstextension}) is central, we have $E^2_{p,q}=H_p(N_{1,k})\otimes H_q(\Lie_{1,k})$. Finally, we have the differential $d^2_1:E^2_{3,0}=H_3(N_{1,k})\to E^2_{1,1}=H_1(N_{1,k})\otimes H_1(\Lie_{1,k})=H_1(\Sigma_1)\otimes\Lie_{1,k}$.  We have now defined all of the maps appearing in Lemma~\ref{heap}.

We define a homomorphism $\bar\tau_k:\J(k)\to H_3(N_k)$.  This definition is a slight modification of Heap's definition of $\bar\tau_{1,k}$.  $\bar\tau_{1,k}$ and $\bar\tau_k$ are closely related as will be shown in Section~\ref{propproof}.  For $[f]\in\J(k)$ let  $T_f$ be the mapping torus of $f$.  That is $T_f=\Sigma\times[0,1]/(x,0)\sim(f(x),1)$.  Note that $T_f$ is a closed oriented 3-manifold that depends only on $[f]$ up to homeomorphism.  Similar to above we define a continuous map $\phi_{f,k}:T_f\to K(N_k,1)$ induced by the canonical epimorphism
$$
\pi_1(T_f)\twoheadrightarrow\frac{\pi_1(T_f)}{\langle\gamma,\pi_1(T_f)_k\rangle}\cong N_{k}.
$$
Analogous to Lemma~\ref{equivalent} we have:
\begin{lemma}\label{equivalent2}
The following are equivalent:
\begin{enumerate}
\item $f\in\J'(k)$,
\item $\frac{\pi_1(T_{f})}{\langle\gamma,\pi_1(T_{f})_k\rangle}\cong N_k$, and
\item the continuous map $\phi_{f,k}$ exists as defined.
\end{enumerate}
\end{lemma}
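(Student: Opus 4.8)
My plan is to imitate Heap's proof of Lemma~\ref{equivalent}, replacing $\Sigma_1$ by the closed surface $\Sigma$; since $\Sigma$ is closed there is no boundary curve to fill in, so the argument will not need the Dehn-filling discussion or the auxiliary manifold $T_f^\gamma$, and it should in fact be shorter. I would begin by recording the relevant presentation: identify $\pi_1(T_f)$ with the semidirect product $\pi_1(\Sigma)\rtimes_{f_*}\z$ in which $\gamma$ is the stable letter $\{x_0\}\times S^1$, so the relations are $\gamma\alpha\gamma^{-1}=f_*(\alpha)$ for $\alpha\in\pi_1(\Sigma)$, equivalently $[\alpha,\gamma]=(f_*(\alpha)\alpha^{-1})^{-1}$. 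I will also use freely, as the paper does elsewhere, that the terms of the lower central series are verbal subgroups, so any group epimorphism carries the $k$-th lower central term onto that of the target.

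For $(1)\Rightarrow(2)$ I would argue as follows. If $[f]\in\J'(k)$, then $f_*(\alpha)\alpha^{-1}\in\pi_1(\Sigma)_k$ for all $\alpha$, so every conjugate $\alpha\gamma\alpha^{-1}=[\alpha,\gamma]\gamma$ differs from $\gamma$ by an element $[\alpha,\gamma]=(f_*(\alpha)\alpha^{-1})^{-1}$ lying in $\pi_1(\Sigma)_k\subseteq\pi_1(T_f)_k$; hence $\langle\gamma,\pi_1(T_f)_k\rangle$ is normal and equals $\langle\langle\gamma\rangle\rangle\,\pi_1(T_f)_k$. Next, killing $\langle\langle\gamma\rangle\rangle$ alone turns the relations $\gamma\alpha\gamma^{-1}=f_*(\alpha)$ into $f_*(\alpha)\alpha^{-1}=1$, giving $\pi_1(T_f)/\langle\langle\gamma\rangle\rangle\cong Q:=\pi_1(\Sigma)/\langle\langle f_*(\alpha)\alpha^{-1}\,:\,\alpha\in\pi_1(\Sigma)\rangle\rangle$; because the normally generating relators of $Q$ lie in $\pi_1(\Sigma)_k$, the projection $\pi_1(\Sigma)\twoheadrightarrow Q$ has kernel inside $\pi_1(\Sigma)_k$ and therefore induces an isomorphism $N_k\cong Q/Q_k$. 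Composing the two reductions gives $\pi_1(T_f)/\langle\gamma,\pi_1(T_f)_k\rangle\cong Q/Q_k\cong N_k$, with the isomorphism the one induced by $\Sigma\hookrightarrow T_f$; that is exactly (2). For $(2)\Rightarrow(1)$: since $\pi_1(T_f)$ is generated by $\pi_1(\Sigma)$ and $\gamma$, and both $\gamma$ and $\pi_1(\Sigma)_k\subseteq\pi_1(T_f)_k$ die in the quotient, $\pi_1(\Sigma)$ surjects onto $\pi_1(T_f)/\langle\gamma,\pi_1(T_f)_k\rangle$ through $N_k$; by hypothesis the target is abstractly $N_k$, which is finitely generated nilpotent, hence Hopfian, so this epimorphism $N_k\twoheadrightarrow N_k$ is an isomorphism and the kernel of $\pi_1(\Sigma)\to\pi_1(T_f)/\langle\gamma,\pi_1(T_f)_k\rangle$ is exactly $\pi_1(\Sigma)_k$. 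Then $\gamma\alpha\gamma^{-1}=f_*(\alpha)$ together with $\gamma=1$ in the quotient forces $f_*(\alpha)\alpha^{-1}\in\pi_1(\Sigma)_k$ for every $\alpha$, so $[f]\in\J'(k)$.

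For $(2)\Leftrightarrow(3)$ I would simply note that homotopy classes of maps $T_f\to K(N_k,1)$ correspond to homomorphisms $\pi_1(T_f)\to N_k$, and that $\phi_{f,k}$ as defined is the map realizing the canonical epimorphism $\pi_1(T_f)\twoheadrightarrow\pi_1(T_f)/\langle\gamma,\pi_1(T_f)_k\rangle\cong N_k$; such a map exists if and only if that epimorphism does, which is precisely (2).

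I do not expect any step to be a serious obstacle. The one place that needs genuine care is the bookkeeping in the $(1)\Rightarrow(2)$ direction, namely distinguishing the \emph{normal closure} $\langle\langle\gamma\rangle\rangle$ from the \emph{subgroup} $\langle\gamma,\pi_1(T_f)_k\rangle$ and checking that they agree exactly when $[f]\in\J'(k)$, together with the repeated (routine, but easy to misapply) use of the fact that epimorphisms of groups preserve lower central terms. Everything else is formal manipulation of the semidirect-product presentation, in parallel with Heap's treatment of the one-boundary-component case.
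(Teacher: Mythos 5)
Your proposal is correct, and it is exactly the argument the paper intends: the thesis does not prove Lemma~\ref{equivalent2} at all, but simply states it as ``analogous to Lemma~\ref{equivalent}'' (whose proof is deferred to Heap's paper), and your write-up is the natural closed-surface adaptation of Heap's proof --- the semidirect-product presentation of $\pi_1(T_f)$, the observation that $[f]\in\J'(k)$ is precisely the condition that $[\alpha,\gamma]\in\pi_1(T_f)_k$, the Wang-style identification $\pi_1(T_f)/\langle\langle\gamma\rangle\rangle\cong\pi_1(\Sigma)/\langle\langle f_*(\alpha)\alpha^{-1}\rangle\rangle$, and Hopficity of the finitely generated nilpotent group $N_k$ for the converse direction. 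The bookkeeping you flag (distinguishing $\langle\gamma,\pi_1(T_f)_k\rangle$ from $\langle\langle\gamma\rangle\rangle$ and verifying they coincide under hypothesis (1)) is handled correctly, so there is nothing to add.
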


We take $H_3(N_{k})=H_3(K(N_{k},1))$ and define
\[
\bar\tau_{k}([f])=\phi_{f,k *}([T_f^\gamma]).
\]
That is $[f]$ is taken to the image of the fundamental class of the closed 3-manifold $T_f$ in $H_3(N_{k})$.  
From Lemma~\ref{equivalent2} we could take $\bar\tau_k$ to be a homomorphism with domain the group $\J'(k)$; however, we will content ourselves to think of the domain of $\bar\tau_k$ as the subgroup $\J(k)\subset\J'(k)$.

There is a Hochschild-Serre spectral sequence associated to the central extension
\[
0\to\Lie_{k}\to N_{k+1}\to N_{k}\to1
\]
and this yields a differential $d^2:H_3(N_k)\to H_1(\Sigma) \otimes \Lie_{k}$.  Similar to lemma~\ref{heap} we have
\begin{proposition}\label{diagram}
The diagram
\[
\begin{diagram}
    &                                & H_3(N_{k} )            \\
    &\ruTo^{\bar\tau_{k} } & \dTo_{d^2}             \\
\J(k) & \rTo^{\tau_{k}}          &H_1(\Sigma) \otimes \Lie_{k}
\end{diagram}
\]
commutes.
\end{proposition}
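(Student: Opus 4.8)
The plan is to reduce Proposition~\ref{diagram} to Lemma~\ref{heap} (the Morita--Heap identity for the ``$\Sigma_1$'' versions) by pushing every map forward along the inclusion $i\colon\Sigma_1\hookrightarrow\Sigma$. The map $i$ induces a morphism of central extensions from $0\to\Lie_{1,k}\to N_{1,k+1}\to N_{1,k}\to1$ to $0\to\Lie_k\to N_{k+1}\to N_k\to1$, in which $\Lie_{1,k}\to\Lie_k$ is the degree-$k$ part of the canonical epimorphism $\Lie_{(1)}\to\Lie_{(1)}/\mathfrak I=\Lie$ of Lemma~\ref{labute1}; hence $i$ also induces $i_*\colon H_3(N_{1,k})\to H_3(N_k)$ and a map $i_*\otimes i_*\colon H_1(\Sigma_1)\otimes\Lie_{1,k}\to H_1(\Sigma)\otimes\Lie_k$, where on the first tensor factor $i_*$ is the canonical isomorphism $H_1(\Sigma_1)\cong H_1(\Sigma)$. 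I would then establish three compatibilities: (A) $i_*\circ\bar\tau_{1,k}=\bar\tau_k$; (B) $d^2\circ i_*=(i_*\otimes i_*)\circ d^2_1$; and (C) $(i_*\otimes i_*)\circ\tau_{1,k}=\tau_k$. Given these, the proposition is immediate:
\[
d^2\circ\bar\tau_k=d^2\circ i_*\circ\bar\tau_{1,k}=(i_*\otimes i_*)\circ d^2_1\circ\bar\tau_{1,k}=(i_*\otimes i_*)\circ\tau_{1,k}=\tau_k,
\]
the third equality being Lemma~\ref{heap}. Of the three inputs, (B) and (C) are formal; the content is in (A), and that is the step I expect to be the main obstacle.

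The delicate point in (A) is that $T_f$ and $T_f^\gamma$ are genuinely different closed $3$-manifolds: each is a Dehn filling of the common bounded piece $T_{f,1}$, but along a different slope of its boundary torus $\partial\Sigma_1\times S^1$ --- one fills along $\gamma=\{x_0\}\times S^1$ to get $T_f^\gamma$, while $T_f=T_{f,1}\cup(D\times S^1)$ is the filling along $\zeta:=\partial\Sigma_1\times\{\mathrm{pt}\}$, and these two slopes meet once. I would argue through classifying spaces, using repeatedly that a map from a connected complex into the aspherical space $K(N_k,1)$ is determined up to homotopy by the homomorphism it induces on $\pi_1$. Let $\Phi\colon T_{f,1}\to K(N_k,1)$ be $\phi_{f,k}^1$ followed by the map $K(N_{1,k},1)\to K(N_k,1)$ induced by $i_*\colon N_{1,k}\to N_k$. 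By construction $\Phi$ kills $\gamma$; and $\zeta$ bounds the disk $D$ in $\Sigma$, so it dies in $N_k$ and $\Phi$ kills it too. Thus $\Phi$ sends all of $\pi_1(\partial T_{f,1})$ to the trivial group; consequently $\Phi$ factors up to homotopy through the collapse $T_{f,1}\to T_{f,1}/\partial T_{f,1}=:Y$, by some $\bar\Phi\colon Y\to K(N_k,1)$, and $\Phi$ also extends over both fillings --- the extension over $T_f^\gamma$ being homotopic to $\phi_{f,k}^\gamma$ pushed into $K(N_k,1)$, and the extension over $T_f$ being homotopic to $\phi_{f,k}$, since in each case the induced $\pi_1$-maps agree. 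Each of these extensions is null-homotopic on the filling solid torus, whose core is isotopic into $\partial T_{f,1}$ and hence annihilated by $\Phi$; so each extension factors through the collapse of that solid torus, which is again $Y$ and again by $\bar\Phi$. Since the collapse maps $T_f^\gamma\to Y$ and $T_f\to Y$ each send the fundamental class to the generator $[T_{f,1},\partial T_{f,1}]$ of $H_3(Y)\cong H_3(T_{f,1},\partial T_{f,1})\cong\z$, chasing fundamental classes gives $i_*\bar\tau_{1,k}[f]=\bar\Phi_*[T_{f,1},\partial T_{f,1}]=\bar\tau_k[f]$, which is (A). The work here is in arranging the extensions and collapses compatibly and in the orientation bookkeeping.

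Compatibility (B) is pure naturality of the Hochschild--Serre spectral sequence: the morphism of central extensions above induces a morphism of the associated spectral sequences, which commutes with $d^2$ and intertwines $i_*$ on $E^2_{3,0}=H_3(-)$ with $i_*\otimes i_*$ on $E^2_{1,1}$. Compatibility (C) is a direct computation from $\tau_k=\eta^{-1}\circ\sigma$ and $\tau_{1,k}={\eta'}^{-1}\circ\sigma'$: the commuting square relating $f_*$ to $f'_*$ through $i_*$ shows that $\sigma'[f]$, transported along $H_1(\Sigma_1)\cong H_1(\Sigma)$ and $\Lie_{1,k}\twoheadrightarrow\Lie_k$, becomes $\sigma[f]$, while the fact that $i_*$ carries the intersection form of $\Sigma_1$ to $\omega$ shows it intertwines $\eta'$ with $\eta$; together these give (C). As a remark, one could instead prove Proposition~\ref{diagram} directly by adapting Heap's argument for Lemma~\ref{heap} from \cite{H} to the closed mapping torus $T_f$ in place of $T_f^\gamma$, but the reduction above is shorter and isolates the one genuinely new point, namely (A).
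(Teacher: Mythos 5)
Your reduction to Lemma~\ref{heap} via the three compatibilities (A), (B), (C) is exactly the paper's decomposition --- they are Lemmas~\ref{lemma2}, \ref{lemma3}, and \ref{lemma1} respectively --- and your treatments of (B) (naturality of the Hochschild--Serre spectral sequence under the morphism of central extensions) and (C) (unwinding the definitions of $\tau_k$ and $\tau_{1,k}$ in a symplectic basis) coincide in substance with the paper's proofs. The genuine departure is in (A). The paper proves Lemma~\ref{lemma2} by building an explicit $4$-dimensional cobordism $\hat W$ between $T_f^\gamma$ and $T_f$ over $N_k$: one glues $T_f\times[0,1]$ to a thickened $S^3=(B^2\times S^1)\cup(S^1\times B^2)$ along a solid torus, caps the residual $S^3$ boundary with a $4$-ball, and checks that the maps into $K(N_k,1)$ extend over $\hat W$, so that $[T_f^\gamma]$ and $[T_f]$ have the same image in $H_3(N_k)$. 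You avoid the $4$-manifold entirely: both $T_f^\gamma$ and $T_f$ are Dehn fillings of the common bounded piece $T_{f,1}$, and your observation that $\zeta$ becomes the surface relator (hence trivial) in $N_k$ shows that the composite $T_{f,1}\to K(N_{1,k},1)\to K(N_k,1)$ kills the entire boundary torus, hence factors through $Y=T_{f,1}/\partial T_{f,1}$ by a single $\bar\Phi$; both fillings factor through the same $\bar\Phi$, and both collapse maps carry the fundamental class to $[T_{f,1},\partial T_{f,1}]\in H_3(Y)$ with the orientations agreeing because they all restrict from the orientation on $T_{f,1}$. Both arguments are correct. Yours is shorter and isolates the one essential mechanism (that $\zeta$ dies in $N_k$) more transparently; the paper's cobordism construction has the expository virtue of paralleling Heap's cobordism proof that $\bar\tau_{1,k}$ is a homomorphism, so it fits the surrounding development.
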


Proof of proposition~\ref{diagram} is the content of the following section.

\section{Proof of proposition~\ref{diagram} }\label{propproof}

Proposition~\ref{diagram} follows from Lemma~\ref{heap} and Lemmas~\ref{lemma1}, \ref{lemma2} and \ref{lemma3} below.

\begin{lemma}\label{lemma1}
The diagram
\[
\begin{diagram}
    &                                &H_1(\Sigma_1) \otimes \Lie_{1,k} \\
    &\ruTo^{\tau_{1,k} } & \dTo_{i_*}             \\
\J(k) & \rTo^{\tau_{k}}          &H_1(\Sigma) \otimes \Lie_{k}
\end{diagram}
\]
commutes.
\end{lemma}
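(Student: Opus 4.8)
The plan is to unwind the definitions of $\tau_k$ and $\tau_{1,k}$ (via $\sigma$, $\sigma'$, $\eta$, $\eta'$) and track an element $[f]\in\J(k)$ through both routes of the square, using the commutative diagram relating $\pi_1(\Sigma_1)$ and $\pi_1(\Sigma)$ together with Lemma~\ref{labute1}, which tells us that the map $\Lie_{(1)}\to\Lie$ induced by $i:\Sigma_1\hookrightarrow\Sigma$ is exactly the canonical quotient $\Lie_{(1)}\to\Lie_{(1)}/\mathfrak I$. The key observation is that $i_*$ here means two compatible things: on $H_1$ it is the canonical isomorphism $H_1(\Sigma_1)\cong H_1(\Sigma)$ identifying $\alpha_i\mapsto\alpha_i$, $\beta_i\mapsto\beta_i$, and on $\Lie_{1,k}$ it is the restriction to degree $k$ of $\Lie_{(1)}\to\Lie$; the map labeled $i_*$ in the statement is the tensor product of these two.

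First I would fix $[f]\in\J(k)$ and compute $\sigma'([f])\in\Hom(H_1(\Sigma_1),\Lie_{1,k})$: it sends $[x]\mapsto\overline{f'_*(x)x^{-1}}$ for $x\in\pi_1(\Sigma_1)$. Post-composing with $i_*:\Lie_{1,k}\to\Lie_k$ and using the commutative square $i_*\circ f'_* = f_*\circ i_*$ from Proposition~\ref{prop:altfilt}, I get that for $x_1\in\pi_1(\Sigma_1)$ with $i_*(x_1)=x$, the element $i_*(f'_*(x_1)x_1^{-1})=f_*(x)x^{-1}$, so the induced homomorphism $H_1(\Sigma_1)\to\Lie_k$ obtained from $\sigma'([f])$ by applying $i_*$ on the target agrees with $\sigma([f])$ precomposed with the identification $H_1(\Sigma_1)\cong H_1(\Sigma)$. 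In other words, the square of $\sigma$-type maps,
\[
\begin{diagram}
\J(k) & \rTo^{\sigma'} & \Hom(H_1(\Sigma_1),\Lie_{1,k})\\
\dEquals & & \dTo_{(i_*)_\ast}\\
\J(k) & \rTo^{\sigma} & \Hom(H_1(\Sigma),\Lie_k)
\end{diagram}
\]
commutes, where $(i_*)_\ast$ uses the $H_1$-identification on the source-variable and $i_*$ on the target; this is essentially the first half of the proof of Proposition~\ref{prop:altfilt} repackaged.

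Next I would check that $\eta$ and $\eta'$ are compatible with $i_*$, i.e. that the square relating $H_1(\Sigma_1)\otimes\Lie_{1,k}\to\Hom(H_1(\Sigma_1),\Lie_{1,k})$ and $H_1(\Sigma)\otimes\Lie_k\to\Hom(H_1(\Sigma),\Lie_k)$ via $\eta',\eta$ on the horizontal and $i_*\otimes i_*$ resp.\ $(i_*)_\ast$ on the vertical commutes. This is immediate from the explicit formula $\eta(h\otimes l)=([x]\mapsto\omega(h,[x])l)$, because the intersection form $\omega$ on $H_1(\Sigma)$ pulls back to the intersection form on $H_1(\Sigma_1)$ under the canonical identification (both $\alpha_i\cdot\beta_i=1$), so $\omega(i_*h,i_*x)=\omega(h,x)$ and applying $i_*$ to $l\in\Lie_{1,k}$ on both sides matches up. Since $\eta$ is an isomorphism (the lemma just proved in the excerpt), $\tau_k=\eta^{-1}\circ\sigma$ and $\tau_{1,k}={\eta'}^{-1}\circ\sigma'$, and pasting the two commuting squares gives $i_*\circ\tau_{1,k}=\tau_k$, which is the claim.

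The main obstacle is bookkeeping rather than anything deep: one must be scrupulous that the symbol $i_*$ is being used consistently on $H_1$ (an isomorphism), on $\Lie_{(1)}$ (a quotient, by Lemma~\ref{labute1}), and on the Hom-groups, and that the codomain $\Lie_k=\Lie_{(1)}/\mathfrak I$ in degree $k$ is literally the image of $\Lie_{1,k}$. Once the identifications are pinned down, compatibility of $\omega$ and the commuting square $i_*\circ f'_*=f_*\circ i_*$ do all the work. I do not anticipate needing anything beyond Lemma~\ref{labute1}, the definitions of $\tau_k,\tau_{1,k}$, and the functoriality already established.
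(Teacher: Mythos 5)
Your argument is correct and takes essentially the same route as the paper's: both unwind $\tau_{1,k}={\eta'}^{-1}\circ\sigma'$ and $\tau_k=\eta^{-1}\circ\sigma$ and exploit that $f'$ is the restriction of $f$ (so $i_*\circ f'_*=f_*\circ i_*$) together with the compatibility of the symplectic forms and the identification $H_1(\Sigma_1)\cong H_1(\Sigma)$. The only difference is presentational: the paper carries out the computation directly in the symplectic basis $\{\alpha_i,\beta_i\}$ rather than splitting the triangle into separate $\sigma$- and $\eta$-squares, but the content is identical.
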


\begin{proof}
We compute in the symplectic basis $\{\alpha_1,\ldots,\alpha_g,\beta_1,\ldots, \beta_g\}$ for $H_1(\Sigma_1)$.  We have under the map $\tau_{1,k}$:
$$
[f]\rMapsto([x]\mapsto[f'_*(x)x^{-1}])\rMapsto\sum_i\alpha_i\otimes[f'(b_i)b_i^{-1}]-\sum_i\beta_i\otimes[f'(a_i)a_i^{-1}].
$$
But the curves $f'(a_i)$ and $f'(b_i)$ are exactly the curves $f(a_i)$ and $f(b_i)$ respectively.  So under the inclusion map $i_*$, the element $\sum_i\alpha_i\otimes[f'(b_i)b_i^{-1}]-\sum_i\beta_i\otimes[f'(a_i)a_i^{-1}]$ gets sent to
$$
\sum_i\alpha_i\otimes[f(b_i)b_i^{-1}]-\sum_i\beta_i\otimes[f(a_i)a_i^{-1}]=\tau_k[f].
$$
\end{proof}

Of course inclusion $i:\Sigma_1\hookrightarrow\Sigma$ induces a map $i_*:H_3(N_{1,k})\to H_3(N_k)$.  We have:

\begin{lemma}\label{lemma2}
The diagram
\[
\begin{diagram}
    &                                &H_3(N_{1,k}) \\
    &\ruTo^{\bar\tau_{1,k} } & \dTo_{i_*}    \\
\J(k) & \rTo^{\bar\tau_{k}}      &H_3(N_{k})
\end{diagram}
\]
commutes.
\end{lemma}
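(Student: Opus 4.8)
The plan is to prove the identity $i_*\circ\bar\tau_{1,k}=\bar\tau_k$ on $\J(k)$ by showing that both sides send $[f]$ to one and the same homology class built out of the mapping torus $T_{f,1}$. Fix $[f]\in\J(k)$; by Proposition~\ref{prop:altfilt} we also have $[f]\in\J'(k)$, so all of $\phi^1_{f,k}$, $\phi^\gamma_{f,k}$, $\phi_{f,k}$, $\bar\tau_{1,k}([f])$ and $\bar\tau_k([f])$ are defined. Because $f$ fixes $D$ pointwise, the part of $T_f$ lying over $D$ is $D\times S^1$, so $T_f=T_{f,1}\cup_{\partial T_{f,1}}V_0$, where $V_0$ is the solid torus with meridian $\partial D\times\{\mathrm{pt}\}$; by construction $T_f^\gamma=T_{f,1}\cup_{\partial T_{f,1}}V_1$, where $V_1$ is the solid torus with meridian $\gamma=\{x_0\}\times S^1$. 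Recall that $\partial T_{f,1}$ is a torus whose fundamental group is generated by $\gamma$ and by the boundary curve $\partial\Sigma_1$, which represents the word $[a_1,b_1]\cdots[a_g,b_g]$ in $\pi_1(\Sigma_1)$.

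The one computation that matters is this: the image $\Omega$ of $\partial\Sigma_1$ in $N_{1,k}$ can be nonzero --- it is the symplectic class of Lemma~\ref{labute1} --- but its image $i_*(\Omega)$ in $N_k$ vanishes, since $[a_1,b_1]\cdots[a_g,b_g]=1$ already in $\pi_1(\Sigma)$. I record three consequences. (i) The composite $\psi:=i\circ\phi^1_{f,k}\colon T_{f,1}\to K(N_k,1)$ restricts to a nullhomotopic map on $\partial T_{f,1}$, because on $\pi_1(\partial T_{f,1})$ it kills $\gamma$ (killed by $\phi^1_{f,k}$ by definition) and kills $\partial\Sigma_1$ (which maps to $i_*(\Omega)=0$); so after a homotopy $\psi$ descends to $\bar\psi\colon T_{f,1}/\partial T_{f,1}\to K(N_k,1)$. (ii) $\phi_{f,k}\colon T_f\to K(N_k,1)$ kills $\gamma$, hence kills $\ker(\pi_1(T_f)\to\pi_1(T_f/V_0))=\langle\langle\gamma\rangle\rangle$ (the core of $V_0$ is isotopic to $\gamma$), and so --- $K(N_k,1)$ being aspherical --- factors up to homotopy through the collapse $q\colon T_f\to T_f/V_0$. (iii) Likewise $i\circ\phi^\gamma_{f,k}\colon T_f^\gamma\to K(N_k,1)$ kills the core of $V_1$: that core is a longitude of $V_1$, whose class in $\pi_1(T_f^\gamma)$ equals that of $\partial\Sigma_1$ modulo the meridian $\gamma$ (already trivial in $T_f^\gamma$), and hence maps to $i_*(\Omega)=0$; so it factors up to homotopy through $q'\colon T_f^\gamma\to T_f^\gamma/V_1$.

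To conclude I would identify the collapsed spaces and the images of the fundamental classes. Since $T_{f,1}\cap V_0=\partial T_{f,1}$, one has $T_f/V_0=T_{f,1}/\partial T_{f,1}$, and likewise $T_f^\gamma/V_1=T_{f,1}/\partial T_{f,1}$. The map $T_{f,1}/\partial T_{f,1}\to K(N_k,1)$ induced from $\phi_{f,k}$ is induced by $\phi_{f,k}|_{T_{f,1}}$, which on $\pi_1$ sends (the class of) $\alpha_i$ to $[\alpha_i]\in N_k$ and $\gamma$ to $1$ --- the same homomorphism as $\psi$, so this map is homotopic to $\bar\psi$; and the map induced from $i\circ\phi^\gamma_{f,k}$ is induced by $(i\circ\phi^\gamma_{f,k})|_{T_{f,1}}=i\circ\phi^1_{f,k}=\psi$ (because $\phi^\gamma_{f,k}$ extends $\phi^1_{f,k}$), hence also homotopic to $\bar\psi$. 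Finally $q_*[T_f]$ and $q'_*[T_f^\gamma]$ are both the image of the relative fundamental class $[T_{f,1},\partial T_{f,1}]$ under the excision isomorphisms $H_3(T_f,V_0)\cong H_3(T_{f,1},\partial T_{f,1})\cong H_3(T_{f,1}/\partial T_{f,1})$ and $H_3(T_f^\gamma,V_1)\cong H_3(T_{f,1},\partial T_{f,1})\cong H_3(T_{f,1}/\partial T_{f,1})$, since a fundamental class restricts to a fundamental class on a codimension-zero submanifold. Chaining these, $\bar\tau_k([f])=\phi_{f,k *}[T_f]=\bar\psi_*[T_{f,1}/\partial T_{f,1}]=i_*\phi^\gamma_{f,k *}[T_f^\gamma]=i_*\bar\tau_{1,k}([f])$.

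The main obstacle is getting items (ii) and (iii) exactly right: one has to be certain the solid-torus pieces $V_0$ and $V_1$ truly disappear on mapping into $K(N_k,1)$, which is where the surface relation enters --- $\partial\Sigma_1$ dies in $\pi_1(\Sigma)$ though not in $\pi_1(\Sigma_1)$ --- and is precisely why this lemma concerns $N_k$ and has no naive $N_{1,k}$-analogue. The remaining point, compatibility of fundamental classes with excision, is routine; if one prefers to avoid it, there is the alternative of building a $4$-manifold cobordism from $T_f$ to $T_f^\gamma$ (the two arise from $T_{f,1}$ by Dehn fillings along slopes of distance one, hence are related by a surgery on $\gamma$, realized by attaching a single $2$-handle to $T_f\times[0,1]$ along $\gamma\times\{1\}$), extending $\phi_{f,k}$ and $i\circ\phi^\gamma_{f,k}$ over it (possible since $\gamma$ is killed), and applying the usual cobordism-invariance argument --- the same method used in \cite{H} to show $\bar\tau_{1,k}$ is a homomorphism.
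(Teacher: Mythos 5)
Your argument is correct, and the main route you take is genuinely different from the paper's. The paper proves the lemma by building an explicit cobordism: it takes $T_f\times[0,1]$, glues on a thickened $S^3$ so that one end of $T_f\times[0,1]$ is surgered from $T_f$ into $T_f^\gamma$, caps the resulting $S^3$ boundary with $B^4$, and verifies that $\phi_{f,k}$ and $i\circ\phi^\gamma_{f,k}$ extend over the resulting $4$-manifold $\hat W$ (by deformation retracting $\hat W$ to $T_f$ with a $2$-cell attached along $\gamma$); cobordism invariance of the fundamental class then finishes the proof. You instead collapse the two solid tori $V_0\subset T_f$ and $V_1\subset T_f^\gamma$, identify both quotients with $T_{f,1}/\partial T_{f,1}$, check that the maps to $K(N_k,1)$ factor through the quotients and agree there, and compare the images of $[T_f]$ and $[T_f^\gamma]$ with the relative fundamental class $[T_{f,1},\partial T_{f,1}]$ via excision. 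Your approach has the virtue of working entirely in dimension three with no auxiliary $4$-manifold, at the price of being careful with collapsed quotients and relative classes; the paper's approach is closer in spirit to the cobordism arguments used in \cite{H} and requires less fuss about quotient spaces, at the price of constructing and analyzing $\hat W$. Both proofs turn on exactly the same algebraic fact, which you isolate cleanly: $\gamma$ dies by definition, and $\partial\Sigma_1=[a_1,b_1]\cdots[a_g,b_g]$ dies in $N_k$ (but not in $N_{1,k}$) by the surface relation, which is what lets both solid-torus pieces vanish on mapping into $K(N_k,1)$ and, in the paper's version, what lets $\Phi$ descend from $\pi_1(T_{f,1})$. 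Your concluding paragraph correctly identifies the cobordism alternative --- attaching a single $2$-handle to $T_f\times[0,1]$ along $\gamma\times\{1\}$ --- which is essentially the paper's construction stated more economically.
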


\begin{proof}
The proof is by construction of a cobordism between $T_{f}^\gamma$ and $T_{f}$ over the group $N_k$.  Construct the cobordism as follows:  Let $X=T_{f}\times[0,1]$, which has two copies of $T_{f}$ as its boundary.  Now we glue $X$ and a thickened $S^3$, (which decomposes as the union of two solid tori thickened) $S^3\times[0,1]=(B^2\times S^1\cup S^1\times B^2)\times[0,1]$, along pieces of their boundaries.
\begin{figure}[h!tp]
\centering
\begin{picture}(195,190)(0,0)
\subfigure[The manifold $X$.]
{\includegraphics[width=.48\textwidth]{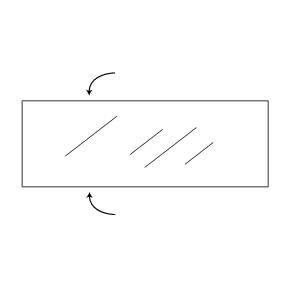}}
\put(-120,51){\small$T_f\times\{0\}$}
\put(-120,155){\small$T_f\times\{1\}$}
\end{picture}
\hspace{.02\textwidth}
\begin{picture}(195,190)(0,0)
\subfigure[{Decomposition of $S^3\times [0,1]$.}]
{\includegraphics[width=.48\textwidth]{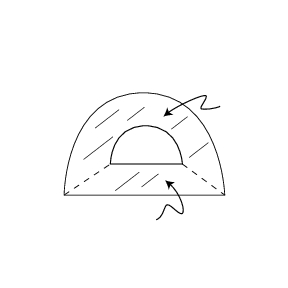}}
\put(-136,47){\small$B^2\times S^1$}
\put(-48,131){\small$S^1\times B^2$}
\end{picture}
\vspace{1cm}
\caption{Schematic of the spaces $X$ and $S^3\times[0,1]$.}
\end{figure}
Identify in the top copy $T_{f}\times\{1\}$ the solid torus $(D\times S^1)\times\{1\}\subset T_{f}\times\{1\}$ with the solid torus $(B^2\times S^1)\times\{0\}\subset S^3\times\{0\}$ in the bottom copy of $S^3$, by gluing the longitude $\{x_0\}\times S^1$ along the longitude $\{b_0\}\times S^1$ (for some $b_0\in\partial B^2$) .
\begin{figure}[h!tp]
\centering
\begin{picture}(435,200)(0,0)
\includegraphics[width=.98\textwidth]{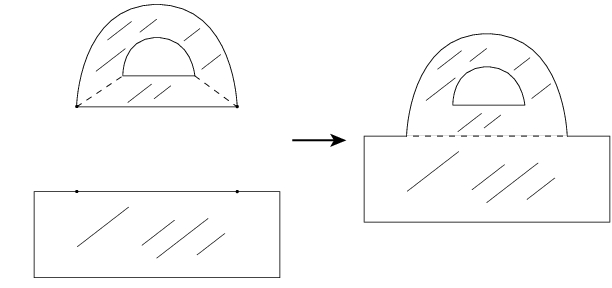}
\put(-378,70){$\overbrace{\qquad\qquad\qquad\qquad\quad \phantom{a}}$}
\put(-353,85){\footnotesize$(D\times S^1)\times\{1\}$}
\end{picture}
\caption{Obtaining $W$ by gluing $X$ and $S^3\times[0,1]$ together.}
\end{figure}
Call the resulting 4-manifold $W$.  $W$ has 3 boundary components homeomorphic to $T_{f}$, $T_{f}^\gamma$, and $S^3$ respectively.  The desired cobordism $\hat W$ is obtained by capping the $S^3$ boundary component off with a 4-ball.
\begin{figure}[h!tp]
\centering
\begin{picture}(195,190)(0,0)
\subfigure[Schematic of $\partial W$.]
{\includegraphics[width=.48\textwidth]{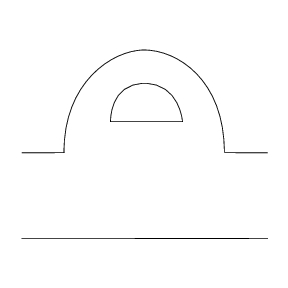}}
\put(-70,167){\tiny$\big(T_f-(D\times S^1)\big)\cup(S^1\times B^2)\cong T_f^\gamma$}
\put(-115,110){\small$S^3\times\{1\}$}
\put(-115,42){\small$T_f\times\{0\}$}
\end{picture}
\hspace{.04\textwidth}
\begin{picture}(195,190)(0,0)
\subfigure[Schematic of $\hat W$.]
{\includegraphics[width=.48\textwidth]{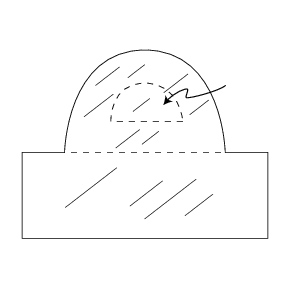}}
\put(-37,147){$B^4$}
\end{picture}
\vspace{0.5cm}
\caption{The manifolds $\partial W$ and $\hat W$.}
\end{figure}Now one needs only to check that there is a map $\Phi:\pi_1(\hat W)\to N_{k}$, and a commutative diagram
\[
\begin{diagram}
\pi_1(T_{f}^\gamma)&                          &           \\
\dTo^{i_{1*}} &\rdTo^{\phi^\gamma_{f,k}} &            \\
\pi_1(\hat W)   & \rDashto^{\Phi }            &N_{k}    \\
\uTo^{i_{0*}} &\ruTo^{\phi_{f,k}}    &              \\
\pi_1(T_{f})   &                           &
\end{diagram}
\]
with $i_0$ and $i_1$ the appropriate inclusions.  This is not difficult to see.  Since $W$ and $\hat W$ have the same fundamental group we work with $W$.  $X$ deformation retracts to $T_{f}\times\{1\}$ and likewise $S^3\times[0,1]$ deformation retracts to $S^3\times\{0\}$.  
\begin{figure}[h!tp]
\centering
\begin{picture}(435,200)(0,0)
{\includegraphics[width=.98\textwidth]{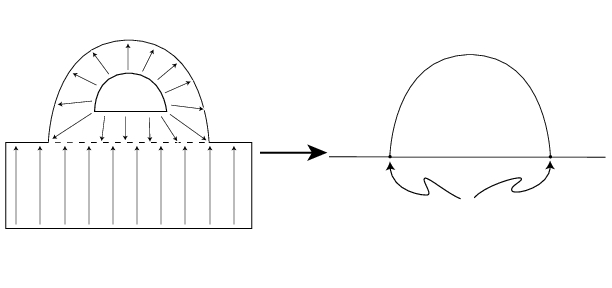}}
\put(-124,52){\small$\partial D\times S^1$}
\end{picture}
\caption{A deformation retraction of $W$.}
\end{figure}
Then removing a small ball from the interior of the second solid torus $S^1\times B^2$ that makes up $S^3$ (which does not affect fundamental group) we can further deformation retract to $T_{f}\times\{1\}$ union a disk, where the disk is glued in along $\gamma=[\{x_0\}\times S^1]$, yielding the desired result. 

\end{proof}

\begin{lemma}\label{lemma3}
The diagram
\[
\begin{diagram}
H_3(N_{1,k})              & \rTo^{i_*}  &H_3(N_{k}) \\
\dTo_{d^2_1}              & \qquad\qquad& \dTo_{d^2}    \\
H_1(\Sigma_1) \otimes \Lie_{1,k}& \rTo^{i_*}  &H_1(\Sigma) \otimes \Lie_{k}
\end{diagram}
\]
commutes
\end{lemma}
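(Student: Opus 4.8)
The plan is to exhibit the square as the $d^2$-part of a morphism of homology Lyndon--Hochschild--Serre spectral sequences arising from a morphism between the two defining central extensions.

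First I would check that $i\colon\Sigma_1\hookrightarrow\Sigma$ induces a morphism of central extensions from $0\to\Lie_{1,k}\to N_{1,k+1}\to N_{1,k}\to1$ to $0\to\Lie_k\to N_{k+1}\to N_k\to1$. By functoriality of the lower central series, $i_*\colon\pi_1(\Sigma_1)\to\pi_1(\Sigma)$ carries $\pi_1(\Sigma_1)_m$ into $\pi_1(\Sigma)_m$ for every $m$, hence descends to group homomorphisms $N_{1,k}\to N_k$ and $N_{1,k+1}\to N_{k+1}$ compatible with the quotient maps $N_{1,k+1}\twoheadrightarrow N_{1,k}$ and $N_{k+1}\twoheadrightarrow N_k$. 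Since $\ker(N_{1,k+1}\to N_{1,k})=\pi_1(\Sigma_1)_k/\pi_1(\Sigma_1)_{k+1}=\Lie_{1,k}$ and likewise $\ker(N_{k+1}\to N_k)=\Lie_k$, the induced map restricts on kernels to the degree-$k$ piece of the map on lower-central-series quotients, which by Lemma~\ref{labute1} is exactly the map $i_*\colon\Lie_{1,k}\to\Lie_k$ of the statement (the degree-$k$ part of the canonical epimorphism $\Lie_{(1)}\to\Lie_{(1)}/\mathfrak I=\Lie$). Both rows are central by construction.

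Then I would invoke naturality of the homology Lyndon--Hochschild--Serre spectral sequence: a morphism of group extensions induces a morphism of the associated spectral sequences that commutes with every differential. Applied to the morphism above, the induced map of $E^2$-pages commutes with $d^2_1$ and $d^2$. Because both extensions are central, $E^2_{p,q}=H_p(\text{quotient})\otimes H_q(\text{kernel})$, and for the two terms in play no $\mathrm{Tor}$ corrections appear: $E^2_{3,0}=H_3(N_{1,k})$ (resp.\ $H_3(N_k)$), and $E^2_{1,1}=H_1(N_{1,k})\otimes\Lie_{1,k}$ (resp.\ $H_1(N_k)\otimes\Lie_k$), since $H_0(-)=\z$ while $H_1(\Lie_{1,k})=\Lie_{1,k}$ and $H_1(\Lie_k)=\Lie_k$ are free abelian. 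For $k\ge2$, $\pi_1(\Sigma_1)_k$ and $\pi_1(\Sigma)_k$ lie in the respective commutator subgroups, so $H_1(N_{1,k})=H_1(\Sigma_1)$ and $H_1(N_k)=H_1(\Sigma)$. On $E^2_{3,0}$ the induced map of spectral sequences is the top arrow $i_*\colon H_3(N_{1,k})\to H_3(N_k)$, and on $E^2_{1,1}$ it is $i_*\otimes i_*$, which under these identifications is the bottom arrow $i_*\colon H_1(\Sigma_1)\otimes\Lie_{1,k}\to H_1(\Sigma)\otimes\Lie_k$ of the statement. Commutativity of the square then follows.

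The hard part will be the first step: verifying that the morphism of extensions is genuinely well defined and, in particular, that its restriction to the kernels is the advertised map $i_*\colon\Lie_{1,k}\to\Lie_k$. This is a diagram chase combining the third isomorphism theorem with Lemma~\ref{labute1}, routine but requiring care in matching the two identifications of the kernels with graded pieces of the Lie rings. Naturality of the Lyndon--Hochschild--Serre spectral sequence and the vanishing of the relevant $\mathrm{Tor}$ terms, by contrast, can be quoted off the shelf.
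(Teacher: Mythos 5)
Your proposal is correct and follows essentially the same route as the paper: exhibit $i$ as inducing a morphism of central extensions, conclude a morphism of Hochschild--Serre spectral sequences, and read off commutativity at $E^2$. The only real difference is that the paper verifies naturality explicitly at the chain level (checking that $i_*$ respects the filtration on the normalized bar complexes and commutes with the bar differential) whereas you quote it off the shelf; your remarks that the $\mathrm{Tor}$ corrections vanish and that $H_1(N_{1,k})\cong H_1(\Sigma_1)$ for $k\ge2$ are useful details the paper leaves implicit, though the appeal to Lemma~\ref{labute1} for identifying the kernel map is unnecessary, since functoriality of the lower central series already gives that the restriction to kernels is the degree-$k$ piece of $i_*\colon\Lie_{(1)}\to\Lie$.
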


\begin{proof}
More generally we show that inclusion, $i:\Sigma_1\hookrightarrow\Sigma$, induces a morphism of spectral sequences.  As in \cite{abelian} we let
\begin{align*}
C_{1,*}&=\bigoplus_{n\ge0}C_n(N_{1,k+1})\quad\textrm{and} \\
C_*&=\bigoplus_{n\ge0}C_n(N_{k+1})
\end{align*}
be the normalized chain complexes for the groups $N_{1,k+1}$ and $N_{k+1}$ respectively.  That is, $C_{1,n}$ is the free abelian group generated by $n$-tuples of elements of $N_{1,k+1}$ modulo the relation that tuples containing the identity are zero.  The group ring $\z N_{1,k+1}$ acts component-wise: $g(\gamma_1,\ldots,\gamma_n)=(g\gamma_1,\ldots,g\gamma_n)$, and similarly for $C_*$.  The differentials $d^1:C_{1,n}\to C_{1,n-1}$ and $d:C_n\to C_{n-1}$ are described below.  We filter $C_{1,*}$ by  letting $C_{1,n}^j$ be the submodule of $C_{1,n}(N_{1,k+1})$ generated by $n$-tuples $(\gamma_1,\ldots,\gamma_n)$ where at least $(n-j)$ of the elements $\gamma_i$ belong to the subgroup $\Lie_{1,k}\subset N_{1,k+1}$.  We filter $C_*$ in exactly the analogous way.  As in \cite{abelian} p.\ 704, and \cite{HS} we may assume that the spectral sequences $(E^r_{p,q}(N_{1,k+1}), d^r_1)$ and $(E^r_{p,q}(N_{k+1}), d^r)$ are the ones associated to these explicit filtrations.  Note that $C_{1,*}$ and $C_*$ are modules over different rings, namely $\z N_{1,k+1}$ and $\z N_{k+1}$ respectively.  On the other hand we are only interested in the objects $E^r_{p,q}(N_{1,k+1})$ and $E^r_{p,q}(N_{k+1})$ as \emph{abelian groups}.  Therefore we view $C_{1,*}$ and $C_*$ as $\z$-modules.  Now we only need to check that $i_*:C_{1,*}\to C_*$ is a morphism of filtered graded differential $\z$-modules and it follows that $i_*:E^r_{p,q}(N_{1,k+1})\to E^r_{p,q}(N_{k+1})$ is a morphism of spectral sequences and in particular that $i_*$ commutes with the differentials $d^2_1$ and $d^2$ as in the above diagram (see \cite{McC} p.\ 48).
First $i:\Sigma_1\hookrightarrow\Sigma$ induces a map of group extensions:
\begin{diagram}
0 & \rTo & \Lie_{1,k} & \rTo & N_{1,k+1}  & \rTo & N_{1,k}    & \rTo & 1 \\
  &      & \dTo_{i_*} &      & \dTo_{i_*} &      & \dTo_{i_*} &      &   \\
0 & \rTo & \Lie_{k}   & \rTo & N_{k+1}    & \rTo & N_{k}      & \rTo & 1 
\end{diagram}
The induced map $i_*:C_{1,*}\to C_{*}$ given by $i_*(\gamma_1,\ldots,\gamma_n)=(i_*(\gamma_1),\ldots,i_*(\gamma_n))$ is obviously a morphism of graded $\z$-modules.  We must check that $i_*$ commutes with the differentials $d^1$ and $d$ in the sense that the following diagram commutes:
\[
\begin{diagram}
C_{1,n}          & \rTo^{i_*}   &C_{n} \\
\dTo_{d^1}       & \qquad\qquad & \dTo_{d}    \\
C_{1,n-1}        & \rTo^{i_*}   &C_{n-1}
\end{diagram}
\]
As in \cite{brown} p.\ 19 and \cite{HS} p.\ 118 the boundary operator $d^1$ is defined by:
\begin{align*}
d^1(\gamma_1,\ldots,\gamma_n)=&\gamma_1(\gamma_2,\ldots,\gamma_n)\\
                            &+\sum_{i=1}^{n-1}(-1)^i(\gamma_1,\ldots,\gamma_i\gamma_{i+1},\ldots,\gamma_n)\\
                            &+(-1)^{n}(\gamma_1,\ldots,\gamma_{n-1})
\end{align*}
and similarly for $d$.  It is routine to check that $i_*\circ d^1=d\circ i_*$.  It only remains to check that $i_*$ respects the filtrations.  We need $i_*(C_{1,n}^j)\subset C_{n}^j$.  This is obvious from the definition of the filtrations since $i_*(\Lie_{1,k})\subset\Lie_k$.  Thus, inclusion does indeed induce a morphism of the appropriate filtered graded $\z$-modules and hence also induces a morphism of the spectral sequences as desired.
\end{proof}

In the above proof we have actually shown:

\begin{lemma}\label{morphism}
A morphism of central extensions
\[
\begin{diagram}
1&\rTo&H        &\rTo&G        &\rTo&K        &\rTo&1\\
 &    &\dTo^\phi&    &\dTo^\phi&    &\dTo^\phi&    & \\
1&\rTo&H'       &\rTo&G'       &\rTo&K'       &\rTo&1      
\end{diagram}
\]
induces a morphism of Hochschild-Serre spectral sequences.  In particular, from the second page of the spectral sequences we have the commutative diagram:
\[
\begin{diagram}
H_p(K)\otimes H_q(H)        &\rTo^{\phi_*}&H_p(K')\otimes H_q(H') \\
\dTo^{d^2}                  &             & \dTo^{{d'}^2}         \\
H_{p-2}(K)\otimes H_{q+1}(H)&\rTo^{\phi_*}&H_{p-2}(K')\otimes H_{q+1}(H')
\end{diagram}
\]
\end{lemma}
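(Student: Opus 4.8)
The plan is to reprise, in this abstract setting, exactly the argument already carried out in the proof of Lemma~\ref{lemma3}. Given the morphism of central extensions, form the normalized (bar) chain complexes $C_* := \bigoplus_{n\ge0} C_n(G)$ and $C'_* := \bigoplus_{n\ge0} C_n(G')$ computing $H_*(G)$ and $H_*(G')$, where $C_n(G)$ is the free abelian group on $n$-tuples of elements of $G$ modulo the relation that tuples containing the identity vanish. Filter $C_*$ by letting $C_n^j$ be the submodule generated by those $n$-tuples $(\gamma_1,\ldots,\gamma_n)$ at least $n-j$ of whose entries lie in the central subgroup $H\subset G$, and filter $C'_*$ analogously; as in \cite{abelian} and \cite{HS} the Hochschild--Serre spectral sequences of the two extensions may be taken to be those associated with these explicit filtrations.

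Next I would observe that $\phi:G\to G'$ induces a map $\phi_*:C_*\to C'_*$ acting entrywise on tuples, $\phi_*(\gamma_1,\ldots,\gamma_n)=(\phi(\gamma_1),\ldots,\phi(\gamma_n))$, and check that this is a morphism of filtered graded differential $\z$-modules: it is visibly graded; it commutes with the bar differential by inspection of the explicit formula for $d^1$ recalled in the proof of Lemma~\ref{lemma3}; and it respects the filtrations because $\phi(H)\subset H'$, so an $n$-tuple with at least $n-j$ entries in $H$ maps to one with at least $n-j$ entries in $H'$, i.e.\ $\phi_*(C_n^j)\subset {C'}_n^{\,j}$. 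A morphism of filtered differential graded $\z$-modules induces a morphism of the associated spectral sequences, compatible with every differential $d^r$ (see \cite{McC} p.\ 48); this is the first assertion. For the displayed diagram on the second page, centrality of the two extensions forces the conjugation action of $K$ (resp.\ $K'$) on $H_q(H)$ (resp.\ $H_q(H')$) to be trivial, so that $E^2_{p,q}=H_p(K)\otimes H_q(H)$ exactly as for the extensions appearing earlier; the morphism of spectral sequences then restricts on $E^2$ to $\phi_*\otimes\phi_*$ and commutes with $d^2:E^2_{p,q}\to E^2_{p-2,q+1}$, which is precisely the claimed square.

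I do not expect a genuine obstacle here, since Lemma~\ref{lemma3} is the special case $H\hookrightarrow G$ equal to $\Lie_{1,k}\hookrightarrow N_{1,k+1}$ mapping to $\Lie_k\hookrightarrow N_{k+1}$. The one point worth flagging -- the same one noted in the proof of Lemma~\ref{lemma3} -- is that $C_*$ and $C'_*$ are modules over the different group rings $\z G$ and $\z G'$, so $\phi_*$ is not linear over a common ring; but since the spectral sequences are needed only as sequences of \emph{abelian groups}, it suffices to regard $C_*$, $C'_*$ and $\phi_*$ over $\z$, and then the comparison of spectral sequences applies verbatim. The verification $\phi_*\circ d^1 = d\circ\phi_*$ from the bar formula is routine.
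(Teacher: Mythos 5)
Your proposal is correct and is essentially identical to the paper's own treatment: the paper proves Lemma~\ref{morphism} by simply observing that the argument just given for Lemma~\ref{lemma3} (bar complex, explicit filtration by the central subgroup, entrywise map of chains, comparison of filtered differential graded $\z$-modules) was already fully general, and you have reprised that argument verbatim in the abstract setting, including the same remark about viewing the chain complexes over $\z$ rather than over the two different group rings. The only small addition you make — spelling out that centrality trivializes the $K$-action on $H_q(H)$ so that $E^2_{p,q}=H_p(K)\otimes H_q(H)$ — is consistent with what the paper states earlier in Section~\ref{moritahom} and does not change the approach.
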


The above lemmas combine to give the following:

\begin{proof}[Proof of proposition~\ref{diagram}]
We have the diagram
$$
\begin{diagram}
    &\qquad\qquad\qquad\qquad\qquad&                                &         &H_3(N_{k})   \\
    &                                                &                                &\ruTo^{i_*}\ruTo(4,3)^{\bar\tau_k }&   \\
    &                                                &H_3(N_{1,k})              &       &                 \\
\J(k)&\ruTo(2,1)_{\bar\tau_{1,k} }            &\dTo_{d_1^2}             &      &\dTo_{d^2}  \\
   &\rdTo(2,1)^{\tau_{1,k}}\rdTo(4,3)_{\tau_k}&H_1(\Sigma_1) \otimes \Lie_{1,k}&     &                \\
    &                                                &                                & \rdTo^{i_*}&                 \\
    &                                                &                                &     &H_1(\Sigma) \otimes \Lie_{k}
\end{diagram}
$$
Thinking of this diagram as a graph in the plane (groups as vertices and homomorphisms as edges), the graph cuts the plane into 4 bounded regions (3 triangles and one quadrilateral) and one unbounded region.  Lemmas~\ref{heap}, \ref{lemma1}, \ref{lemma2} and \ref{lemma3} have shown that each of the bounded regions commutes.  It follows that the unbounded region also commutes.
\end{proof}
\end{chapter}

\begin{chapter}{ Handlebodies and $\tau_k$}\label{handlebodiesandtau}

\section{The behavior of $\tau_k$ on homeomorphisms that extend to handlebodies}\label{sec:behavior}
Let $j:\Sigma\hookrightarrow\h$ be a handlebody bounded by $\Sigma$.  We use the notation $\bar\Lie_{k}:=\Lie_k(\pi_1(\h))=\pi_1(\h)_k/\pi_1(\h)_{k+1}$ and $\bar N_k:=\pi_1(\h)/\pi_1(\h)_{k}$.  There is an inclusion induced map $j_*:H_1(\Sigma) \otimes \Lie_{k}\to H_1(\h) \otimes \bar\Lie_{k}$.  We have: 

\begin{proposition}\label{ker}
If the homeomorphism $[f]\in\J(k)$ extends to the handlebody $\h$, then $j_*\circ\tau_k[f]=0\in H_1(\h) \otimes \bar\Lie_{k}$.
\end{proposition}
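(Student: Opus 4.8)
The plan is to exploit the factorization $\tau_k = \eta^{-1}\circ\sigma$ together with the refinement provided by the Morita--Heap homomorphism $\bar\tau_k$ and the commutative diagram of Proposition~\ref{diagram}. The key geometric input is this: if $F:\h\to\h$ extends $f$, then the mapping torus $T_f$ of $f$ bounds the mapping torus $T_F := \h\times[0,1]/(x,0)\sim(F(x),1)$, which is a compact oriented $4$-manifold with $\partial T_F = T_f$. First I would record that the inclusion $j:\Sigma\hookrightarrow\h$, being $\pi_1$-surjective (every loop in $\h$ is homotopic into the boundary), induces a commutative square of central extensions relating $0\to\Lie_k\to N_{k+1}\to N_k\to1$ to $0\to\bar\Lie_k\to\bar N_{k+1}\to\bar N_k\to1$; by Lemma~\ref{morphism} this gives a morphism of Hochschild--Serre spectral sequences, hence a commutative square
\[
\begin{diagram}
H_3(N_k)            & \rTo^{j_*} & H_3(\bar N_k)\\
\dTo_{d^2}          &            & \dTo_{\bar d^2}\\
H_1(\Sigma)\otimes\Lie_k & \rTo^{j_*} & H_1(\h)\otimes\bar\Lie_k
\end{diagram}
\]
so that $j_*\circ\tau_k[f] = j_*\circ d^2\circ\bar\tau_k[f] = \bar d^2\circ j_*\circ\bar\tau_k[f]$, where I am using Proposition~\ref{diagram} to write $\tau_k = d^2\circ\bar\tau_k$.

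It therefore suffices to show that $j_*\circ\bar\tau_k[f] = 0$ in $H_3(\bar N_k)$. Here $\bar\tau_k[f] = \phi_{f,k*}[T_f]$, where $\phi_{f,k}:T_f\to K(N_k,1)$ induces the canonical epimorphism $\pi_1(T_f)\twoheadrightarrow N_k$; composing with $j_*$, the class $j_*\circ\bar\tau_k[f]$ is the image of the fundamental class $[T_f]$ under a map $\psi: T_f\to K(\bar N_k,1)$ inducing $\pi_1(T_f)\to\pi_1(T_f)/\langle\gamma,\pi_1(T_f)_k\rangle\to \bar N_k = \pi_1(\h)/\pi_1(\h)_k$, which is (up to the already-known identifications) induced by $j$ on the surface fibre. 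The point is that this composite $\pi_1(T_f)\to\bar N_k$ factors through $\pi_1(T_F)$: the inclusion $T_f\hookrightarrow T_F$ induces $\pi_1(T_f)\to\pi_1(T_F)$, and since $T_F$ fibres over $S^1$ with fibre $\h$ and monodromy $F$, one has $\pi_1(T_F)/\pi_1(T_F)_k \cong \pi_1(\h)/\pi_1(\h)_k = \bar N_k$ exactly because $F_*$ acts trivially on $\bar N_k$ (as $[f]\in\J'(k)$ and $F$ extends $f$, so $F_*$ on $\pi_1(\h)$ is compatible with $f_*$ on $\pi_1(\Sigma)$ under the surjection $j_*$). Thus $\psi:T_f\to K(\bar N_k,1)$ extends to $\Psi:T_F\to K(\bar N_k,1)$, and then
\[
j_*\circ\bar\tau_k[f] = \psi_*[T_f] = \Psi_*\bigl(i_*[\partial T_F]\bigr) = \Psi_*\bigl(\partial[T_F,\partial T_F]\bigr) = 0,
\]
since a boundary maps to zero in the homology of any space.

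The step I expect to be the main obstacle is the careful bookkeeping of fundamental groups: verifying that $\pi_1(T_F)/\pi_1(T_F)_k \cong \bar N_k$ and that the resulting map agrees with $j_*\circ\phi_{f,k}$ on $\pi_1(T_f)$. Concretely, $\pi_1(T_F)$ has the HNN-type presentation $\langle\pi_1(\h),\gamma \mid \gamma x\gamma^{-1} = F_*(x)\rangle$, and I must check that killing $\gamma$ and the $k$-th lower central subgroup leaves $\pi_1(\h)/\pi_1(\h)_k$ — this uses that $F_*$ is trivial modulo $\pi_1(\h)_k$, which follows because $f\in\J(k)\subset\J'(k)$ acts trivially on $N_k$ and $j_*:\pi_1(\Sigma)\to\pi_1(\h)$ is surjective and intertwines $f_*$ with $F_*$. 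I would also need to confirm that the various identifications ($H_3$ of a group versus $H_3$ of its $K(\pi,1)$, the longitude/meridian conventions in passing between $T_{f,1}$, $T_f^\gamma$ and $T_f$) are consistent with the conventions already fixed in Section~\ref{moritahom}; once that is in place the argument above closes off cleanly.
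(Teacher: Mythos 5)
Your proposal is correct and follows essentially the same route as the paper: factor $\tau_k = d^2\circ\bar\tau_k$ via Proposition~\ref{diagram}, use Lemma~\ref{morphism} to transport the spectral-sequence square across $j_*$, verify (this is the paper's Lemma~\ref{Ftorus}) that $F_*$ is trivial on $\bar N_k$ so that the classifying map $T_f\to K(\bar N_k,1)$ extends over the $4$-manifold $T_F$, and conclude because $T_f=\partial T_F$ kills the fundamental class in $H_3(T_F)$. The only cosmetic slip is the equality $\Psi_*(i_*[\partial T_F])=\Psi_*(\partial[T_F,\partial T_F])$, which mixes $H_3(T_F)$ with $H_3(\partial T_F)$; what you want is simply $i_*[\partial T_F]=i_*\partial[T_F,\partial T_F]=0$ in $H_3(T_F)$ by exactness of the pair sequence, which is exactly how the paper closes the argument.
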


It immediately follows that:

\begin{corollary}\label{robust}
Let $[f]\in\J(k)$.
If for every handlebody $j:\Sigma\hookrightarrow\h$ bounded by $\Sigma$ we have $j_*\circ\tau_k[f]\ne0$, then $f$ does not extend to any handlebody.
\end{corollary}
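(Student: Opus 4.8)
The plan is to prove Proposition~\ref{ker}; Corollary~\ref{robust} follows from it at once, since by Definition~\ref{some} a homeomorphism that extends to \emph{some} handlebody extends to some handlebody $j:\Sigma\hookrightarrow\h$ bounded by $\Sigma$, and Proposition~\ref{ker} would then force $j_*\circ\tau_k[f]=0$, contrary to the corollary's hypothesis.

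For Proposition~\ref{ker}, fix a handlebody $j:\Sigma\hookrightarrow\h$ and an extension $F:\h\to\h$ of $f\in\J(k)$. I would start by recording two standard facts about $\h$. First, $j_*:\pi_1(\Sigma)\to\pi_1(\h)$ is onto with kernel $N$, the normal closure of the meridian curves; hence by Proposition~\ref{prop:Liekernel} the induced map $\Lie_k\to\bar\Lie_k$ is onto with kernel $M_k:=\bar\Lie_k(N)$. Second, on first homology the kernel $L$ of $j_*:H_1(\Sigma)\to H_1(\h)$ is Lagrangian (``half lives, half dies''), so it is a direct summand that can be completed to a symplectic basis $e_1,\dots,e_g,f_1,\dots,f_g$ of $H_1(\Sigma)$ with $L=\langle e_1,\dots,e_g\rangle$.

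Next I would reduce the statement to $\pi_1$-level bookkeeping. Since $\tau_k=\eta^{-1}\circ\sigma$ and $\eta$ is built only from $\omega$, the class $\tau_k[f]\in H_1(\Sigma)\otimes\Lie_k$ is basis-independent, and in the basis above it equals $\sum_i\big(e_i\otimes\sigma([f])(f_i)-f_i\otimes\sigma([f])(e_i)\big)$, where $\sigma([f])([x])=[f_*(x)x^{-1}]$. Applying $j_*\otimes j_*$ annihilates the terms $e_i\otimes(-)$ because $j_*e_i=0$, and since $j_*f_1,\dots,j_*f_g$ form a basis of $H_1(\h)$, the desired vanishing $j_*\circ\tau_k[f]=0$ is equivalent to $j_*\big(\sigma([f])(e_i)\big)=0$ in $\bar\Lie_k$ for every $i$. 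Lifting $e_i$ to $\tilde e_i\in\pi_1(\Sigma)$ (so that $j_*\tilde e_i\in\pi_1(\h)_2$, since $e_i\in L$), this comes down to showing $j_*\big(f_*(\tilde e_i)\tilde e_i^{-1}\big)\in\pi_1(\h)_{k+1}$.

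The extension $F$ enters in this final step. Commutativity of $j_*$ with $f_*$ and $F_*$ gives $j_*\big(f_*(x)x^{-1}\big)=F_*(j_*x)(j_*x)^{-1}$ for all $x$; since $f\in\J(k)\subseteq\J'(k)$ we have $f_*(x)x^{-1}\in\pi_1(\Sigma)_k$, and $j_*(\pi_1(\Sigma)_k)\subseteq\pi_1(\h)_k$, so, $j_*$ being onto, $F_*$ induces the identity on $\bar N_k=\pi_1(\h)/\pi_1(\h)_k$. I would then invoke the ``higher Johnson'' fact: an endomorphism $\psi$ of a group $G$ that is the identity modulo $G_k$ satisfies $\psi(y)y^{-1}\in G_{k+1}$ for all $y\in G_2$ --- this follows from the commutator identities, which give $\psi([g,h])[g,h]^{-1}\in G_{k+1}$, together with the observation that $\{\,y:\psi(y)y^{-1}\in G_{k+1}\,\}$ is a subgroup (normality of $G_{k+1}$) and hence contains $G_2$. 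Applying it to $\psi=F_*$, $G=\pi_1(\h)$, $y=j_*\tilde e_i$ yields exactly $F_*(j_*\tilde e_i)(j_*\tilde e_i)^{-1}\in\pi_1(\h)_{k+1}$, completing the proof. I expect the main obstacle to be precisely this degree shift from $\pi_1(\h)_2$ to $\pi_1(\h)_{k+1}$ (the naive estimate only gives $\pi_1(\h)_k$), and being careful that it is the composite with $j_*$, not $\tau_k[f]$ itself, that vanishes; the symplectic-linear-algebra part is routine. A more topological route, in the spirit of Chapter~\ref{johnsonhom}, would instead use that the mapping torus of $F$ is a compact oriented $4$-manifold bounded by $T_f$ over which the relevant classifying map to $K(\bar N_k,1)$ extends, so $[T_f]$ dies in $H_3(\bar N_k)$; pushing this through the $j$-induced morphism of the Morita--Heap diagrams and applying $d^2$ again gives $j_*\circ\tau_k[f]=0$.
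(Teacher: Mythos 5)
Your derivation of the corollary from Proposition~\ref{ker} is identical to the paper's (an immediate contrapositive). Where you diverge is in the proof of Proposition~\ref{ker} itself. The paper's route is topological, and is precisely the alternative you sketch at the end: using the Morita--Heap refinement $\bar\tau_k$, Proposition~\ref{diagram}, and Lemma~\ref{morphism}, it observes that the mapping torus $T_F$ of the extension bounds $T_f$ over $K(\bar N_k,1)$, so that $j_*\circ\bar\tau_k[f]=0$ in $H_3(\bar N_k)$ and hence $j_*\circ\tau_k[f]=\bar d^2\big(j_*\circ\bar\tau_k[f]\big)=0$. Your primary argument is algebraic and genuinely different: you write $\tau_k[f]$ in a symplectic basis $\{e_i,f_i\}$ adapted to the Lagrangian kernel $L=\ker\big(j_*\colon H_1(\Sigma)\to H_1(\h)\big)$, observe that after applying $j_*$ only the terms $j_*f_i\otimes j_*\big(\sigma([f])(e_i)\big)$ survive, and kill each one because $j_*\tilde e_i\in\pi_1(\h)_2$ while $F_*$ is the identity mod $\pi_1(\h)_k$ by Lemma~\ref{Ftorus}, forcing $F_*(y)y^{-1}\in\pi_1(\h)_{k+1}$ for all $y\in\pi_1(\h)_2$. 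The commutator-identity step you invoke is correct: in $\pi_1(\h)/\pi_1(\h)_{k+1}$ the image of $\pi_1(\h)_k$ is central, so $[\bar g\bar u,\bar h\bar v]=[\bar g,\bar h]$ for central $\bar u,\bar v$, and the set $\{y:\psi(y)y^{-1}\in G_{k+1}\}$ is then a subgroup containing all commutators and hence all of $G_2$. The algebraic route is shorter and renders Sections~\ref{moritahom} and~\ref{propproof} unnecessary for this particular conclusion; the paper's bordism proof is heavier but yields the $H_3$-interpretation of $\tau_k$ via $\bar\tau_k$, which is of independent interest.
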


In order to prove proposition~\ref{ker} we need the following lemma.

\begin{lemma}\label{Ftorus}
If $F:\h\to\h$ is a homeomorphism that restricts to $f:\Sigma\to\Sigma$ on the boundary with $[f]\in\J(k)$, then
$F_*$ acts as the identity on $\pi_1(\h)/\pi_1(\h)_k$.
\end{lemma}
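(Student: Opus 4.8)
The plan is to exploit the fact that the homeomorphism $F:\h\to\h$ induces an automorphism of $\pi_1(\h)$ that is compatible, via the inclusion $j:\Sigma\hookrightarrow\h$, with the action of $f$ on $\pi_1(\Sigma)$ (and hence on $\pi_1(\Sigma_1)$). First I would recall that for a handlebody $\h$ of genus $g$, the group $\pi_1(\h)$ is free of rank $g$, and that the inclusion induced map $j_*:\pi_1(\Sigma)\to\pi_1(\h)$ is surjective. Moreover, by thinking of $\Sigma_1=\Sigma-\mathrm{int}(D)$, the composite $\pi_1(\Sigma_1)\to\pi_1(\Sigma)\to\pi_1(\h)$ is also surjective. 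So we have a commutative square: $f'_*$ on $\pi_1(\Sigma_1)$ sitting over $F_*$ on $\pi_1(\h)$, with the vertical maps the (surjective) inclusion-induced ones.

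The key step is then the following. Fix $x\in\pi_1(\h)$ and choose a preimage $x_1\in\pi_1(\Sigma_1)$ under the surjection $\pi_1(\Sigma_1)\twoheadrightarrow\pi_1(\h)$. Since $[f]\in\J(k)$, we have $f'_*(x_1)x_1^{-1}\in\pi_1(\Sigma_1)_k$ by definition of the Johnson filtration. Pushing this down to $\pi_1(\h)$ via the inclusion-induced surjection and using commutativity of the square, $F_*(x)x^{-1}$ is the image of $f'_*(x_1)x_1^{-1}$; since the terms of the lower central series are verbal subgroups (so inclusion-induced maps send $\pi_1(\Sigma_1)_k$ into $\pi_1(\h)_k$), we conclude $F_*(x)x^{-1}\in\pi_1(\h)_k$ for every $x$. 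This says precisely that $F_*$ descends to the identity on $\pi_1(\h)/\pi_1(\h)_k=\bar N_k$, which is the assertion.

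The one point that needs care — and which I expect to be the main (if modest) obstacle — is justifying that the composite $\pi_1(\Sigma_1)\to\pi_1(\h)$ is surjective and that the square with $f'_*$ and $F_*$ genuinely commutes, given the basepoint conventions of Section~\ref{sec:model}: everything is based at $x_0\in\partial D$, and $F$ need not fix $x_0$, so strictly one should either arrange $F$ to fix $x_0$ (possible up to isotopy) or keep track of a path correcting the basepoint, noting that conjugation by a loop acts trivially on the quotient statement only after checking it does not interfere. Once the basepoint bookkeeping is settled, surjectivity of $\pi_1(\Sigma)\twoheadrightarrow\pi_1(\h)$ is standard (a handlebody deformation retracts onto a wedge of circles realized by curves on $\Sigma$), and the argument above goes through verbatim.
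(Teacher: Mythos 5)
Your argument is correct and is essentially the paper's proof: push $x\in\pi_1(\h)$ up to a preimage via the surjective inclusion-induced map, use the defining condition on $\J(k)$ to get the lifted difference into the $k$-th term of the lower central series, and push back down using verbality of the lower central series. The only (immaterial) difference is that you route through $\pi_1(\Sigma_1)$ and the definition of $\J(k)$ directly, whereas the paper routes through $\pi_1(\Sigma)$ and invokes $\J(k)\subset\J'(k)$; also, your worry about basepoints is unnecessary here, since representatives of classes in $\Mod(\Sigma,D)$ fix $D$ (hence $x_0$) pointwise, so $F$ automatically fixes $x_0$.
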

\begin{proof}
There is a commutative diagram,
\[
\begin{diagram}
\pi_1(\Sigma)      & \rTo^{f_*}  & \pi_1(\Sigma)            \\
\dTo_{j_*} &                  & \dTo_{j_*}             \\
\pi_1(\h)     & \rTo^{F_*}   &\pi_1(\h)
\end{diagram}
\]
in which $j_*$ is surjective.  As in Proposition~\ref{prop:altfilt}, we need to check that $F_*(x)x^{-1}\in\pi_1(\h)_k$ for all $x\in\pi_1(\h)$.  Let $\bar x\in\pi_1(\Sigma)$ such that $j_*(\bar x)=x$.  Since $[f]\in\J(k)\subset\J'(k)$ we have $f_*(\bar x)\bar x^{-1}\in\pi_1(\Sigma)_k$.  Thus, $F_*(x)x^{-1}=F_*(j_*(\bar x))j_*(\bar x)^{-1}=j_*\circ f_*(\bar x)j_*(\bar x^{-1})=j_*(f_*(\bar x)\bar x^{-1})\in j_*(\pi_1(\Sigma)_k)\subset\pi_1(\h)_k$.
\end{proof}
We have the mapping torus: $T_F:=\h\times[0,1]/(x,0)\sim(F(x),1)$.  It follows from Lemma~\ref{Ftorus} that
\[
\frac{\pi_1(T_F)}{\pi_1(T_F)_k}\cong \bar N_{k}\times\z.
\]
And there is a continuous map $\Phi_{F,k}:T_F\to K(\bar N_{k},1)$ inducing the canonical epimorphism $\pi_1(T_F)\to\bar N_{k}$.
Next we note that there is a Hochschild-Serre spectral sequence associated to the central extension
\[
0\to\bar\Lie_{k}\to\bar N_{k+1}\to\bar N_{k}\to1
\]
which gives a differential $\bar d^2:H_3(\bar N_{k} )\to H_1(\h) \otimes \bar\Lie_{k}$.  We have that
the diagram
\[
\begin{diagram}
H_3(N_{k} )   & \rTo^{j_*}  & H_3(\bar N_{k} )            \\
\dTo_{d^2}            &                  & \dTo_{\bar d^2}             \\
H_1(\Sigma) \otimes \Lie_{k} & \rTo^{j_*}   &H_1(\h) \otimes \bar\Lie_{k}
\end{diagram}
\]
commutes, by Lemma~\ref{morphism}.


\begin{proof}[Proof of Proposition~\ref{ker}]
Suppose that $f:\Sigma\to\Sigma$ extends to a homeomorphism $F:\h\to\h$.  $T_F$ is a 4-manifold with boundary $\partial T_F=T_f$.  Let $I:T_f\hookrightarrow T_F$ be inclusion.  Recall that $\phi_{f,k}:T_f\to K(N_{k},1)$ is a continuous map induced by the surjection $\pi_1(T_f)\to N_{k}$.  Then there is a continuous map $\iota$ induced by the inclusion induced homomorphism $j_*:N_k\to\bar N_k$ such that the diagram
\[
\begin{diagram}
T_f         & \rTo^{\phi_{f,k}}   & K(N_{k},1)           \\
\dTo_{I} &                         & \dTo_{\iota }             \\
T_F        & \rTo^{\Phi_{F,k} } &K(\bar N_{k},1)
\end{diagram}
\]
commutes up to homotopy of the maps, which yields the commutative diagram:
\[
\begin{diagram}
H_3(T_f)         & \rTo^{\phi_{f,k*}}   & H_3(N_{k})      \\
\dTo_{I_*} &                         & \dTo_{j_*}             \\
H_3(T_F)        & \rTo^{\Phi_{F,k*} } &H_3(\bar N_{k})
\end{diagram}
\]

Let $C\in C_3(T_f)$ be a 3-chain representing the fundamental class $[T_f]$.  That is $[C]=[T_f]\in H_3(T_f)$.  Then, $j_*\circ\bar\tau_k[f]=j_*\circ\phi_{f,k*}[C]=\Phi_{F,k*}\circ I_*[C]$, but $I_*[C]=0\in H_3(T_F)$, since $T_f=\partial T_F$ as already noted. 
\end{proof}

\section{Robust homeomorphisms}\label{robusthomeos}

\begin{definition}
If a homeomorphism $[f]\in\J(k)$ satisfies the hypothesis of Corollary~\ref{robust} we call $f$ a \emph{robust} homeomorphism (or $k$-robust for clarity).  That is, $f$ is robust if for all handlebodies $j:\Sigma\hookrightarrow\h$ bounded by $\Sigma$, $j_*\circ\tau_k[f]\ne0\in H_1(\h) \otimes \bar\Lie_{k}$.
\end{definition}

By corollary~\ref{robust} it remains only to find robust homeomorphisms in order to prove Theorem~\ref{main}.  This section is devoted to such a proof.  First note that as the handlebody $\h$ varies, we are not so interested in the homomorphism $j_*\circ\tau_k:\J(k)\to H_1(\h) \otimes \bar\Lie_{k}$ as in its kernel.  In fact, $\tau_k$ is a fixed homomorphism and it is only $j_*$ that varies with $\h$, so we are really only interested in the kernel of $j_*:\im(\tau_k)\to H_1(\h) \otimes \bar\Lie_{k}$, where $\im(\tau_k)$ is the image of $\tau_k$.

\begin{lemma}\label{h1}
The kernel of the homomorphism $j_*:\im(\tau_k)\to H_1(\h) \otimes \bar\Lie_{k}$ depends only on the kernel of the homomorphism $j_*:H_1(\Sigma)\to H_1(\h)$.
\end{lemma}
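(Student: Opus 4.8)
The plan is to prove the stronger statement that the kernel of the tensor-product homomorphism $j_*\colon H_1(\Sigma)\otimes\Lie_k\to H_1(\h)\otimes\bar\Lie_k$ \emph{on all of} $H_1(\Sigma)\otimes\Lie_k$ (here I write $\bar\Lie=\Lie(\pi_1(\h))$) depends only on $\Lambda:=\ker\big(j_*\colon H_1(\Sigma)\to H_1(\h)\big)$. Granting this, since $\im(\tau_k)\subset H_1(\Sigma)\otimes\Lie_k$ is a \emph{fixed} subgroup not depending on $\h$, and $\ker\big(j_*|_{\im(\tau_k)}\big)=\im(\tau_k)\cap\ker\big(j_*\colon H_1(\Sigma)\otimes\Lie_k\to H_1(\h)\otimes\bar\Lie_k\big)$, the lemma follows at once.

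The crux is that for a handlebody everything in sight is functorially recovered from $\Lambda$. First, $j_*\colon\pi_1(\Sigma)\to\pi_1(\h)$ is surjective (standard for handlebodies: any loop in $\h$ can be homotoped into $\partial\h$), hence so is $j_*\colon H_1(\Sigma)\to H_1(\h)$, and $H_1(\h)$ is \emph{canonically} identified with $H_1(\Sigma)/\Lambda$, with $j_*$ the quotient map. Second, $\pi_1(\h)$ is free, so $\bar\Lie=\Lie(\pi_1(\h))$ is the \emph{free} Lie ring on $H_1(\h)=H_1(\Sigma)/\Lambda$ (Section~\ref{sec:freeLierings}); thus $\bar\Lie$, together with its degree-one identification with $H_1(\Sigma)/\Lambda$, is determined by $\Lambda$ alone. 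Third --- and this is where I use that $\Lie(\Sigma)$ is generated as a Lie ring by $\Lie_1(\Sigma)=H_1(\Sigma)$ (true for any group, since $G_k/G_{k+1}$ is generated by $k$-fold brackets of elements of $G/G_2$) --- the surjection $j_*\colon\Lie(\Sigma)\to\bar\Lie$ of Proposition~\ref{prop:Liekernel} is determined by its restriction to degree one, which is exactly the quotient $H_1(\Sigma)\to H_1(\Sigma)/\Lambda$. Spelled out: if $j\colon\Sigma\hookrightarrow\h$ and $j'\colon\Sigma\hookrightarrow\h'$ satisfy $\ker(j_*\colon H_1(\Sigma)\to H_1(\h))=\ker(j'_*\colon H_1(\Sigma)\to H_1(\h'))=\Lambda$, then the canonical isomorphism $H_1(\h)\cong H_1(\Sigma)/\Lambda\cong H_1(\h')$ extends uniquely to an isomorphism of free Lie rings $\bar\Lie\cong\bar\Lie'$ under which $j_*$ and $j'_*$ correspond; in particular $\ker\big(j_*\colon\Lie_k(\Sigma)\to\bar\Lie_k\big)$ is a subgroup of $\Lie_k(\Sigma)$ depending only on $\Lambda$.

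To finish, the map $j_*\colon H_1(\Sigma)\otimes\Lie_k\to H_1(\h)\otimes\bar\Lie_k$ is the tensor product of the two surjections $H_1(\Sigma)\to H_1(\Sigma)/\Lambda$ and $\Lie_k(\Sigma)\to\bar\Lie_k$. By right-exactness of $\otimes$, for surjections $\phi\colon A\twoheadrightarrow A'$ and $\psi\colon B\twoheadrightarrow B'$ of abelian groups the kernel of $\phi\otimes\psi$ is the subgroup of $A\otimes B$ generated by the images of $\ker\phi\otimes B$ and $A\otimes\ker\psi$, hence is determined by $\ker\phi$ and $\ker\psi$. Both of the relevant kernels --- namely $\Lambda$ and $\ker(j_*\colon\Lie_k(\Sigma)\to\bar\Lie_k)$ --- depend only on $\Lambda$ by the previous paragraph, so $\ker\big(j_*\colon H_1(\Sigma)\otimes\Lie_k\to H_1(\h)\otimes\bar\Lie_k\big)$ does too, completing the argument.

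The real content is the middle paragraph: because $\pi_1(\h)$ is free, the target Lie ring $\bar\Lie$ and the surjection onto it are honest functors of the single datum $\Lambda$, whereas $\Lie(\Sigma)$ need not be free. I expect the only delicate point to be the bookkeeping needed to see that the isomorphisms used to compare two handlebodies with the same $\Lambda$ really are canonical and really do intertwine the two copies of $j_*$ --- which is precisely what ``$\Lie(\Sigma)$ is generated in degree one'' buys us.
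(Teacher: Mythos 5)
Your proof is correct, and it takes a genuinely different route from the paper's. The paper invokes Labute's theorem (Theorem~\ref{thm:labute}) to identify $\ker\bigl(j_*\colon\Lie_{(1)}\to\bar\Lie\bigr)$ \emph{explicitly} as the Lie ring ideal $\bar{\mathfrak I}$ generated by a basis of $\Lambda$, and then pushes this down to $\Lie=\Lie_{(1)}/{\mathfrak I}$ via Proposition~\ref{prop:Liebasics}; your argument never touches Labute. Instead you exploit the universal property of the free Lie ring $\bar\Lie$ on $H_1(\h)\cong H_1(\Sigma)/\Lambda$ (available because $\pi_1(\h)$ is free) together with the observation that $\Lie(\Sigma)$ is generated in degree one, to produce a canonical isomorphism of any two targets intertwining the two copies of $j_*$, and then finish with right-exactness of $\otimes$. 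Every step you cite is sound: the degree-one generation of $\Lie(G)$ is indeed a general fact, the free Lie ring on a free abelian group is well defined up to canonical isomorphism, and your description of $\ker(\phi\otimes\psi)$ for surjections of abelian groups as the subgroup generated by the images of $\ker\phi\otimes B$ and $A\otimes\ker\psi$ is correct. What your approach buys is elegance and a lighter set of inputs --- no Labute. What the paper's approach buys is the \emph{explicit} identification of the kernel as a graded ideal, which is not optional in context: Subsection~\ref{subsec:E} reuses ``As in the proof of Lemma~\ref{h1}'' to write $\ker(j_*\colon\Lie_k\to\bar\Lie_k)$ as $\bigoplus_{i\ge1}(\Lie_k)_i$ when defining the fiber ${\mathbb K}_K$ and proving $E$ is a smooth subbundle. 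So your proof is a clean alternative for the lemma as stated, but the Labute computation is doing double duty in the paper, and adopting your argument wholesale would just relocate that work rather than eliminate it.
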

\begin{proof}
We prove that the kernel of the homomorphism $j_*:\Lie_{k}\to \bar\Lie_{k}$ depends only on the kernel of $j_*:H_1(\Sigma)\to H_1(\h)$, from which the result follows.
The induced map $j_*:\pi_1(\Sigma_1)\to\pi_1(\h)$ is an epimorphism.  As in Section~\ref{sec:model}, $\pi_1(\Sigma_1)$ is a free group with free basis $\{a_1,\ldots,a_g,b_1,\ldots,b_g\}$.  We assume, without loss of generality, that $\{a_1,\ldots,a_g\}$ normally generate the kernel of the projection $\pi_1(\Sigma_1)\twoheadrightarrow\pi_1(\h)$.   Then $\{\alpha_1,\ldots,\alpha_g\}$ is a basis for the kernel of $j_*:H_1(\Sigma)\to H_1(\h)$.  
Theorem~\ref{thm:labute} gives us that the kernel of the induced map $\Lie_{(1)}\to\bar\Lie$ is exactly the Lie ring ideal $\bar{\mathfrak I}$ generated by $\{\alpha_1,\ldots,\alpha_g\}$.  See \cite{Labute} for a check of the hypotheses of Theorem~\ref{thm:labute} in this case.
Thus, we have that $\ker(j_*:\Lie_{(1)}\to\bar\Lie)$ depends only on $\ker(j_*:H_1(\Sigma)\to H_1(\h))$.  It only remains to observe that since $\Lie=\Lie_{(1)}/{\mathfrak I}$, as in Lemma~\ref{labute1}, and since ${\mathfrak I}\subset\bar{\mathfrak I}$, Proposition~\ref{prop:Liebasics} gives that $\ker(\Lie\twoheadrightarrow\bar\Lie)=\bar{\mathfrak I}/{\mathfrak I}$, which depends only on $\ker(j_*:H_1(\Sigma)\to H_1(\h))$.
\end{proof}

Lemma~\ref{h1} leads us to consider the set of all possible kernels $K\subset H_1(\Sigma)$ of inclusion induced maps into handlebodies.  For any handlebody $j:\Sigma\hookrightarrow\h$ bounded by $\Sigma$, the kernel of the map $j_*:H_1(\Sigma,\R)\to H_1(\h,\R)$ is a Lagrangian subspace of $H_1(\Sigma,\R)$.  The space of Lagrangian subspaces of $H_1(\Sigma,\R)$ (known as the Lagrangian Grassmannian) is a $g(g+1)/2$-dimensional smooth manifold (see e.g. \cite{eli}, p.\ 20), which we will denote by $\mathbb  L$.  Note that $\mathbb  L$ is larger than the set of all possible kernels $K\subset H_1(\Sigma)$ of inclusion induced maps.  For instance in genus one, $\mathbb  L$ is the space of lines through the origin in $\R^2$.  Only those lines with rational slope appear as the kernel of some $j_*:H_1(\Sigma)\to H_1(\h)$.  We will construct a vector bundle $E$ over $\mathbb  L$, and use a dimensional argument to prove: 
\begin{theorem}\label{robusthomeo}
Given $k\ge2$, there are $k$-robust homeomorphisms provided that the genus of $\Sigma$ is sufficiently large.
\end{theorem}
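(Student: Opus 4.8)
The plan is to realize the obstruction as a point missed by a vector bundle over the Lagrangian Grassmannian, and to win by a dimension count. By Lemma~\ref{h1}, for a handlebody $j:\Sigma\hookrightarrow\h$ the kernel of $j_*:\im(\tau_k)\otimes\R\to H_1(\h;\R)\otimes\bar\Lie_k\otimes\R$ depends only on the Lagrangian $L:=\ker\big(j_*:H_1(\Sigma;\R)\to H_1(\h;\R)\big)\in\mathbb{L}$; moreover the construction in the proof of Lemma~\ref{h1} (which rests on Theorem~\ref{thm:labute}) makes sense for an \emph{arbitrary} $L\in\mathbb{L}$, not merely for the dense, countable set of Lagrangians arising from actual handlebodies. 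Tensoring with $\R$ throughout, write $W_L\subseteq\im(\tau_k)\otimes\R$ for this kernel. It then suffices to produce a lattice point $v\in\im(\tau_k)$ lying in no $W_L$: lifting $v$ through the surjection $\tau_k:\J(k)\twoheadrightarrow\im(\tau_k)$ yields a homeomorphism $[f]$ with $j_*\circ\tau_k[f]\ne0$ for every handlebody $j$, i.e.\ a $k$-robust homeomorphism (which then extends to no handlebody, by Corollary~\ref{robust}).

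First I would note that $\dim_\R W_L$ is independent of $L$. The free Lie algebra $\Lie\otimes\R$ carries a canonical $\mathrm{Sp}(2g,\R)$-action (functoriality of the free Lie algebra in its degree-one part, descended through the $\mathrm{Sp}$-invariant ideal $\mathfrak I$ of Lemma~\ref{labute1}); $\im(\tau_k)\otimes\R$ is $\mathrm{Sp}(2g,\R)$-invariant; and $\mathrm{Sp}(2g,\R)$ acts transitively on $\mathbb{L}$, carrying $W_L$ to $W_{L'}$. Hence $\dim_\R W_L$ is a constant, say $d$, and
\[
E:=\{(L,w):L\in\mathbb{L},\ w\in W_L\}
\]
is the total space of a rank-$d$ vector bundle over the compact manifold $\mathbb{L}$, so $\dim E=d+\tfrac{g(g+1)}{2}$. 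The forgetful map $\pi:E\to\im(\tau_k)\otimes\R=\R^N$, where $N:=\dim_\R\im(\tau_k)$, is the inclusion $W_L\hookrightarrow\R^N$ on each fibre, hence fibrewise injective; fixing a Euclidean bundle metric, $\pi$ sends the compact unit-sphere bundle of $E$ to a compact subset of $\R^N\setminus\{0\}$, so $\im(\pi)$ is a closed cone. If
\[
d+\frac{g(g+1)}{2}<N,
\]
then by Sard $\im(\pi)$ has measure zero, so its complement is a nonempty open cone; an open cone in $\R^N$ contains points of any full-rank lattice (dilate a small ball inside the cone until its radius exceeds the covering radius of the lattice), in particular a point $v$ of $\im(\tau_k)$. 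Such a $v$ is the desired obstruction.

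The remaining task — and, I expect, the real obstacle — is to establish $d+\tfrac{g(g+1)}{2}<N$ once $g$ is large (with $k\ge2$ fixed). Since $d=\dim_\R W_L=N-\dim_\R j_*\big(\im(\tau_k)\otimes\R\big)$, this is equivalent to
\[
\dim_\R\, j_*\big(\im(\tau_k)\otimes\R\big)\ >\ \frac{g(g+1)}{2}=\dim\mathbb{L}.
\]
The right-hand side is quadratic in $g$, so one needs the dimension of the image of the ``handlebody-reduced'' Johnson homomorphism $j_*\circ\tau_k$ to grow faster than $g^2$. For fixed $k$, both $\dim_\R\im(\tau_k)$ and $\dim_\R\bar\Lie_k=C(k,g)$ are polynomials in $g$ of degree $k+1\ge3$ with positive leading coefficient; the content is to verify that $j_*$ does not collapse $\im(\tau_k)$ below degree $3$. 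Here I would combine the known $\mathrm{Sp}$-module description of $\im(\tau_k)$ with the explicit identification (Theorem~\ref{thm:labute}; cf.\ \cite{Labute}) of $\ker(\Lie_k\to\bar\Lie_k)$ as the degree-$k$ part of the Lie ideal generated by a basis of $L$, as in the proof of Lemma~\ref{h1}, and exhibit a subspace of $\im(\tau_k)\otimes\R$ of dimension polynomial in $g$ of degree $\ge3$ on which $j_*$ is injective. Once this polynomial comparison is in hand, any sufficiently large $g$ makes $d+\tfrac{g(g+1)}{2}<N$, and the bundle argument above produces a $k$-robust homeomorphism, proving Theorem~\ref{robusthomeo}; making ``sufficiently large'' explicit for each $k$ (the genus bounds appearing in Theorem~\ref{main}) then amounts to evaluating these polynomials.
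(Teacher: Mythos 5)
Your setup reproduces the paper's strategy faithfully: build a vector bundle $E$ over the Lagrangian Grassmannian $\mathbb{L}$ whose fibre over $L$ is the kernel $W_L\subset\im(\tau_k)^\R$ of the ``handlebody-reduced'' Johnson homomorphism, show $\dim E<\dim\im(\tau_k)^\R$, and use compactness plus Sard to produce a lattice point of $\im(\tau_k)$ missed by the natural projection. Your route to constancy of $\dim W_L$ via transitivity of $\mathrm{Sp}(2g,\R)$ on $\mathbb{L}$ and $\mathrm{Sp}$-invariance of $\im(\tau_k)^\R$ is cleaner than the paper's, which instead constructs an explicit grading on $\im(\tau_k)$ adapted to a symplectic basis (using Hain's theorem that $\im(\tau)^\R$ is generated in degree two) and shows that the pieces fit into a smooth subbundle of the trivial bundle. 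Your covering-radius argument for finding a lattice point in the complementary open cone also works; the paper uses rationality of the point produced by Sard.

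However, there is a genuine gap, and you point at it yourself: you reduce the theorem to the inequality
\[
\dim_\R j_*\bigl(\im(\tau_k)\otimes\R\bigr) \;>\; \frac{g(g+1)}{2},
\]
but you do not prove it. The sentence ``exhibit a subspace of $\im(\tau_k)\otimes\R$ of dimension polynomial in $g$ of degree $\ge 3$ on which $j_*$ is injective'' is precisely what requires work, and it is the main technical content of the paper's Subsection~\ref{subsec:levine}. The paper establishes this via an entirely different body of machinery: Oda's embedding $\theta:\p_g\hookrightarrow\Mod(\Sigma,D)$ of the pure braid group into the mapping class group, inducing embeddings $(\p_g)_k/(\p_g)_{k+1}\hookrightarrow\J(k)/\J(k+1)$; Levine's lemma that the composition $\tilde j_*\circ\tau_{1,k}\circ\rho$ remains injective because the $\lambda_i$ in the push-map description are words in the $b_i$ alone (hence survive the quotient by the Lie ideal generated by the $\alpha_i$); the Falk--Randell theorem applied to the split sequence $1\to F^{g-1}\to\p_g\to\p_{g-1}\to1$, giving the decomposition $(\p_g)_k/(\p_g)_{k+1}\cong\bigoplus_{m=3}^{g}(F^{m-1})_k/(F^{m-1})_{k+1}$; and finally the Witt formula for $C(k,m)$ to show the resulting lower bound grows as a polynomial of degree $k$ in $g$. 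Without some concrete mechanism of this kind, the claim that $j_*$ does not collapse $\im(\tau_k)$ to something of dimension at most quadratic in $g$ is unproven. (As a minor side point, $\dim_\R\bar\Lie_k = C(k,g)$ has degree $k$ in $g$, not $k+1$; this does not affect your argument's structure since that paragraph is heuristic, but it is worth correcting.)

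In short: the bundle-and-Sard framework is correct and essentially the paper's, with a nicer equivariance argument for the fibre; what is missing is the central lower bound on $\dim\im(j_*\circ\tau_k)^\R$, which in the paper comes from the pure braid group embedding and Levine's injectivity lemma, and which your proposal leaves as an acknowledged but unfilled hole.
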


The remainder of this section is devoted to a proof of Theorem~\ref{robusthomeo}.
In Subsection~\ref{subsec:E} we construct the needed vector bundle.  In Subsection~\ref{subsec:reduce} we reduce the problem to a dimensional argument.  In Subsection~\ref{subsec:levine} we give an estimate for the appropriate dimension.  Finally, in Subsection~\ref{subsec:proof} we prove the theorem.

\subsection{Construction of the vector bundle $E$}\label{subsec:E}

Consider the trivial vector bundle $q:{\mathbb  L}\times\im(\tau_k)^\R\to{\mathbb  L}$, where $\im(\tau_k)^\R:=\im(\tau_k)\otimes\R$ (in general we will use the notation $A^\R:=A\otimes\R$).  Of course there is the projection map $p_2:{\mathbb  L}\times\im(\tau_k)^\R\to\im(\tau_k)^\R$.  We construct a subbundle $E\subset{\mathbb  L}\times\im(\tau_k)^\R$ by restricting the fibers.  In particular, above any point $K\in{\mathbb  L}$ we wish to define  the fiber ${\mathbb K }_K$ of $E$ and prove that $E$ is a smooth vector bundle.  
See Figure~\ref{fig:E} for a schematic of the construction of $E$.  
\begin{figure}[h!tp]
\centering
\begin{picture}(435,320)(0,0)
\includegraphics[width=.98\textwidth]{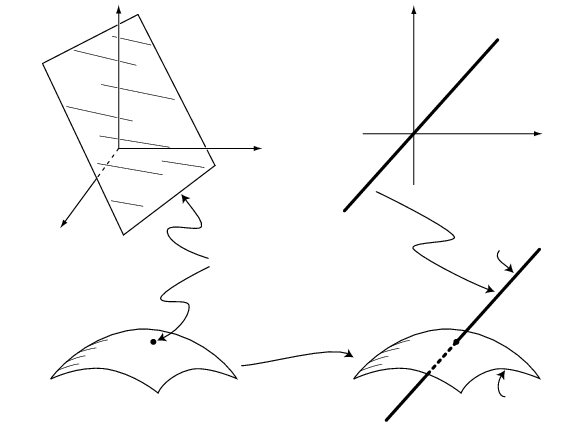}
\put(-362,75){\large$\mathbb L$}
\put(-275,122){\large$K$}
\put(-35,85){$\left. \phantom{\begin{array}{c}0\\0\\0\\0\\0\\0\\0\\0\end{array}} \right\}${\large$E$}}
\put(-278,300){\large$H_1(\Sigma,\R)$}
\put(-55,280){{\large${\mathbb K }_K$}}
\put(-53,25){base}
\put(-60,140){fiber}
\end{picture}
\caption{Schematic of the construction of the vector bundle $E$.}\label{fig:E}
\end{figure}
Suppose that ${\mathcal B}=\{x_1,\ldots x_g,y_1,\ldots,y_g\}$ is a symplectic basis for $H_1(\Sigma)$.  
We define a grading on $\Lie_k$ associated to ${\mathcal B}$.
According to Section~\ref{sec:freeLierings}, $\Lie_{1,k}$ has a basis consisting of basic commutators of weight $k$ in the elements of ${\mathcal B}$.  Let ${\mathcal B}_k$ be the set of said basic commutators.  We partition ${\mathcal B}_k$:
\[
{\mathcal B}_k=\coprod_{i=0}^k({\mathcal B}_k)_i,
\]
where $({\mathcal B}_k)_i$ consists of those basic commutators with exactly $i$ appearances of the $x$'s.  This partition of the basis ${\mathcal B}_k$ gives a grading on the free abelian group $\Lie_{1,k}$.  Namely, we define: 
\[
(\Lie_{1,k})_i:={\mathrm{span}}_\z ({\mathcal B}_k)_i.
\]
That is,
\[
\Lie_{1,k}=\bigoplus_{i=0}^k(\Lie_{1,k})_i.
\]
This gives a bigrading of $\Lie_{(1)}$.
The symplectic class, $\Omega=\sum_{i=1}^g [\alpha_i,\beta_i]=\sum_{i=1}^g[x_i,y_i]$, is the defining relator for $\Lie$.  Since $\Omega$ lies in one of the bigraded pieces (namely $(\Lie_{1,2}^\R)_1$), this bigrading of $\Lie_{(1)}$ descends to a bigrading of $\Lie$ and a grading:
\[
\Lie_k=\bigoplus_{i=1}^k(\Lie_k)_i.
\]
We have a basis for the free abelian group $H_1(\Sigma)\otimes\Lie_{1,k}$, namely ${\mathcal B}\otimes {\mathcal B}_k$, which may also be partitioned into sets whose elements contain exactly $i$ occurrences of the $x$'s.  So similarly we have a grading on $H_1(\Sigma)\otimes\Lie_{1,k}$, which descends to a grading:
\[
H_1(\Sigma)\otimes\Lie_{k}=\bigoplus_{i=0}^{k+1}\big(H_1(\Sigma)\otimes\Lie_{k}\big)_i.
\]
Next we define a grading on $\im(\tau_k)$:
\[
\im(\tau_k)_i:=\im(\tau_k)\cap\big(H_1(\Sigma)\otimes\Lie_{k}\big)_i,
\]
and this gives a grading on the vector space:
\[
\im(\tau_k)^\R_i=\im(\tau_k)_i\otimes\R.
\]
\begin{proposition}\label{prop:taugrading}
For $k\ge 2$, the above definition of $\im(\tau_k)^\R_i$ does indeed give a grading.  That is
\[
\im(\tau_k)^\R=\bigoplus_{i=0}^{k+1}\im(\tau_k)^\R_i.
\]
\end{proposition}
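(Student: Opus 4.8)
The plan is to show that the subspace $\im(\tau_k)^\R$ is a graded subspace of $H_1(\Sigma)^\R\otimes\Lie_k^\R$ with respect to the grading by number of $x$-occurrences, i.e.\ that it is spanned by its homogeneous pieces. First I would recall that $\im(\tau_k)$ is (up to the isomorphism $\eta$) the image of $\sigma:\J(k)\to\Hom(H_1(\Sigma),\Lie_k)$, and that by work of Johnson and Morita this image is well understood: for $k=2$ it is the kernel of the bracket-type contraction $H_1\otimes\Lie_2\to\Lie_3$ (equivalently, in $\wedge^3 H_1$ after the usual identifications), and in general $\im(\tau_k)$ sits inside the kernel of the Lie bracket map $H_1(\Sigma)\otimes\Lie_k\to\Lie_{k+1}$. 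The crucial structural input is that the symplectic class $\Omega=\sum_i[x_i,y_i]$ lies in the single bigraded piece $(\Lie_{1,2}^\R)_1$, so \emph{both} the bracket map $\Lie_{1,k}\to\Lie_k$ (killing the ideal generated by $\Omega$) and the contraction $H_1\otimes\Lie_k\to\Lie_{k+1}$ are morphisms of graded vector spaces once the grading on $H_1(\Sigma_1)$ is declared by putting the $x_i$ in degree $1$ and the $y_i$ in degree $0$. Consequently the kernel of any such graded map is itself a graded subspace.

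The key steps, in order, are as follows. \textbf{(1)} Fix the grading on $H_1(\Sigma_1)^\R$ by $\deg(x_i)=1$, $\deg(y_i)=0$, extend multiplicatively to a grading on the free Lie algebra $\Lie_{(1)}^\R$ (this is the bigrading already set up in the text), and observe $\Omega$ is homogeneous; hence the grading descends to $\Lie^\R$ and to $\Lie_k^\R$ exactly as in the preceding paragraphs of the paper. \textbf{(2)} Check that the contraction/bracket map $C\colon H_1(\Sigma)^\R\otimes\Lie_k^\R\to\Lie_{k+1}^\R$ used to cut out $\im(\tau_k)$ is graded of degree $0$: this is immediate from bilinearity of the Lie bracket and the fact that the generators are homogeneous. \textbf{(3)} Invoke the description (Johnson--Morita) of $\im(\tau_k)^\R$ as $\ker C$ — or more precisely as a graded subspace defined by graded linear conditions; a pure linear-algebra lemma then gives $\ker C=\bigoplus_i(\ker C)_i$, so $\im(\tau_k)^\R$ decomposes compatibly with the grading, and by definition $(\im(\tau_k)^\R)_i=\im(\tau_k)^\R\cap(H_1(\Sigma)\otimes\Lie_k)_i^\R$ are exactly the homogeneous summands. \textbf{(4)} Handle the range of indices: the grading on $H_1(\Sigma)\otimes\Lie_k$ runs $i=0,\dots,k+1$ (one possible $x$ from the $H_1$-tensor factor plus up to $k$ from $\Lie_k$), matching the claimed direct sum. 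The hypothesis $k\ge 2$ enters because $\tau_2,\tau_3$ and their images have the explicit form just cited and because Lemma~\ref{labute1} (applicability of Labute's theorem for the relator $\Omega$) is what legitimates passing the grading from $\Lie_{(1)}$ to $\Lie$.

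The main obstacle I expect is step \textbf{(3)}: I need a clean statement that $\im(\tau_k)^\R$ really is the kernel of a \emph{graded} linear map (rather than merely being \emph{contained} in such a kernel), so that gradedness of the kernel transfers to gradedness of the image. If the paper only has the inclusion $\im(\tau_k)\subset\ker C$ available, I would instead argue directly: take $v\in\im(\tau_k)^\R$, write $v=\sum_i v_i$ with $v_i$ its homogeneous components in $H_1(\Sigma)\otimes\Lie_k$, and show each $v_i$ is again in $\im(\tau_k)^\R$. The natural way to do this is to produce, for each $i$, a mapping class (or at least an element of $\J(k)\otimes\R$ in the sense of the associated graded of the Johnson filtration) whose $\tau_k$-value is $v_i$; here one uses that the $\z/(something)$-action, or rather the torus action diagonalizing the $x$-grading, is induced by an automorphism of $\pi_1(\Sigma_1)$ preserving $\Omega$ and hence commuting with $\tau_k$ in the appropriate sense, so $\im(\tau_k)^\R$ is invariant under that action and therefore decomposes into weight spaces — which are precisely the graded pieces. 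Either route reduces the Proposition to bookkeeping already present in the paper's setup plus the cited structure of the Johnson image.
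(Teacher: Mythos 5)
The primary route (steps 1--3) has precisely the gap you flag in step (3): for $k\ge 3$, $\im(\tau_k)$ is not known to equal the kernel of the graded bracket/contraction condition on $H_1(\Sigma)\otimes\Lie_k$ --- it is only \emph{contained} in the space of symplectic derivations of $\Lie$, and Morita's trace maps (and later work of Enomoto--Satoh) show this containment is strict; pinning down $\im(\tau_k)$ exactly is a famous open problem. A proper subspace of a graded subspace need not itself be graded, so steps (1)--(3) establish gradedness of the wrong space.

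Your fallback in the last paragraph is the right sort of idea but is stated incorrectly: the rescaling $x_i\mapsto t\,x_i$, $y_i\mapsto t^{-1}y_i$ (for $t\neq\pm1$) is not an automorphism of the free group $\pi_1(\Sigma_1)$ --- one cannot scale group elements --- and is not realized by any mapping class, so there is no direct commutation with $\tau_k$. The repair is representation-theoretic: $\tau_k$ is $\mathrm{Sp}(2g,\z)$-equivariant for the conjugation action of $\Mod(\Sigma,D)$ on $\J(k)$, so $\im(\tau_k)^\R$ is an $\mathrm{Sp}(2g,\z)$-submodule of the algebraic $\mathrm{Sp}(2g,\R)$-representation $H_1(\Sigma,\R)\otimes\Lie_k^\R$; by Zariski density of $\mathrm{Sp}(2g,\z)$ in $\mathrm{Sp}(2g,\R)$ it is then $\mathrm{Sp}(2g,\R)$-invariant, in particular invariant under the torus $\mathrm{diag}(t,\dots,t,t^{-1},\dots,t^{-1})$, whose weight decomposition of $H_1(\Sigma,\R)\otimes\Lie_k^\R$ is exactly the grading by number of $x$'s. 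That fixed-up version is a correct proof and is genuinely different from the paper's, which instead uses Morita's Lie ring structure on $\bigoplus_k\im(\tau_k)$, Hain's theorem that $\im(\tau)^\R$ is generated by $\im(\tau_2)^\R$, Johnson's identification $\im(\tau_{1,2})=\bigwedge^3 H_1(\Sigma)$ (so the degree-two piece is visibly graded), and the compatibility $[\im(\tau_k)^\R_i,\im(\tau_l)^\R_j]\subset\im(\tau_{k+l-1})^\R_{i+j-1}$, which propagates the grading by bracketing. Both proofs lean on a deep external input (Zariski density of arithmetic lattices versus Hain's Hodge-theoretic generation theorem); the $\mathrm{Sp}$-route is shorter and needs none of Johnson's explicit computation of $\im(\tau_2)$.
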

In order to prove Proposition~\ref{prop:taugrading} we need the following theorem of Morita \cite{derivations}:
\begin{theorem}[Morita]\label{thm:tauisalg}
The free abelian group
\[
\im(\tau):=\bigoplus_{k=2}^\infty\im(\tau_k)
\]
has a natural Lie ring structure.
\end{theorem}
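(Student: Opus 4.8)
The plan is to realize $\im(\tau)=\bigoplus_{k\ge2}\im(\tau_k)$ as a graded Lie subring of the commutator Lie ring of derivations of a free Lie ring, and to identify its bracket with the one coming from commutators in $\Mod(\Sigma,D)$. First I would record Johnson's basic fact that the Johnson filtration is, up to an index shift, a central filtration: $[\J(i),\J(j)]\subseteq\J(i+j-1)$ for all $i,j$. Equivalently, setting $G:=\I=\J(2)$ and $G_{\bar i}:=\J(i+1)$, the chain $G_{\bar1}\supseteq G_{\bar2}\supseteq\cdots$ is a central filtration of $\I$ in the sense of Chapter~\ref{sec:Lie}, so by the construction given there,
\[
\mathrm{gr}\,\J:=\bigoplus_{k\ge2}\frac{\J(k)}{\J(k+1)}
\]
is a graded Lie ring whose bracket is induced by the group commutator (here $\J(k)/\J(k+1)$ sits in degree $k-1$).

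Next I would pass from $\mathrm{gr}\,\J$ to $\im(\tau)$ using both Johnson homomorphisms. Directly from the definitions of $\J$ and of $\tau_{1,k}$, for $[f]\in\J(k)$ one has $\tau_{1,k}[f]=0$ iff $f'_*(x)x^{-1}\in\pi_1(\Sigma_1)_{k+1}$ for all $x\in\pi_1(\Sigma_1)$, i.e.\ iff $[f]\in\J(k+1)$; hence $\tau_{1,k}$ induces an isomorphism $\J(k)/\J(k+1)\cong\im(\tau_{1,k})$. Since $\pi_1(\Sigma_1)$ is free on $\{a_i,b_i\}$ and $\Lie_{(1)}$ is the corresponding free Lie ring, the homomorphism $[x]\mapsto[f'_*(x)x^{-1}]\colon H_1(\Sigma_1)=\Lie_{1,1}\to\Lie_{1,k}$ extends uniquely to a derivation $D_f$ of $\Lie_{(1)}$ of degree $k-1$; that it extends at all follows from the twisted cocycle relation $d_f(xy)=(y^{-1}d_f(x)y)\,d_f(y)$ (writing $f'_*(x)=x\,d_f(x)$ with $d_f(x)\in\pi_1(\Sigma_1)_k$) together with $[\pi_1(\Sigma_1)_k,\pi_1(\Sigma_1)]\subseteq\pi_1(\Sigma_1)_{k+1}$, which make $[x]\mapsto[d_f(x)]$ additive and Leibniz modulo $\pi_1(\Sigma_1)_{k+1}$. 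Through $\eta'$ this identifies $\bigoplus_{k\ge2}\im(\tau_{1,k})$ with a graded subgroup of the derivation Lie ring $\mathrm{Der}(\Lie_{(1)})$.

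The heart of the argument is then the compatibility of brackets: for $[f]\in\J(i)$ and $[g]\in\J(j)$ one has $D_{[f,g]}=[D_f,D_g]$, so the isomorphisms above carry the bracket of $\mathrm{gr}\,\J$ to the commutator bracket of derivations, and in particular $\bigoplus_{k\ge2}\im(\tau_{1,k})$ is a Lie subring of $\mathrm{Der}(\Lie_{(1)})$. This is the same commutator computation that yields $[\J(i),\J(j)]\subseteq\J(i+j-1)$, now carried out modulo $\pi_1(\Sigma_1)_{i+j}$: expanding $(f'_*g'_*(f'_*)^{-1}(g'_*)^{-1})(x)\,x^{-1}$ and discarding every term of weight $\ge i+j$, the surviving terms reassemble into $D_f(D_g x)-D_g(D_f x)$. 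Finally I would descend to $\im(\tau)$. Because $f$ fixes $D$ pointwise, $f'_*$ fixes the boundary loop $\prod_i[a_i,b_i]$ of $\Sigma_1$, so $D_f$ annihilates the symplectic class $\Omega=\sum_i[\alpha_i,\beta_i]$; thus $\bigoplus_{k\ge2}\im(\tau_{1,k})$ lies in the set $\mathfrak h$ of $\Omega$-annihilating derivations, which is a Lie subring ($D\Omega=D'\Omega=0$ forces $[D,D']\Omega=0$). Every $D\in\mathfrak h$ preserves the ideal $\mathfrak I=(\Omega)$ (from $D[\Omega,z]=[\Omega,Dz]$, by induction on weight), hence descends to a derivation of $\Lie=\Lie_{(1)}/\mathfrak I$ by Lemma~\ref{labute1}, and one checks this descent is precisely $i_*$ under $\eta'$ and $\eta$. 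Since $\tau_k=i_*\circ\tau_{1,k}$ by Lemma~\ref{lemma1}, $\im(\tau)=\bigoplus_{k\ge2}i_*\,\im(\tau_{1,k})$ is the image of the graded Lie subring $\bigoplus\im(\tau_{1,k})$ under a Lie-ring homomorphism, and so inherits a graded Lie ring structure — the asserted one, natural because it is simultaneously the commutator bracket of derivations of $\Lie_{(1)}$ (resp.\ $\Lie$) and the bracket induced by the commutator in $\Mod(\Sigma,D)$.

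I expect the main obstacle to be the single commutator computation underlying the step $[\J(i),\J(j)]\subseteq\J(i+j-1)$ and its refinement $D_{[f,g]}=[D_f,D_g]$: it is a careful exercise in commutator calculus — the Hall--Witt identity, the cocycle relation for $d_f$, and repeated use of $[\pi_1(\Sigma_1)_p,\pi_1(\Sigma_1)_q]\subseteq\pi_1(\Sigma_1)_{p+q}$ — with the persistent bookkeeping hazard that $\im(\tau_k)$ is naturally of ``degree $k-1$'' in the derivation algebra while being indexed by $k$, and that the free Lie ring $\Lie_{(1)}$ and its quotient $\Lie=\Lie_{(1)}/\mathfrak I$ must be kept carefully distinguished. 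Once that computation is in hand, the remaining steps are formal.
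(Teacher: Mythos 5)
Your proposal is essentially the argument the paper has in mind: the paper explicitly omits the proof and cites Morita, but the idea it sketches --- viewing each $\tau_k[f]$ as a degree $k-1$ derivation of $\Lie$ (equivalently of $\Lie_{(1)}$, annihilating $\Omega$ and hence descending through $\mathfrak I$) with the bracket given by the commutator of derivations --- is exactly what you develop, with closure under the bracket coming from $[\J(i),\J(j)]\subset\J(i+j-1)$ and $D_{[f,g]}=[D_f,D_g]$. The one caveat is that this last compatibility, which you correctly identify as the heart of the matter, is precisely the substantive content of Morita's theorem and remains only sketched in your write-up; the surrounding steps (the identification $\J(k)/\J(k+1)\cong\im(\tau_{1,k})$, the vanishing $D_f(\Omega)=0$ from $f$ fixing the boundary word, and the descent to $\Lie$ compatible with $\tau_k=i_*\circ\tau_{1,k}$) are all correct.
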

We do not prove Theorem~\ref{thm:tauisalg} here, but we would like to give an idea of how the Lie ring structure on $\im(\tau)$ arises.
In fact, $\im(\tau)$ is a subring of the Lie ring of positive degree derivations\footnote{A derivation of a Lie ring $L$, is a linear map $D:L\to L$ such that $D([x,y])=[D(x),y]+[x,D(y)]$ for all $x,y\in L$.} 
of the graded Lie ring $\Lie$.  Recall that $\im(\tau_k)\subset H_1(\Sigma)\otimes\Lie_k$, which we canonically identify with $\Hom(H_1(\Sigma),\Lie_k)$.  Thus let $\phi\in\Hom(H_1(\Sigma),\Lie_k)$.  We define a derivation of $\Lie$ associated to $\phi$.  Following Morita we denote this derivation $\phi\{\cdot\}$ (whereas the homomorphism $\phi$ will be denoted by $\phi(\cdot)$).  For $\ell=[\cdots[\ell_1,\ell_2],\cdots,],\ell_k]\in\Lie_k$ we define
\[
\phi\{\ell\}:=\sum_{i=1}^k[\cdots[\ell_1,\ell_2],\cdots],\phi(\ell_i)],\cdots],\ell_k].
\]
This shows how elements of $\im(\tau)$ are considered derivations of $\Lie$.  The bracket of two such derivations $[\phi,\psi]$ is defined by:
\[
[\phi,\psi]\{\ell\}:=\phi\{\psi\{\ell\}\}-\psi\{\phi\{\ell\}\}.
\]

Thus, by Remark~\ref{lem:liealg}
\[
\im(\tau)^\R=\bigoplus_{k=2}^\infty\im(\tau_k)^\R
\]
is a Lie algebra.
We need the following theorem of Hain (see \cite{infinitesimal}) whose proof we omit:
\begin{theorem}[Hain]\label{thm:taugen}
The Lie algebra
\[
\im(\tau)^\R=\bigoplus_{k=2}^\infty\im(\tau_k)^\R
\]
is generated by the degree two summand $\im(\tau_2)^\R$.
\end{theorem}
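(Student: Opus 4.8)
The plan is to recognise $\im(\tau)^\R$ as the image, under the Johnson homomorphisms, of the associated graded Lie ring of the lower central series of the Torelli group $\I=\J(2)$, and then to appeal to the tautology that such an associated graded is generated by its bottom graded piece.

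Write $\I=\I_1\supseteq\I_2\supseteq\cdots$ for the lower central series of $\I$ and $\Lie(\I)=\bigoplus_{k\ge1}\I_k/\I_{k+1}$ for its associated graded Lie ring. From the standard commutator estimates for the Johnson filtration, $[\J(i),\J(j)]\subseteq\J(i+j-1)$, a short induction gives $\I_k\subseteq\J(k+1)$ for every $k$; since also $\I_{k+1}\subseteq\J(k+2)$, there are natural maps $\Lie_k(\I)=\I_k/\I_{k+1}\to\J(k+1)/\J(k+2)$. Because $\tau_{k+1}=i_*\circ\tau_{1,k+1}$ (Lemma~\ref{lemma1}) and $\ker\tau_{1,k+1}=\J(k+2)$, the homomorphism $\tau_{k+1}$ kills $\J(k+2)$ and maps onto $\im(\tau_{k+1})$; composing gives $\theta_k\colon\Lie_k(\I)^\R\to\im(\tau_{k+1})^\R$. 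With the usual reindexing placing $\im(\tau_{k+1})$ in degree $k$ — so that Morita's bracket $\im(\tau_a)^\R\times\im(\tau_b)^\R\to\im(\tau_{a+b-1})^\R$ is degree-additive — the $\theta_k$ assemble into a degree-preserving map $\theta\colon\Lie(\I)^\R\to\im(\tau)^\R$. That $\theta$ is a homomorphism of Lie algebras is exactly the identity $\tau_{k+l+1}([f,g])=[\tau_{k+1}(f),\tau_{l+1}(g)]$ for $f\in\J(k+1)$, $g\in\J(l+1)$, i.e.\ the compatibility of the group commutator with Morita's derivation bracket that underlies Theorem~\ref{thm:tauisalg}. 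In degree one, $\theta_1\colon\I_1/\I_2\otimes\R\to\im(\tau_2)^\R$ is the factorisation of $\tau_2$ through $H_1(\I;\R)$ and is in particular surjective (in fact an isomorphism, by Johnson's computation of $H_1(\I;\z)$, though surjectivity is all we use).

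Granting for now that $\theta$ is surjective, the theorem follows formally. The associated graded of the lower central series of any group is generated as a Lie ring by its degree-one summand, so $\Lie(\I)^\R$ is generated by $\I_1/\I_2\otimes\R$; applying the Lie-algebra homomorphism $\theta$, its image is generated by $\theta_1(\I_1/\I_2\otimes\R)=\im(\tau_2)^\R$, and by surjectivity that image is all of $\im(\tau)^\R$.

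Everything therefore reduces to the surjectivity of $\theta$, i.e.\ to the surjectivity of the natural maps $\Lie_k(\I)\otimes\R\to(\J(k+1)/\J(k+2))\otimes\R$: equivalently, the Johnson filtration and the lower central series filtration of the Torelli group must induce the same filtration over $\R$. This is the genuine content, and it is what the Hodge-theoretic machinery of \cite{infinitesimal} supplies. One passes to the unipotent completion of $\Mod(\Sigma)$ relative to the Zariski-dense symplectic representation $\Mod(\Sigma)\to\mathrm{Sp}(H_1(\Sigma,\R))$; its prounipotent radical carries a functorial mixed Hodge structure, and the resulting weight filtration on the Malcev Lie algebra of $\I$ can be shown — using the $\mathrm{Sp}$-module structure of $H_1(\I)$ and $H_2(\I)$ together with a weight count — to coincide with the lower central series filtration and also with the Johnson filtration, so the two agree rationally once $g$ is large. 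This coincidence of filtrations is the main obstacle; the "generated in degree one" step and the fact that $\theta$ is a Lie homomorphism are purely formal by comparison. (Alternatively one can avoid relative completions and argue directly with the mixed Hodge structure on $\pi_1$ of the fibres of the universal curve over $\mathcal{M}_g$, showing a presentation of the relevant graded Lie algebra has all its generators in weight one; the essential input is the same.)
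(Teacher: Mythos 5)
The first thing to say is that the paper does not prove this statement: Theorem~\ref{thm:taugen} is imported from Hain's paper \cite{infinitesimal}, and the text explicitly says the proof is omitted. So there is no in-paper argument to measure you against; the only benchmark is Hain's paper itself.

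Measured against that, what you have written is a correct \emph{reduction}, not a proof. The formal layer is sound: $[\J(i),\J(j)]\subset\J(i+j-1)$ does give $\I_k\subset\J(k+1)$ by induction; Morita's compatibility $\tau_{k+l+1}([f,g])=[\tau_{k+1}(f),\tau_{l+1}(g)]$ makes your maps $\theta_k\colon(\I_k/\I_{k+1})\otimes\R\to\im(\tau_{k+1})^\R$ assemble into a graded Lie algebra homomorphism; $\theta_1$ is surjective simply because $\im(\tau_2)=\tau_2(\I)$ and $\I_2\subset\J(3)=\ker\tau_{1,2}$; and the associated graded of a lower central series is generated in degree one. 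But every ounce of difficulty sits in the step you yourself label the genuine content: the surjectivity of $\theta_k$ for all $k$, i.e.\ that $\tau_{k+1}(\I_k)$ spans $\im(\tau_{k+1})^\R$. For that you offer a one-sentence gesture at relative unipotent completions, mixed Hodge structures and a ``weight count'' and then cite \cite{infinitesimal} --- which is to say, you cite the very theorem being proved in its essential form, exactly as the paper does. Two points of care if you intend to flesh this out: what Hain actually supplies (and all you need) is that the Johnson filtration is the pullback of the weight filtration on the completion together with the fact that the rational lower-central-series gradeds of $\I$ surject onto the weight gradeds; asserting a full rational ``coincidence'' of the Johnson filtration with the lower central series of $\I$ is stronger than what is established and stronger than what you need. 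And the statement carries a genus hypothesis ($g\ge3$ in Hain; note $\im(\tau_2)^\R=\bigwedge^3H_1(\Sigma,\R)/H_1(\Sigma,\R)=0$ when $g=2$), so your ``once $g$ is large'' caveat is not optional and should be made explicit, since the paper's own statement suppresses it.
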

\begin{proof}[Proof of Proposition~\ref{prop:taugrading}]
Johnson described $\im(\tau_{1,2})$ in \cite{J80} as 
\[
\im(\tau_{1,2})=\bigwedge^3H_1(\Sigma)\subset H_1(\Sigma)\otimes \left(\bigwedge^2H_1(\Sigma)\right). 
\]
Note that $\Lie_{1,2}\cong\bigwedge^2H_1(\Sigma)$.
It is clear that $\im(\tau_{1,2})=\bigwedge^3H_1(\Sigma)$ admits the described grading.  That is
\[
\im(\tau_{1,2})=\bigoplus_{i=0}^3 \im(\tau_{1,2})_i,
\]
where
\[
\im(\tau_{1,2})_i=\big(H_1(\Sigma)\otimes\Lie_{1,2}\big)_i\cap\im(\tau_{1,2}).
\]
The degree two summand of $\im(\tau)^\R$, is given by
\[
\im(\tau_2)^\R=\frac{\bigwedge^3H_1(\Sigma,\R)}{\Omega\wedge H_1(\Sigma,\R)}.
\]
The grading on $\im(\tau_{1,2})$ gives a grading on $\im(\tau_{1,2})^\R$ which descends to a grading on $\im(\tau_2)^\R$.
One may easily check from the above definition of the bracket operation that $[\im(\tau_k)^\R_i,\im(\tau_l)^\R_j]\subset \im(\tau_{k+l-1})^\R_{i+j-1}$, where $\im(\tau_{k+l-1})^\R_{-1}=\im(\tau_{k+l-1})^\R_{(k+l+1)}=0$.  Thus, starting from a basis for $\im(\tau_2)^\R$ adapted to its grading, we get a basis for $\im(\tau_k)^\R$ adapted to its grading by Theorem~\ref{thm:taugen}.
\end{proof}

We are now ready to define ${\mathbb K }_K$.
$K$ is a Lagrangian subspace of $H_1(\Sigma,\R)$. We let ${\mathcal B}=\{x_1,\ldots x_g,y_1,\ldots,y_g\}$ be a symplectic basis adapted to $K$.  That is,  $\{x_1,\ldots x_g\}$ is a basis for $K$.  Grade $\im(\tau_k)^\R$ as above according to ${\mathcal B}$.  We define:
\[
{\mathbb K }_K:=\bigoplus_{i=1}^{k+1}\im(\tau_k)^\R_i
\]
\begin{proposition}
$E$ is a smooth vector bundle. 
\end{proposition}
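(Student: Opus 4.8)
The plan is to show that $E \subset \mathbb{L}\times\im(\tau_k)^\R$ is a smooth subbundle by verifying the local triviality condition: around each point $K_0 \in \mathbb{L}$ we must produce a neighborhood $U$ and a smooth frame for $E$ over $U$. The fiber $\mathbb{K}_K$ is defined as a sum of graded pieces $\im(\tau_k)^\R_i$ for $i\ge 1$, where the grading on $\im(\tau_k)^\R$ depends on a choice of symplectic basis $\mathcal{B}$ adapted to $K$ (so that $x_1,\dots,x_g$ spans $K$). The first thing to observe is that this fiber has constant dimension: indeed $\mathbb{K}_K = \bigoplus_{i=1}^{k+1}\im(\tau_k)^\R_i$ is the complement, inside $\im(\tau_k)^\R = \bigoplus_{i=0}^{k+1}\im(\tau_k)^\R_i$ (using Proposition~\ref{prop:taugrading}), of the single summand $\im(\tau_k)^\R_0$ of ``$x$-degree zero''; and $\mathbb{K}_K$ can be described intrinsically as $\{\,v \in \im(\tau_k)^\R : v \in (H_1(\Sigma,\R)\otimes\Lie_k)\text{ with no summand purely in }K^\perp\text{-directions}\,\}$, or more simply as the annihilator-type condition that picks out elements involving at least one factor from $K$. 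Since the automorphism group $Sp(2g,\R)$ acts transitively on Lagrangian subspaces and acts on $\im(\tau_k)^\R$ preserving the total grading structure, $\dim\mathbb{K}_K$ is independent of $K$.

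Next I would construct the local frame. Fix $K_0$ with adapted symplectic basis $\mathcal{B}_0=\{x_1^0,\dots,x_g^0,y_1^0,\dots,y_g^0\}$. There is a standard smooth local section $K \mapsto \mathcal{B}(K)$ of ``adapted symplectic bases'' over a neighborhood $U$ of $K_0$ in $\mathbb{L}$: one way is to note that the Lagrangian Grassmannian is $U(g)/O(g)$ and a local section of the bundle $Sp(2g,\R)\to\mathbb{L}$ gives, by applying a varying symplectic transformation $g(K)$ to $\mathcal{B}_0$, a smoothly varying adapted basis $\mathcal{B}(K)=g(K)\cdot\mathcal{B}_0$ with $g(K_0)=\mathrm{id}$. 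Now fix a basis $e_1,\dots,e_m$ of $\mathbb{K}_{K_0}\subset\im(\tau_k)^\R$ consisting of the relevant basic-commutator-type vectors adapted to $\mathcal{B}_0$. Applying the induced action of $g(K)$ on $\im(\tau_k)^\R$ — which is smooth in $K$ since it is polynomial in the entries of $g(K)$ and the action on the ambient tensor space $H_1(\Sigma,\R)\otimes\Lie_k$ is smooth, while $\im(\tau_k)^\R$ is an invariant subspace — produces vectors $g(K)\cdot e_1,\dots,g(K)\cdot e_m$. Because $g(K)$ carries $\mathcal{B}_0$ to the adapted basis $\mathcal{B}(K)$ and carries ``$x$-degree $\ge 1$'' elements to ``$x$-degree $\ge 1$'' elements for the new grading, these vectors lie in $\mathbb{K}_K$; being the image of a basis under a linear isomorphism, they are a basis of $\mathbb{K}_K$. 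Thus $K\mapsto(g(K)e_1,\dots,g(K)e_m)$ is a smooth frame, and the map $U\times\R^m \to E|_U$ sending $(K,\,t_1,\dots,t_m)\mapsto(K,\sum_j t_j\, g(K)e_j)$ is the required local trivialization.

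Finally I would check that these local trivializations are smoothly compatible — but this is automatic, since the transition functions are $GL(m,\R)$-valued and given by expressing one smooth frame in terms of another, hence smooth. Therefore $E$ satisfies the definition of a smooth vector bundle.

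The main obstacle is the bookkeeping in the middle step: one must be careful that the grading of $\im(\tau_k)^\R$ genuinely transforms under $g(K)$ the way the naming suggests, i.e.\ that applying $g(K)$ to the $\mathcal{B}_0$-adapted graded decomposition yields exactly the $\mathcal{B}(K)$-adapted graded decomposition. This rests on the fact (implicit in the construction preceding Proposition~\ref{prop:taugrading}, via Theorems~\ref{thm:tauisalg}, \ref{thm:taugen} and Morita's description of $\im(\tau_{1,2})$) that the grading is induced functorially from the splitting $H_1(\Sigma,\R)=K\oplus K'$ — the ``$x$-degree'' of a tensor is literally the number of tensor factors lying in $K$ — so it is manifestly natural under any symplectic change of basis carrying one Lagrangian splitting to another. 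Once this naturality is articulated cleanly, constancy of fiber dimension and smoothness of the frame both follow, and the rest is the routine verification above.
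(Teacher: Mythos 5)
Your argument is correct and follows essentially the same strategy as the paper's: over a small open set in $\mathbb{L}$, produce a smoothly varying symplectic basis of $H_1(\Sigma,\R)$ adapted to the varying Lagrangian, and then transport the grading and a graded basis of $\im(\tau_k)^\R$ along that family to obtain a local frame for $E$. The only (cosmetic) difference is the device used to get the adapted basis: you use a local section of $Sp(2g,\R)\to\mathbb{L}$ together with $Sp$-invariance of $\im(\tau_k)^\R$, while the paper trivializes the universal sub- and quotient bundles $\mathcal{S},\mathcal{Q}$ and exploits the canonical self-dual splitting $\mathbb{L}\times H_1(\Sigma,\R)\cong\mathcal{S}\oplus\mathcal{Q}$; both yield the same smoothly varying adapted basis, so the two proofs coincide in substance.
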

\begin{proof}
Let $\mathcal S$ be the universal bundle over $\mathbb L$.  That is, $\mathcal S$ is the subbundle of ${\mathbb L}\times H_1(\Sigma,\R)$ whose fiber over $K\in{\mathbb L}$ is $K$.  It is well known that $\mathcal S$ is a smooth (in fact holomorphic) vector bundle.  Let $\mathcal Q$ be the universal quotient bundle, that is the bundle over ${\mathbb L}$ whose fiber over $K$ is $H_1(\Sigma,\R)/K$.  $\mathcal Q$ is also a smooth vector bundle.  We have a short exact sequence of vector bundles:
\[
\begin{diagram}
0 & \rTo & {\mathcal S} & \rTo & {\mathbb L}\times H_1(\Sigma,\R) & \rTo & {\mathcal Q} & \rTo & 0 \\
  &      &              & \rdTo&  \dTo^p                            & \ldTo&              &      &   \\
  &      &              &      & {\mathbb L}                      &      &              &      &   \\
\end{diagram}
\]
This sequence is self-dual in the sense that there is a canonical commutative diagram:
\[
\begin{diagram}
0 & \rTo & {\mathcal S} & \rTo & {\mathbb L}\times H_1(\Sigma,\R) & \rTo & {\mathcal Q} & \rTo & 0 \\
  &      & \dTo^{\cong} &      &   \dTo^{\cong}                   &      & \dTo^{\cong} &      &   \\
0 & \rTo & \check{\mathcal Q} & \rTo & {\mathbb L}\times \check H_1(\Sigma,\R) & \rTo & \check{\mathcal S} & \rTo & 0 \\
\end{diagram}
\]
where $\check V$ denotes the dual of the vector space $V$.
Thus, we have a canonical isomorphism ${\mathbb L}\times H_1(\Sigma,\R)\cong{\mathcal S}\oplus{\mathcal Q}$.
Let $\{U_i\}_{i\in I}$ be an open cover of $\mathbb L$ which trivializes the universal subbundle and universal quotient bundle.  For $i\in I$, let ${\mathcal B}(u)=\{x_1(u),\ldots,x_g(u),y_1(u),\ldots,y_g(u)\}_{u\in U_i}$ be a smoothly varying basis for $H_1(\Sigma,\R)$ adapted to the decomposition $p^{-1}(U_i)\cong (U_i\times K)\oplus(U_i \times \check K)$.  That is $\{x_1(u),\ldots,x_g(u)\}$ is a basis for $K$.  One obtains from ${\mathcal B}(u)$ a smoothly varying basis for $\im(\tau_{k})^\R$ adapted to its grading, and thus, a smooth trivialization of $(q|_E)^{-1}(U_i)$.
\end{proof}

\begin{lemma}
If $K\in{\mathbb  L}$ is the kernel of an inclusion induced map $j_*:H_1(\Sigma,\R)\to H_1(\h,\R)$, that is, if $K=L^\R$, where $L$ is a Lagrangian subspace of $H_1(\Sigma,\z)$, then ${\mathbb K }_K=\ker(j_*:\im(\tau_k)^\R\to H_1(\h,\R)\otimes\bar\Lie_k)$.
\end{lemma}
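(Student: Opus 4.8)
The plan is to compute both sides of the asserted equality inside the vector space $H_1(\Sigma,\R)\otimes\Lie_k^\R$, exploiting the fact that every object involved is compatible with the $x$-grading of Subsection~\ref{subsec:E}, and that over $\R$ an ``ideal generated by a subspace'' depends only on that subspace. The single substantive input is an identification of $\ker(j_*\colon\Lie_k^\R\to\bar\Lie_k^\R)$ with the positive-degree part of that grading; once that is in hand the statement becomes a short piece of graded linear algebra.

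First I would fix the adapted symplectic basis $\mathcal B=\{x_1,\ldots,x_g,y_1,\ldots,y_g\}$ used to grade $\im(\tau_k)^\R$, so that $\{x_i\}$ spans $K$, and record that, since $K$ is a handlebody kernel, $j_*\colon H_1(\Sigma,\R)\to H_1(\h,\R)$ is the projection with kernel $K^\R=\mathrm{span}_\R(x_i)$, carrying the $y_i$ to a basis of $H_1(\h,\R)$. Next I would identify $\ker(j_*\colon\Lie_k^\R\to\bar\Lie_k^\R)$. By the proof of Lemma~\ref{h1} (which reduces, via Theorem~\ref{thm:labute}, to a computation in a free Lie ring), $\ker(j_*\colon\Lie_{(1)}^\R\to\bar\Lie^\R)$ is the ideal of $\Lie_{(1)}^\R$ generated by $K^\R$, i.e. by $\{x_1,\ldots,x_g\}$. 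In weight $k$ that ideal is spanned exactly by the basic commutators of weight $k$ in $\mathcal B$ that involve at least one $x_j$: such commutators lie in the ideal by induction on weight, the remaining weight-$k$ basic commutators span the free Lie ring on $\{y_i\}$, and the basic-commutator basis together with Witt's formula (Theorem~\ref{witt}) shows the dimensions add up. Thus $\ker(j_*\colon\Lie_{(1),k}^\R\to\bar\Lie_k^\R)=\bigoplus_{i\ge1}(\Lie_{1,k}^\R)_i$ in the grading of Subsection~\ref{subsec:E}. Since $\Lie=\Lie_{(1)}/\mathfrak I$ with $\mathfrak I$ homogeneous of $x$-degree one (recall $\Omega=\sum_i[x_i,y_i]$) and $\mathfrak I\subset\bar{\mathfrak I}$, Proposition~\ref{prop:Liebasics} gives $\ker(j_*\colon\Lie_k^\R\to\bar\Lie_k^\R)=\bigoplus_{i\ge1}(\Lie_k^\R)_i$, with complementary summand $(\Lie_k^\R)_0$ on which $j_*$ is an isomorphism onto $\bar\Lie_k^\R$.

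Then I would assemble the kernel on the tensor product. The map of the lemma is the tensor product of surjections $j_*\otimes j_*\colon H_1(\Sigma,\R)\otimes\Lie_k^\R\to H_1(\h,\R)\otimes\bar\Lie_k^\R$, whose kernel is $K^\R\otimes\Lie_k^\R+H_1(\Sigma,\R)\otimes\ker(j_*\colon\Lie_k^\R\to\bar\Lie_k^\R)$. Both summands consist of elements of positive total $x$-degree, and together they exhaust all such elements; the only graded piece they miss is the degree-zero piece $\mathrm{span}_\R(y_i)\otimes(\Lie_k^\R)_0$, on which $j_*\otimes j_*$ is injective. Hence $\ker(j_*\otimes j_*)=\bigoplus_{m=1}^{k+1}\bigl(H_1(\Sigma,\R)\otimes\Lie_k^\R\bigr)_m$. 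Finally, since $\im(\tau_k)^\R$ is a graded subspace (Proposition~\ref{prop:taugrading}) with $\im(\tau_k)^\R_m=\im(\tau_k)^\R\cap\bigl(H_1(\Sigma,\R)\otimes\Lie_k^\R\bigr)_m$, intersecting yields $\ker\bigl(j_*\colon\im(\tau_k)^\R\to H_1(\h,\R)\otimes\bar\Lie_k^\R\bigr)=\bigoplus_{m=1}^{k+1}\im(\tau_k)^\R_m={\mathbb K}_K$.

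The main obstacle is the middle step: showing that $\ker(j_*\colon\Lie_k^\R\to\bar\Lie_k^\R)$ is precisely the positive-$x$-degree part of $\Lie_k^\R$. This is where Lemma~\ref{h1} and Labute's theorem enter (to describe the kernel as the ideal generated by $K^\R$), together with the elementary but slightly delicate combinatorics of free Lie rings --- that the weight-$k$ part of the ideal generated by some of the free generators is spanned by the basic commutators using those generators --- and the observation that the symplectic relator $\Omega$ is homogeneous of $x$-degree one, so the grading descends cleanly from $\Lie_{(1)}$ to $\Lie$. Everything else is formal manipulation of gradings that have already been constructed.
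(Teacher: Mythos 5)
Your proposal is correct and follows essentially the same route as the paper: identify $\ker(j_*\colon\Lie_k\to\bar\Lie_k)$ with the positive-$x$-degree summand via Lemma~\ref{h1}/Labute, compute the kernel of the tensored map as $L\otimes\Lie_k + H_1(\Sigma)\otimes\ker(j_*)$ and recognize it as the positive-degree part, then intersect with $\im(\tau_k)$. The one place you diverge is in rigor rather than strategy: where the paper asserts that the graded piece of the ideal generated by $L$ ``is clear from the definition,'' you correctly flag and sketch the small free-Lie-ring argument (basic commutators involving an $x$ span the ideal, with the Jacobi identity/induction on weight; $\Omega$ homogeneous so the grading descends to $\Lie$), and you correctly write the kernel of $j_*\otimes j_*$ as a sum rather than the paper's slightly abusive $\oplus$.
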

\begin{proof}
As in the proof of Lemma~\ref{h1}, $\ker(j_*:\Lie_k\to \bar\Lie_k)$ is the $k^{\textrm{th}}$ graded piece of the Lie ring ideal generated by $L$,   which---it is clear from the definition---is exactly:
\[
(\Lie_k)_K:=\bigoplus_{i=1}^k(\Lie_k)_i.
\]
Thus, $\ker(j_*:H_1(\Sigma)\otimes\Lie_k\to H_1(\h)\otimes\bar\Lie_k)$ is
\[
(L\otimes\Lie_k)\oplus\big( H_1(\Sigma)\otimes (\Lie_k)_K\big)=\bigoplus_{i=1}^{k+1}\big(H_1(\Sigma)\otimes\Lie_{k}\big)_i
\]
and
\begin{align*}
\ker\big(j_*:\im(\tau_k)\to H_1(\h)\otimes\bar\Lie_k\big)=&\im(\tau_k)\cap\bigoplus_{i=1}^{k+1}\big(H_1(\Sigma)\otimes\Lie_{k}\big)_i\\
=&\bigoplus_{i=1}^{k+1}\im(\tau_k)_i.
\end{align*}
Therefore,
\begin{align*}
\ker\big(j_*:\im(\tau_k)^\R\to H_1(\h,\R)\otimes\bar\Lie_k\big)=&\bigoplus_{i=1}^{k+1}\im(\tau_k)_i^\R\\
=&{\mathbb K}_K.
\end{align*}
\end{proof}

\subsection{Reduction to a dimensional argument}\label{subsec:reduce}

By restriction, there is a smooth map $P=p_2\vert_E:E\to \im(\tau_k)^\R$.
Notice that if a vector $v\in\im(\tau_k)^\R$ is in the kernel of some inclusion induced homomorphism $j_*:\im(\tau_k)^\R\to H_1(\h,\R) \otimes \bar\Lie_{k}^\R$, then $v$ is in the image of $P$.  Therefore, to prove the existence of robust homeomorphisms it suffices to show that $P$ does not hit all of the points of $\im(\tau_k)$, which we think of as an integer lattice in $\im(\tau_k)^\R$.   
Restricted to a fiber, $q|_{E}^{-1}(K)$, $P$ is just the inclusion ${\mathbb K}_K\hookrightarrow \im(\tau_k)$.  In other words, we can write $\im(P)$ as
\[
\im(P)=\bigcup_{K\in {\mathbb L}}{\mathbb K}_K.
\]
Thus, for any $v\in\im(P)$, there is some $K\in{\mathbb L}$ such that $v\in{\mathbb K}_K$, which is a subspace of $\im(\tau_k)^\R$.  Hence, $rv\in{\mathbb K}_K\subset\im(P)$ for any real number $r$.
That is, a non-zero vector $v\in\im(\tau_k)^\R$ is in the image of $P$ if and only if $rv$ is in the image of $P$, for all non-zero, real numbers $r$.  It therefore suffices to show that there is a rational point $v\in\im(\tau_k)^{\mathbb  Q}\subset\im(\tau_k)^\R$ not hit by $P$, because then $Nv\in\im(\tau_k)\subset\im(\tau_k)^{\mathbb  Q}\subset\im(\tau_k)^\R$ is an integer point (for some positive integer $N$) not hit by $P$.  
\begin{proposition}
To prove the existence of robust homeomorphisms, it suffices to show:
\begin{equation}\label{ineq}
\dim E<\dim\im(\tau_k)^\R .
\end{equation}
\end{proposition}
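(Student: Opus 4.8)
The plan is to assume the inequality \eqref{ineq} and deduce that $P$ fails to be surjective in a strong way: its image misses a point of the integer lattice $\im(\tau_k)\subset\im(\tau_k)^\R$. By the reduction already carried out in this subsection, such a missed lattice point is $\tau_k[f]$ for a robust homeomorphism $f$, which is exactly what is wanted. The mechanism is the elementary fact that the image of a $C^1$ map from a smooth manifold into a manifold of strictly larger dimension has Lebesgue measure zero (a weak form of Sard's theorem, provable directly by a covering-and-Lipschitz estimate). Applying this to the smooth map $P\colon E\to\im(\tau_k)^\R$, inequality \eqref{ineq} immediately gives that $\im(P)$ has measure zero in $\im(\tau_k)^\R$.

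Measure zero alone does not yet locate a \emph{rational} point outside $\im(P)$, since $\im(\tau_k)^{\mathbb Q}$ is only countable and a countable set can lie inside a measure-zero set. To rule this out I would show that $\im(P)$ is also \emph{closed}. Give $E$, as a subbundle of the trivial bundle ${\mathbb L}\times\im(\tau_k)^\R$, the fibrewise norm induced from a fixed inner product on $\im(\tau_k)^\R$; then on each fibre $P$ is an isometric linear inclusion ${\mathbb K}_K\hookrightarrow\im(\tau_k)^\R$. Hence $\im(P)$ is a cone: writing $S(E)$ for the unit sphere bundle, $\im(P)=\{0\}\cup\{r\,u : r>0,\ u\in P(S(E))\}$, which is consistent with the homogeneity observation $v\in\im(P)\Leftrightarrow rv\in\im(P)$ already recorded above. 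Since the Lagrangian Grassmannian ${\mathbb L}$ is compact, $S(E)$ is compact, so $P(S(E))$ is a compact subset of $\im(\tau_k)^\R$ not containing $0$; the cone on such a set (with the origin adjoined) is closed, as a standard sequence argument shows.

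Being closed and of measure zero, $\im(P)$ has open, dense complement in $\im(\tau_k)^\R$. The rational points $\im(\tau_k)^{\mathbb Q}$ are dense, so this complement contains some $v\in\im(\tau_k)^{\mathbb Q}\setminus\im(P)$; clearing denominators gives a positive integer $N$ with $Nv\in\im(\tau_k)$, and $Nv\notin\im(P)$ by the cone property. By the reduction of this subsection, $Nv=\tau_k[f]$ for a robust homeomorphism $f$, which proves the proposition.

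I do not expect any real difficulty in this argument: it is soft and essentially formal, the only delicate point being the passage from ``a real point is avoided'' (immediate from the dimension count) to ``a rational point is avoided'', which the closedness of $\im(P)$—in turn coming from compactness of ${\mathbb L}$—settles. The genuine obstacle, and the business of the remaining subsections, is verifying \eqref{ineq} itself: one must compute $\dim E=\dim{\mathbb L}+\mathrm{rank}\,E=g(g+1)/2+\dim{\mathbb K}_K$ and then produce a lower bound for $\dim\im(\tau_k)^\R$ that beats it once $g$ is large, using Hain's generation theorem (Theorem~\ref{thm:taugen}) and a Levine-type estimate.
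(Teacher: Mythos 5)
Your proof is correct and takes essentially the same route as the paper: both reduce to Sard's theorem plus a compactness observation to make the image of $P$ closed (you compactify via the unit sphere bundle $S(E)$; the paper projectivizes both $E$ and the target), after which the open, dense complement must contain a rational point, and clearing denominators finishes the argument. You are slightly more careful than the paper in isolating why closedness is essential — density of the complement alone would not produce a rational point since the rationals are countable — but the logical skeleton is identical.
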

\begin{proof}
Since $P$ is linear on the fibers we may projectivize the fibers of $E$ (call this projectivized space $\bar E$) as well as the target space $H_1(\h,\R)\otimes\Lie_k$ (call this $\bar V$) and $P$ descends to a map $\bar P:\bar E\to \bar V$.  Since $\bar E$ is compact so is the image of $\bar P$.  Thus $\im(\bar P)$ is closed, but by the Morse-Sard theorem (see, for instance, \cite{hirsch}) and since $\bar P$ is smooth, $\bar V-\im(\bar P)$ is non-empty (in fact dense), and thus, contains a rational point.  By the above discussion, this is sufficient to prove the proposition.
\end{proof}

We will eventually prove Inequality~\eqref{ineq} for large genera, but first we manipulate it into a more workable form. The dimension of the vector bundle $E$ is the sum of the dimensions of the base and fiber:
\[
\dim E=\dim{\mathbb  L}+\dim\ker\big(j_*:\im(\tau_k)^\R\to H_1(\h,\R) \otimes \bar\Lie_{k}\big),
\]
where we have restricted attention to a fixed (but arbitrary) fiber.  That is to say, we are considering a fixed handlebody $j:\Sigma\hookrightarrow\h$ bounded by $\Sigma$.  In fact we will see in the following subsection that we wish to choose $j:\Sigma\hookrightarrow\h$ such that the curves $a_i$ bound disks in $\h$.  Throughout the remainder of this chapter $\h$ will refer to this fixed handlebody.  We can rewrite the dimension of the above kernel using the rank-nullity theorem:
\[
\dim\ker\big(j_*:\im(\tau_k)^\R\to H_1(\h,\R) \otimes \bar\Lie_{k}^\R\big)=\dim\im(\tau_k)^\R-\dim\im(j_*\circ\tau_k)^\R.
\]
Then Inequality~\eqref{ineq} becomes
\[
\dim{\mathbb L}+\dim\im(\tau_k)^\R-\dim\im(j_*\circ\tau_k)^\R<\dim\im(\tau_k)^\R.
\]
Eliminating $\dim\im(\tau_k)^\R$  from each side and using $\dim{\mathbb L}=g(g+1)/2$ we get the equivalent inequality
\begin{equation}\label{simpineq}
\frac{g(g+1)}{2}<\dim\im(j_*\circ\tau_k)^\R.
\end{equation}
Thus, we need an estimate on $\dim\im(j_*\circ\tau_k)^\R$.

\subsection{Levine's estimate for $\dim\im(j_*\circ\tau_k)^\R$}\label{subsec:levine}

We use an estimate provided by Levine in \cite{L} to prove Inequality~\eqref{simpineq}.  Denote by $S_{g+1}$ the sphere with $g+1$ boundary components.  The framed pure braid group on $g$ strands, which we denote by $\p^{\mathrm{fr}}_g$, can be defined as the mapping class group of $S_{g+1}$ \emph{relative} to the boundary (i.e. homeomorphisms and isotopies are required to fix the boundary pointwise).  If we denote the regular pure braid group by $\p_g$, then there is a canonical isomorphism $\p^{\mathrm{fr}}_g\cong\p_g\oplus\z^g$.  Thus, we may think of $\p_g$ as a subgroup of $\p^{\mathrm{fr}}_g$.
For any embedding of $S_{g+1}$ into the surface $\Sigma_1$, there is an induced homomorphism from $\p^{\mathrm{fr}}_g$ to the mapping class group $\Mod(\Sigma,D)$.  Thus, we also have an induce homomorphism $\p_g\to\Mod(\Sigma,D)$.  
Let $\h'$ be the genus $g$ handlebody $S_{g+1}\times[0,1]$.  We consider $S_{g+1}$ to be identified with $S_{g+1}\times\{1\}$ in $\partial\h'$. Pick a homeomorphism from $\h'$ to $\h$ such that $S_{g+1}$ misses the interior of the disk $D$ in $\Sigma=\partial\h$.  This gives an embedding of $S_{g+1}$ into $\Sigma_1$.  We consider $\h'$ and $\h$ to be identified via this homeomorphism.  We denote by $\theta:\p_g\to\Mod(\Sigma,D)$ the induced homomorphism.
  
\begin{figure}[h!tp]
\centering
\begin{picture}(435,210)(0,0)
\includegraphics[width=.98\textwidth]{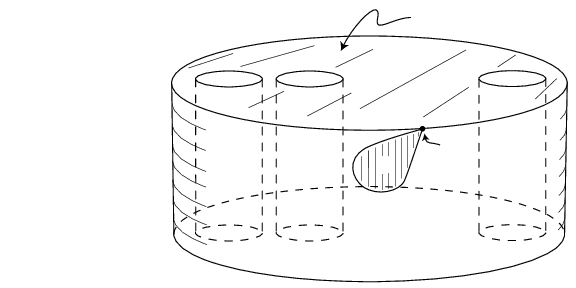}
\put(-435,100){\small$\h=\h'=S_{g+1}\times[0,1]\left\{\phantom{\begin{array}{c}0\\0\\0\\0\\0\\0\\0\\0\\0\\0\\0\\0\end{array}}\right.$}
\put(-150,88){$D$}
\put(-103,104){\small$x_0$}
\put(-124,199){\small$S_{g+1}=S_{g+1}\times\{1\}$}
\put(-170,151){\Huge$\cdots$}
\end{picture}
\caption{Embedding $S_{g+1}$ into $\Sigma=\partial\h$.}\label{disk}
\end{figure}

The following lemma is attributed to Oda.  We omit the proof, but it is given in \cite{L} and \cite{GH}.
\begin{lemma}[Oda]\label{oda}
The homomorphism $\theta:\p_g\to\Mod(\Sigma,D)$ is an embedding and furthermore, $\theta$ induces embeddings $\rho:(\p_g)_k/(\p_g)_{k+1}\to\J(k)/\J(k+1)$, for each $k$.  Equivalently, (since $\ker\tau_{1,k}=\J(k+1)$) $\tau_{1,k}\circ\rho:(\p_g)_k/(\p_g)_{k+1}\to H_1(\Sigma) \otimes \Lie_{k}$ is an embedding.
\end{lemma}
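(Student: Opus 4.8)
The plan is to reduce both claims --- that $\theta$ is injective and that the induced maps $\rho$ on successive quotients are injective --- to classical facts about the Artin representation of $\p_g$ and the lower central series of $\p_g$. The organizing observation is that the action of $\theta(\beta)$ on $\pi_1(\Sigma_1)$ is, up to the inclusion of a subsurface, the classical Artin action. Write $F:=\pi_1(S_{g+1})$, free of rank $g$. The chosen embedding $S_{g+1}\hookrightarrow\Sigma_1$ is $\pi_1$-injective since $S_{g+1}$ is incompressible, and in fact $F$ is a retract (hence a free factor) of $\pi_1(\Sigma_1)$: the product structure $\h=S_{g+1}\times[0,1]$ makes $\pi_1(S_{g+1})\to\pi_1(\h)$ an isomorphism, so $\pi_1(S_{g+1})\hookrightarrow\pi_1(\Sigma_1)$ splits $\pi_1(\Sigma_1)\twoheadrightarrow\pi_1(\h)$. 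Since $\theta(\beta)$ is supported inside $S_{g+1}$ and restricts there to the mapping class underlying $\beta$, one may choose a free basis of $\pi_1(\Sigma_1)$ half of which is a basis of $F$ on which $\theta(\beta)$ acts by the Artin action $x_i\mapsto w_i x_i w_i^{-1}$ of $\beta$, the other half being fixed by $\theta(\beta)$. Everything is transported across this picture.

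First I would prove $\theta$ injective: if $\theta(\beta)$ is isotopic to the identity of $\Sigma$ then its action on $\pi_1(\Sigma_1)$ is inner, so the Artin action of $\beta$ is trivial, and Artin's faithfulness theorem forces $\beta=1$. Next I would check that $\theta$ carries the lower central series of $\p_g$ into the Johnson filtration. The input here is that a standard pure-braid generator acts on $F$ by conjugation by a \emph{weight-one} word, so already lies in $\I=\J(2)$; iterating, and using $[\J(i),\J(j)]\subseteq\J(i+j-1)$ together with functoriality of the lower central series along the free-factor inclusion $F\hookrightarrow\pi_1(\Sigma_1)$, gives the containment $\theta\big((\p_g)_k\big)\subseteq\J(k)$ that makes $\rho$ well defined on $(\p_g)_k/(\p_g)_{k+1}$. (Here $(\p_g)_\bullet$ is the lower central series indexed as in \cite{L} and \cite{GH} so as to align with the Johnson filtration --- the image of $\p_g$ being entirely inside the Torelli group forces the index shift.) At the level of $\mathrm{Aut}\,F$ this is just the statement that the Artin representation is filtered with respect to the Andreadakis filtration $A_\bullet(F):=\{\phi\in\mathrm{Aut}\,F:\phi(x)x^{-1}\in F_\bullet\text{ for all }x\}$.

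Finally, by the noted identity $\ker\tau_{1,k}=\J(k+1)$, it remains to show $\tau_{1,k}\circ\rho$ is injective on $(\p_g)_k/(\p_g)_{k+1}$. Under the identification above this map factors as $(\p_g)_k/(\p_g)_{k+1}\to\mathrm{gr}^{A}(\mathrm{Aut}\,F)$, followed by the tautological injection $\mathrm{gr}^{A}(\mathrm{Aut}\,F)\hookrightarrow\bigoplus_j\Hom(H_1(F),\Lie_j(F))$ given by $\phi\mapsto(\bar x\mapsto\overline{\phi(x)x^{-1}})$, followed by the map induced by the free-factor inclusion $F\hookrightarrow\pi_1(\Sigma_1)$. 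Thus injectivity of $\tau_{1,k}\circ\rho$ is equivalent to \emph{faithfulness of the associated graded Artin representation} --- injectivity of the induced morphism of graded Lie rings $\Lie(\p_g)\to\mathrm{gr}^{A}(\mathrm{Aut}\,F)$. I expect this to be the main obstacle: it is not formal, since injectivity of a group homomorphism does not descend to associated graded rings, and it needs real structural input --- Kohno's theorem that $\Lie(\p_g)$ is the Drinfeld--Kohno (infinitesimal pure braid) Lie ring, equivalently Falk--Randell's description of $\p_g$ as an iterated almost-direct product of free groups arising from the Fadell--Neuwirth fibrations, from which one reads that $\Lie(\p_g)$ is torsion free with explicit generators and relations and then verifies that the derivations of $\Lie(F)$ it produces remain linearly independent in each degree. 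That verification is the heart of the matter and is exactly what is carried out in \cite{L} and \cite{GH}.
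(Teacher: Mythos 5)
The first thing to say is that the thesis does not prove Lemma~\ref{oda} at all: it explicitly omits the proof and refers to \cite{L} and \cite{GH}. Your write-up ultimately does the same for the decisive point --- after reducing the lemma to injectivity of the induced map of graded Lie rings (faithfulness of the ``associated graded Artin representation''), you state that this verification is exactly what is carried out in \cite{L} and \cite{GH}. So, as written, your proposal is a reduction to the cited literature rather than a proof; that would be acceptable in spirit, since it matches what the thesis itself does, provided the reduction were sound. It is not sound in one essential place.

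The false step is the claim that $\theta(\beta)$ fixes a complementary half-basis of $\pi_1(\Sigma_1)$, and hence that a pure braid generator lands in $\I=\J(2)$, with the attendant ``index shift'' bookkeeping. The conjugation formula only describes the action on the free factor $\pi_1(S_{g+1})=\langle b_1,\ldots,b_g\rangle$; the complementary curves $a_i$ necessarily run through the support of the homeomorphism, and the paper's own computation (proof of Lemma~\ref{levine}) gives $a_i\mapsto\lambda_i a_i$, so on homology $\alpha_i\mapsto\alpha_i+[\lambda_i]$ with $[\lambda_i]\ne0$ for an Artin generator. Geometrically, $\theta$ of the full twist of strands $r$ and $s$ is, up to twists about the curves $B_i$, the Dehn twist about a curve homologous to $\beta_r+\beta_s$, a nontrivial transvection on $H_1(\Sigma)$ which no framing correction can cancel. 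Hence $\theta(\p_g)\not\subset\I$. Moreover, the bracket property $[\J(i),\J(j)]\subset\J(i+j-1)$ cannot deliver the needed compatibility: from your (false) premise $\theta(\p_g)\subset\J(2)$ it would give $\theta\big((\p_g)_k\big)\subset\J(k+1)$, forcing $\rho=0$ and contradicting the lemma, while from the true premise $\theta(\p_g)\subset\J(1)$ it gives nothing. The statement actually required is $\theta\big((\p_g)_k\big)\subset\J(k)$ with \emph{equal} indices for the honest lower central series (this is also how the lemma is used later, in combination with Theorem~\ref{thm:fr} and the Witt formula), and it is not formal: it amounts to showing that a braid in $(\p_g)_k$ has Artin conjugating words $\lambda_i\in F_k$, which needs braid-theoretic input (Magnus expansion/Milnor-invariant or Falk--Randell almost-direct-product arguments) of the same kind as the injectivity you deferred --- i.e.\ it is part of what \cite{L} and \cite{GH} prove, not a consequence of the filtration axioms. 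The injectivity of $\theta$ itself via Artin faithfulness is fine.
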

Recall that we have defined two inclusion maps, $i:\Sigma_1\hookrightarrow\Sigma$ and $j:\Sigma\hookrightarrow\h$.  Let $\tilde j:\Sigma_1\hookrightarrow\h$ be the composition $j\circ i$.
Levine has shown that:
\begin{lemma}[Levine]\label{levine}
\[
\tilde j_*\circ\tau_{1,k}\circ\rho:(\p_g)_k/(\p_g)_{k+1}\to H_1(\h) \otimes \bar\Lie_{k}
\]
is an embedding.  
\end{lemma}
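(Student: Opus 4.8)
The plan is to compose the known embedding $\tau_{1,k}\circ\rho\colon(\p_g)_k/(\p_g)_{k+1}\to H_1(\Sigma)\otimes\Lie_k$ (Lemma~\ref{oda}) with the map $j_*$ and to analyze the kernel of $j_*$ restricted to the image. Concretely, $\tilde j_*\circ\tau_{1,k}\circ\rho = j_*\circ i_*\circ\tau_{1,k}\circ\rho$, and by Lemma~\ref{lemma1} we have $i_*\circ\tau_{1,k}=\tau_k$ on $\J(k)$, so $\tilde j_*\circ\tau_{1,k}\circ\rho = j_*\circ\tau_k\circ\rho$. Since $\tau_{1,k}\circ\rho$ is already injective, it suffices to show that the image of $\tau_k\circ\rho$ meets the kernel of $j_*\colon H_1(\Sigma)\otimes\Lie_k\to H_1(\h)\otimes\bar\Lie_k$ only in $0$.

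The key is the computation of $\ker(j_*\colon H_1(\Sigma)\otimes\Lie_k\to H_1(\h)\otimes\bar\Lie_k)$ carried out in the proof of Lemma~\ref{h1} and the lemma following Proposition~\ref{prop:taugrading}: with respect to a symplectic basis $\{x_1,\dots,x_g,y_1,\dots,y_g\}$ for which $\{x_1,\dots,x_g\}=\{\alpha_1,\dots,\alpha_g\}$ spans $\ker(j_*\colon H_1(\Sigma)\to H_1(\h))$ — i.e.\ the $a_i$ bound disks in $\h$, exactly the handlebody $\h$ fixed in Section~\ref{subsec:reduce} — the kernel is the span of all basis elements $b\otimes c$ (with $b\in\mathcal B$, $c$ a basic commutator of weight $k$) in which \emph{at least one} $x$ appears, equivalently $\bigoplus_{i=1}^{k+1}(H_1(\Sigma)\otimes\Lie_k)_i$ in the grading of Subsection~\ref{subsec:E}. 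So a class lies in $\ker j_*$ iff its degree-$0$ component (the part supported on the $y$'s and on basic commutators in the $y_i$ alone) vanishes. Thus I must show: the degree-$0$ part of $\tau_k(\rho(w))$ is nonzero whenever $0\neq w\in(\p_g)_k/(\p_g)_{k+1}$.

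This is where the structure of the pure braid embedding $\theta\colon\p_g\to\Mod(\Sigma,D)$ enters, and I expect it to be the main obstacle. The point is that $\theta$ is built from the embedding $S_{g+1}\hookrightarrow\Sigma_1$ realizing $\h=S_{g+1}\times[0,1]$, and the curves $b_i$ (the $y$'s) are essentially the ``longitudes'' of the strands of the braid — they generate the free group $\pi_1(S_{g+1})$ sitting inside $\pi_1(\Sigma_1)$. A pure braid $\beta$ acts on this free subgroup by the Artin representation, and for $[\beta]\in(\p_g)_k$ the associated Johnson-type homomorphism records exactly the degree-$k$ part of the Artin action on the $b_i$'s — that is, the values $\beta_*(b_i)b_i^{-1}\bmod\pi_1(\Sigma_1)_{k+1}$ expressed in terms of basic commutators in the $b_i$'s, which is precisely the degree-$0$ component of $\tau_{1,k}(\rho(w))$. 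The injectivity half of Oda's Lemma~\ref{oda} already tells us $\tau_{1,k}\circ\rho$ is injective; what remains is to observe that this injective invariant \emph{lands entirely in degree $0$}, since the Artin action changes the $b_i$ only by words in the $b_i$ (never introducing an $a_i$). Hence the degree-$0$ projection of $\tau_{1,k}(\rho(w))$ equals $\tau_{1,k}(\rho(w))$ itself, which is nonzero for $w\neq0$; a fortiori its image under $j_*$ (which is faithful on the degree-$0$ part) is nonzero. Since $j_*\circ\tau_k\circ\rho$ agrees with $\tilde j_*\circ\tau_{1,k}\circ\rho$, this gives the embedding claim.

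The one genuine subtlety to nail down carefully is the compatibility of the grading with the braid picture: I must verify that with the chosen basis — $\{\alpha_i\}$ spanning $\ker(j_*\colon H_1(\Sigma)\to H_1(\h))$ and the $S_{g+1}$-longitudes being the $b_i$ — the Artin representation really is ``triangular'' in the sense of preserving degree $0$, so that no cancellation can push $\tau_{1,k}(\rho(w))$ into the higher-degree (kernel) summands. Granting Oda's lemma and the kernel computation from Lemma~\ref{h1}, this is the only step requiring real work, and it is essentially a direct inspection of the action of pure braid generators $A_{ij}$ on the free generators $b_1,\dots,b_g$.
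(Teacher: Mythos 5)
Your proposal matches the paper's own proof in its essential structure: reduce via Lemma~\ref{lemma1} to the equivalent statement for $j_*\circ\tau_k\circ\rho$, note from Lemma~\ref{oda} that injectivity of $\tau_{1,k}\circ\rho$ is already known, observe that the image lands entirely in the ``degree-zero'' summand (commutators purely in the $\beta_i$'s, tensored with $\beta_i$'s), and observe that $j_*$ is faithful there. The paper carries this out by computing $\tau_k(\theta(x))=-\sum_i\beta_i\otimes[\lambda_i]$ directly.

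One detail in your narration is off and would mislead a reader trying to turn the outline into a proof: the nontrivial content of $\tau_{1,k}(\rho(w))$ does \emph{not} come from $\beta_*(b_i)b_i^{-1}$. With $\theta(x)$ inducing $b_i\mapsto\lambda_i b_i\lambda_i^{-1}$ and $a_i\mapsto\lambda_i a_i$, one has $\beta_*(b_i)b_i^{-1}=[\lambda_i,b_i]$, and since $\lambda_i\in\pi_1(\Sigma_1)_k$ this commutator already lies in $\pi_1(\Sigma_1)_{k+1}$, so its class in $\Lie_{1,k}$ is zero. The Johnson homomorphism picks up its content from the disk-bounding curves: $\beta_*(a_i)a_i^{-1}=\lambda_i$, and it is the fact that each $\lambda_i$ is a word in the $b_j$'s (hence degree $0$, hence faithfully seen by $j_*$) that drives the argument. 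Your ``triangularity'' intuition is right, but you need both observations: (a) the $a_i$-contribution $[\lambda_i]$ is degree $0$, and (b) the $b_i$-contribution vanishes identically in $\Lie_{1,k}$; otherwise the claim that $\tau_{1,k}(\rho(w))$ lies entirely in degree $0$ is not justified.
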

\begin{proof}
Let $B_1,\ldots,B_{g+1}$ be the components of $\partial S_{g+1}$.  Select a basepoint $x_0'$ in $B_{g+1}$.  For convenience let us assume that $x_0'$ gets mapped to the basepoint $x_0$ and thus identify these two points.  Likewise let $x_i$ be a point on $B_i$ for $i=1,\ldots,g$.  Choose simple, pairwise disjoint arcs $A_i$ running  from $x_0$ to $x_i$.  
\begin{figure}[h!tp]
\centering
\begin{picture}(435,210)(0,0)
\includegraphics[width=.98\textwidth]{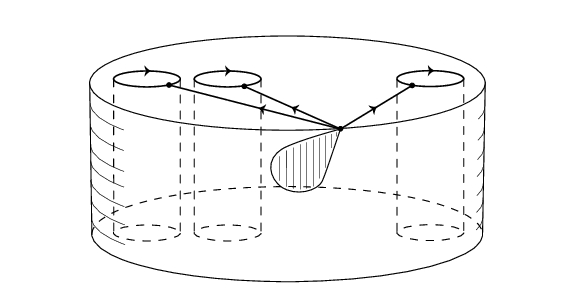}
\put(-176,111){\small$x_0$}
\put(-315,143){\small$x_1$}
\put(-256,143){\small$x_2$}
\put(-130,143){\small$x_g$}
\put(-253,125){$A_1$}
\put(-211,135){$A_2$}
\put(-154,128){$A_g$}
\put(-325,167){$B_1$}
\put(-260,167){$B_2$}
\put(-128,168){$B_g$}
\put(-235,151){\Huge$\cdots$}
\end{picture}
\caption{The arcs $A_i$ and closed curves $B_i$ used to define $a'_i$ and $b'_i$.}\label{fig:AB}
\end{figure}
Let $a'_i$ be the oriented simple closed curve obtained by following $A_i (=A_i\times\{1\})$ to $x_i$, then following the arc $\{x_i\}\times[0,1]$, followed by $A_i\times\{0\}$ and finally $\{x_0\}\times[0,1]$ arriving at the basepoint $x_0 (=x_0\times1)$.  See Figure~\ref{fig:ab}.
Let $b'_i$ be the oriented simple closed curve based at $x_0$ obtained by traversing the arc $A_i$, going around $B_i$ (in a direction such that $a'_i\cdot b'_i=1$) and following $A_i$ back to $x_0$.    
\begin{figure}[h!tp]
\centering
\begin{picture}(435,210)(0,0)
\includegraphics[width=.98\textwidth]{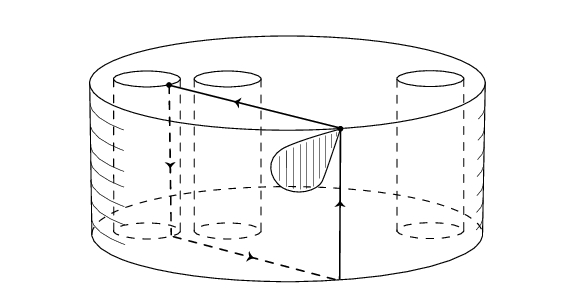}
\put(-175,55){$a'_1$}
\put(-235,151){\Huge$\cdots$}
\end{picture}
\caption{The curve $a'_1$.}\label{fig:ab}
\end{figure}

We assume, without loss of generality, that $a'_i$ is homotopic to $a_i$ and that $b'_i$ is homotopic to $b_i$.
Thus, $\pi_1(\Sigma_1)$ is freely generated by $\{a_1,\ldots,a_g,b_1,\ldots,b_g\}$, and  $\pi_1(\h)=\pi_1(S_{g+1})$ (identified by the inclusion-induced map) is freely generated by $\{b_1,\ldots,b_g\}$.  Of course the $a_i$ bound disks in $\h$ and normally generate the kernel of the inclusion-induced map $\pi_1(\Sigma_1)\to\pi_1(\h)$.  Since the union of arcs $\cup A_i$ cut $S_{g+1}$ into a disk we may use the Alexander method (see \cite{primer}) to characterize its (relative) mapping classes.  In particular, an element $x\in\p^{\mathrm{fr}}_g$ is determined by an automorphism of $\pi_1(S_{g+1})$ of the form:
\begin{equation}\label{eq:auto}
b_i\mapsto\lambda_ib_i\lambda_i^{-1},
\end{equation}
where the $\lambda_i$ are elements of $\pi_1(S_{g+1})$ satisfying: 
\[
\lambda_1b_1\lambda_1^{-1}\cdots\lambda_gb_g\lambda_g^{-1}=b_1\cdots b_g.
\]
See \cite{L}.  We take $x$ to be in the subgroup $\p_g$.  The mapping class $\theta(x)\in\Mod(\Sigma,D)$ induces the automorphism
\[
b_i\mapsto\lambda_ib_i\lambda_i^{-1},\quad a_i\mapsto \lambda_ia_i.
\]
Assume that $\theta(x)\in\J(k)$.  It follows that $a_i\mapsto \lambda_i a_i$ must be the identity modulo $\pi_1(\Sigma)_{k}$.  That is, $\lambda_i\in\pi_1(\Sigma)_{k}$.  We compute $\sigma\circ\theta(x)$, where $\sigma:\J(k)\to\Hom(H_1(\Sigma),\Lie_k)$ is as defined in Section~\ref{sec:johnsonhom}.  We have that $\sigma\circ\theta(x)$ is the homomorphism:
\begin{equation}\label{sigma}
\beta_i\mapsto[[\lambda_i, b_i]],\quad \alpha_i\mapsto [\lambda_ia_ia_i^{-1}]=[\lambda_i],
\end{equation}
where $\alpha_i$ and $\beta_i$ are the homology classes of $a_i$ and $b_i$, respectively. Note that we have used $[\ ,\ ]$ to mean the commutator and $[\ \ ]$ to mean equivalence class in $\Lie_k$.  Also note that $[\lambda_i, b_i]\in[\pi_1(\Sigma_1),\pi_1(\Sigma_1)_k]=\pi_1(\Sigma_1)_{k+1}$.  Thus, the homomorphism \eqref{sigma} simplifies to:
\[
\beta_i\mapsto0,\quad\alpha_i\mapsto[\lambda_i].
\]
We now compute $\tau_k\circ\theta(x)=\eta^{-1}\circ\sigma\circ\theta(x)$.  We claim that
\[
\tau_k\circ\theta(x)=-\sum_i\beta_i\otimes[\lambda_i].
\]
By the definition given in Section~\ref{sec:johnsonhom} we have
\begin{align*}
\eta\left(-\sum_i\beta_i\otimes[\lambda_i]\right)(\alpha_j)&=-\sum_i\omega(\beta_i,\alpha_j)[\lambda_i]=[\lambda_j],\quad\textrm{and}\\
\eta\left(-\sum_i\beta_i\otimes[\lambda_i]\right)(\beta_j)&=-\sum_i\omega(\beta_i,\beta_j)[\lambda_i]=0,
\end{align*}
verifying the claim.  Since the $\lambda_i$ are words in the $b_i$, it follows that if $\sum_i\beta_i\otimes[\lambda_i] $ is not zero, then neither is $\tilde j_*(\sum_i\beta_i\otimes[\lambda_i])$, and the result follows from Lemma~\ref{oda}.

\end{proof}
It follows immediately from Levine's result (since $\tilde j=j\circ i$) that
\[
j_*\circ\tau_{k}\circ\rho:(\p_g)_k/(\p_g)_{k+1}\to H_1(\h) \otimes \bar\Lie_{k}
\]
is an embedding.

Let $F^{g-1}$ be the free group on $g-1$ generators.  There is a split exact sequence:
\begin{equation}\label{eq:pbraid}
1\to F^{g-1}\to\p_g\to\p_{g-1}\to 1.
\end{equation}
This exact sequence has a nice geometric description which we give now.  We think of $\p_g$ as the mapping class group of a disk $\Delta$ (relative its boundary) with $g$ marked points, each with a distinct marking.  The injection $F^{g-1}$ is given by thinking of $F^{g-1}$ as the fundamental group of $\Delta$ with $g-1$ punctures.  We then identify each of the punctures with the first $g-1$ marked points and identify the basepoint of the fundamental group with the $g^\textrm{th}$ puncture.  The injection $F^{g-1}=\pi_1(\Delta-(g-1\ \textrm{punctures}))\to\p_g$ is then given by the push map (see  \cite{primer}).
\begin{figure}[h!tp]
\centering
\includegraphics[width=.98\textwidth]{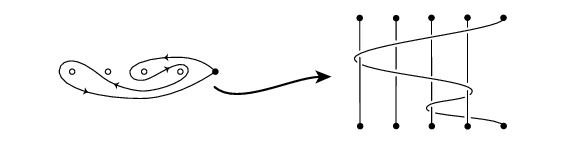}
\caption{The injection $F^{g-1}\to\p_g$.}
\end{figure}
The surjection $\p_g\to\p_{g-1}$ is given by forgetting the $g^\textrm{th}$ marked point.  
\begin{figure}[h!tp]
\centering
\includegraphics[width=.98\textwidth]{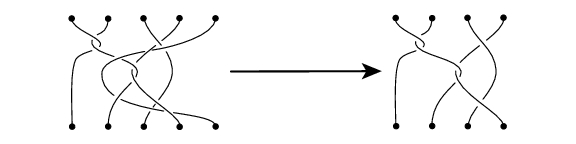}
\caption{The surjection $\p_g\to\p_{g-1}$.}
\end{figure}
It is clear that the sequence is exact.  There is also an easily understood splitting $\p_{g-1}\to\p_g$ given by simply appending a $g^\textrm{th}$ straight strand to any $(g-1)$-strand pure braid.
\begin{figure}[h!tp]
\centering
\includegraphics[width=.98\textwidth]{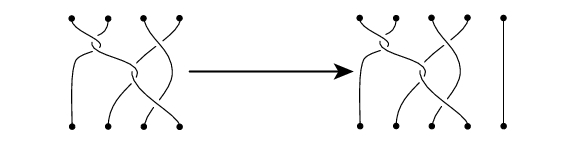}
\caption{The splitting $\p_{g-1}\to\p_{g}$.}
\end{figure}
We require the following theorem which is proved in \cite{FR}.
\begin{theorem}[Falk and Randell]\label{thm:fr}
Given a split short exact sequence
\[
1\to A\to B\to C\to1
\]
and regarding $A$ and $C$ as subgroups of $B$, if $[C,A]\subset[A,A]$ then there is a short exact sequence
\[
1\to\frac{A_k}{A_{k+1}}\to\frac{B_k}{B_{k+1}}\to\frac{C_k}{C_{k+1}}\to1
\]
for all $k$.
\end{theorem}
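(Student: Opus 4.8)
The plan is to exploit that a split extension is a semidirect product, $B = A \rtimes C$, and to pin down the lower central series of $B$ completely. Identify $C$ with its image under the splitting, so that every element of $B$ factors uniquely as $a\,c$ with $a \in A$, $c \in C$, and let $\pi\colon B \to C$ be the projection. Since the terms of the lower central series are verbal subgroups and $\pi$ is onto, $\pi(B_k) = C_k$, so inclusion and projection induce natural maps $A_k/A_{k+1} \to B_k/B_{k+1} \to C_k/C_{k+1}$, the right-hand one surjective. A short diagram chase using the splitting shows that the kernel of the right-hand map is the image of $A \cap B_k$, and that the left-hand map is injective as soon as $A \cap B_{k+1} = A_{k+1}$. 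Thus everything reduces to proving, for all $k$, the single identity
\[
B_k = A_k\, C_k ,
\]
from which $A \cap B_k = A_k$ follows instantly (if $a = a_k c_k$ with $a \in A$, $a_k \in A_k$, $c_k \in C_k$, then $c_k = a_k^{-1}a \in A \cap C = 1$), and the three natural maps then fit into the asserted short exact sequence.

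First I would prove $B_k = A_k C_k$ by induction on $k$, the case $k = 1$ being trivial. Assuming $B_k = A_k C_k$ --- a subgroup of $B$, since $A_k$ is characteristic in $A$ and hence normal in $B$ --- the subgroup $B_{k+1} = [B, B_k]$ is generated, as a normal subgroup of $B$, by the commutators $[s, t]$ with $s \in A \cup C$ and $t \in A_k \cup C_k$, i.e.\ by the four families $[A, A_k]$, $[A, C_k]$, $[C, A_k]$, $[C, C_k]$. The first lies in $A_{k+1}$, the last equals $C_{k+1}$, and the middle two are the crucial ones: granting
\[
[C, A_k] \subseteq A_{k+1} \qquad\text{and}\qquad [A, C_k] \subseteq A_{k+1},
\]
all four families lie in $A_{k+1} C_{k+1}$, which is again normal in $B$ and obviously contained in $B_{k+1}$, forcing $B_{k+1} = A_{k+1} C_{k+1}$.

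The technical heart of the proof --- and the step I expect to be the main obstacle --- is to boost the degree-one hypothesis $[C, A] \subseteq [A,A]$ to the two containments just displayed. For $[C, A_k] \subseteq A_{k+1}$, observe that conjugation by $c \in C$ is an automorphism of $A$ and therefore induces a graded automorphism of the associated graded Lie ring $\Lie(A) = \bigoplus_k A_k/A_{k+1}$; the hypothesis says exactly that this automorphism is the identity on the degree-one part $A/A_2$, and since $\Lie(A)$ is generated as a Lie ring in degree one, it must be the identity in every degree, i.e.\ $[C, A_k] \subseteq A_{k+1}$. For the other containment I would prove the more general estimate $[A_j, C_l] \subseteq A_{j+l}$ for all $j, l \ge 1$ by induction on $l$: the case $l = 1$ is exactly what was just established, and the inductive step follows from the three subgroups lemma applied to $A_j$, $C$ and $C_l$ (writing $C_{l+1} = [C, C_l]$), together with the normality of $A_{j+l+1}$ in $B$. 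Taking $j = 1$, $l = k$ gives $[A, C_k] \subseteq A_{k+1}$ and completes the induction for $B_k = A_k C_k$, at which point the reduction of the first paragraph delivers the short exact sequence.
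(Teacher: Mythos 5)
Your proposal is correct, but note that the paper itself gives no proof of this statement at all: it simply cites Falk and Randell \cite{FR}. What you have written is essentially a self-contained reconstruction of the Falk--Randell argument for ``almost direct products'': the hypothesis $[C,A]\subset[A,A]$ says conjugation by $C$ is trivial on $A/[A,A]$, the generation of $\Lie(A)$ in degree one boosts this to $[C,A_k]\subset A_{k+1}$ for all $k$, the three subgroups lemma upgrades it to $[A_j,C_l]\subset A_{j+l}$, and induction then gives $B_k=A_kC_k$ (in fact $B_k=A_k\rtimes C_k$), from which $A\cap B_k=A_k$ and the exact sequences of lower central series quotients follow exactly as in your reduction. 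All the steps check out; the one place you assert slightly more than you have verified at that point is the claim that $A_{k+1}C_{k+1}$ is normal in $B$: conjugating $c'\in C_{k+1}$ by $a\in A$ produces $[a,c']c'$, so normality is not automatic but requires $[A,C_{k+1}]\subset A_{k+1}$, i.e.\ the $l=k+1$ instance of your general estimate $[A_j,C_l]\subset A_{j+l}$. Since you prove that estimate for all $j,l$ independently of the induction on $k$, this is only a matter of making the dependence explicit, not a gap. Your route buys the thesis a proof it otherwise outsources, at the cost of the standard commutator-calculus machinery (generation of $[H,K]$ by conjugates of commutators of generators, and the three subgroups lemma), which is exactly the toolkit Falk and Randell use.
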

We wish to apply this theorem in the case of the sequence \eqref{eq:pbraid}.  We need only to check that $[p,f]\subset[F^{g-1},F^{g-1}]$, where $p\in\p_{g-1}$ and $f\in F^{g-1}$.  In fact it suffices to check this on a set of  generators for the groups $\p_{g-1}$ and $F^{g-1}$.  We appeal to Artin's presentation of the pure braid group \cite{artin}.  That is, the generators of $\p_g$ are denoted $s_{ij}$, where $i$ and $j$ range over the set $\{1,\cdots,g\}$, with $i<j$.  The element $s_{ij}$ ``wraps'' the $j^\textrm{th}$ strand around the $i^\textrm{th}$ strand as pictured in Figure~\ref{fig:artinpres}.
\begin{figure}[h!tp]
\centering
\begin{picture}(435,100)(0,0)
\includegraphics[width=.98\textwidth]{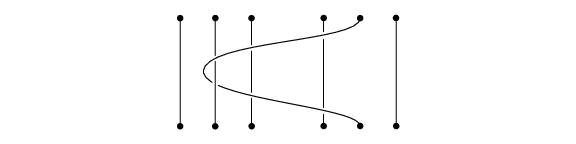}
\put(-230,10){\large$\cdots$}
\put(-326,10){\large$\cdots$}
\put(-130,10){\large$\cdots$}
\put(-277,0){$i$}
\put(-169,0){$j$}
\end{picture}
\caption{The element $s_{ij}$ from Artin's presentation of $\p_g$.}\label{fig:artinpres}
\end{figure}
The relations are as follows:
\begin{equation}\label{eq:artinrels}
s_{rs}^{-1}s_{ij}s_{rs}=\left\{
\begin{array}{cc}
s_{ij}&{\textrm{if}}\ r<s<i<j\\
s_{ij}&{\textrm{if}}\ i<r<s<j\\
s_{rj}s_{ij}s_{rj}^{-1}&{\textrm{if}}\ r<s=i<j\\
(s_{ij}s_{sj})s_{ij}(s_{ij}s_{sj})^{-1}&{\textrm{if}}\ r=i<s<j\\
\left[s_{rj},s_{sj}\right]s_{ij}\left[s_{rj},s_{sj}\right]^{-1}&{\textrm{if}}\ r<i<s<j
\end{array}
\right.
\end{equation}
Clearly $F^{g-1}$ is generated by $\{s_{ig}\}_{1\le i<g}$ and $\p_{g-1}$ is generated by $\{s_{ij}\}_{1\le i<j<g}$.  The desired result is obtained by replacing $j$ with $g$ in \eqref{eq:artinrels} and post-multiplying both sides of the equation by $s_{ig}^{-1}$.  Also, note that all possibilities are covered by the conditions listed at the right of \eqref{eq:artinrels}.  Therefore, Theorem~\ref{thm:fr} gives an exact sequence
\[
0
\to
\frac{(F^{g-1})_k}{(F^{g-1})_{k
+1}}
\to
\frac{(\p_g)_k}{(\p_g)_{k
+1}}
\to
\frac{(\p_{g-1})_k}{(\p_{g-1})_{k
+1}}
\to
0;
\]
thus, an isomorphism
\[
\frac{(\p_g)_k}{(\p_g)_{k
+1}}\cong\frac{(\p_{g-1})_k}{(\p_{g-1})_{k
+1}}\oplus\frac{(F^{g-1})_k}{(F^{g-1})_{k
+1}};
\]
hence,
\begin{equation}\label{eq:purebraidquotient}
\frac{(\p_g)_k}{(\p_g)_{k
+1}}\cong\bigoplus_{m=3}^g\frac{(F^{m-1})_k}{(F^{m-1})_{k
+1}}.
\end{equation}

\subsection{Proof of Theorem~\ref{robusthomeo}}\label{subsec:proof}

\begin{proof}[Proof of Theorem~\ref{robusthomeo}]
It only remains to prove that Inequality~\eqref{simpineq}:
\begin{equation*}\tag{5.2}
\frac{g(g+1)}{2}<\dim\im(j_*\circ\tau_k)^\R,
\end{equation*}
holds for large $g$.
The isomorphism~\eqref{eq:purebraidquotient} allows explicit computation of lower bounds for $\dim\im(j_*\circ\tau_k)^\R$.  In particular 
\begin{equation}\label{levinesestimate}
\sum_{m=3}^g\dim\left( \frac {(F^{m-1})_k} {(F^{m-1})_{k+1}}\right)^\R\le\dim\im(j_*\circ\tau_k)^\R.
\end{equation}
From Section~\ref{sec:freeLierings} we know that
\[
\dim\left( \frac {(F^{g-1})_k} {(F^{g-1})_{k+1}}\right)^\R
\]
is a polynomial in the variable $g$, of degree $k$, with positive leading coefficient.  It follows that
\begin{equation}\label{Fineq}
\frac{g(g+1)}{2}<\sum_{m=3}^g\dim\left( \frac {(F^{m-1})_k} {(F^{m-1})_{k+1}}\right)^\R\
\end{equation}
for sufficiently large $g$, provided that $k\ge3$, since the left-hand side is a polynomial of degree 2 and the right-hand side of degree $k$.  Combining Inequalities~\eqref{levinesestimate} and~\eqref{Fineq} yields Theorem~\ref{robusthomeo} for $k\ge3$.

In the case $k=2$, the Torelli case, $\im(j_*\circ\tau_k)$ is known exactly (see \cite{J80}).  It turns out that
\[
\dim\im(j_*\circ\tau_2)^\R={{g}\choose{3}},
\]
which is a polynomial of degree 3.  Thus, Inequality~\eqref{simpineq} holds in the case $k=2$, for large $g$. 
\end{proof}

\section{Genus considerations}
In this section we address the problem of how large the genus has to be for Inequality~\eqref{Fineq} to hold.  
From the results of Section~\ref{sec:freeLierings},
Inequality~\eqref{Fineq} becomes:
\begin{equation}\label{Cineq}
\frac{g(g+1)}{2}<\sum_{m=3}^gC(k,m-1).
\end{equation}
We wish to check Inequality~\eqref{Cineq} for some small number of ``boundary'' cases and deduce for which cases exactly it does and does not hold.  The following three propositions allow us to do so.
\begin{proposition}\label{kincreasing}
$C(k+1,m)\ge C(k,m)$, provided $k\ge2$ and $m\ge2$.  Hence, if Inequality~\eqref{Cineq} is satisfied for the pair $(k,g)$, then it is satisfied for the pair $(k+1,g)$, provided $k\ge2$ and $g\ge3$.
\end{proposition}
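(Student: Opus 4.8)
The second statement follows at once from the first: Inequality~\eqref{Cineq} reads $g(g+1)/2<\sum_{m=3}^{g}C(k,m-1)$, and in each summand the relevant free rank is $m-1\ge 2$, so the first statement (applied with rank $m-1$) gives $C(k,m-1)\le C(k+1,m-1)$ for every such $m$, and summing preserves the strict inequality. So the whole task is to prove $C(k+1,m)\ge C(k,m)$ for all $k\ge 2$ and $m\ge 2$.

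To do this I would use Witt's formula (Theorem~\ref{witt}) to peel off the leading term: $k\,C(k,m)=m^{k}+\sum_{d\mid k,\;d\ge 2}\mu(d)\,m^{k/d}$. The remaining exponents satisfy $k/d\le\lfloor k/2\rfloor$, so for $m\ge 2$ the tail is bounded by $\sum_{j=1}^{\lfloor k/2\rfloor}m^{j}<m^{\lfloor k/2\rfloor+1}$, giving $k\,C(k,m)>m^{k}-m^{\lfloor k/2\rfloor+1}$; at the same time $k\,C(k,m)\le m^{k}$, since the M\"obius inverse of Witt's formula is $m^{k}=\sum_{d\mid k}d\,C(d,m)$, a sum of nonnegative terms. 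Substituting the lower bound at index $k+1$ and the upper bound at index $k$ into the inequality to be proved, it suffices to show $\dfrac{m^{k+1}-m^{\lfloor(k+1)/2\rfloor+1}}{k+1}\ge\dfrac{m^{k}}{k}$. Clearing denominators and dividing through by $m^{\lfloor(k+1)/2\rfloor+1}$ (and using $k-\lfloor(k+1)/2\rfloor-1=\lfloor k/2\rfloor-1$) turns this into $m^{\lfloor k/2\rfloor-1}\bigl(k(m-1)-1\bigr)\ge k$, which holds for every $m\ge 2$ as soon as $k\ge 4$: then $\lfloor k/2\rfloor-1\ge 1$ forces $m^{\lfloor k/2\rfloor-1}\ge 2$, while $k(m-1)-1\ge k-1$, so the left side is at least $2(k-1)\ge k$.

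This leaves only $k=2$ and $k=3$, which I would dispatch by hand from the closed forms: $C(3,m)-C(2,m)=\tfrac{1}{6}\,m(m-1)(2m-1)$ and $C(4,m)-C(3,m)=\tfrac{1}{12}\,m(m-1)(m+1)(3m-4)$, both manifestly $\ge 0$ for $m\ge 2$. (That the hypothesis $k\ge 2$ is needed can be seen at $k=1$: $C(1,m)=m$ exceeds $C(2,m)=\binom{m}{2}$ for $m=2$.) A more conceptual alternative, if one is willing to invoke the standard identification of $C(k,m)$ with the number of Lyndon words of length $k$ on an $m$-letter alphabet, is to note that for $k\ge 2$ the map $w\mapsto w\ell$ (appending the largest letter $\ell$) is an injection from Lyndon words of length $k$ to those of length $k+1$: a Lyndon word of length $\ge 2$ begins with a letter strictly smaller than its last letter, hence strictly smaller than $\ell$, and one checks readily that $w\ell$ is then smaller than every proper suffix of itself, i.e.\ again Lyndon; this yields $C(k,m)\le C(k+1,m)$ directly.

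\textbf{Expected obstacle.} Nothing here is hard---it is a counting estimate---so the only real care needed is in the bookkeeping: choosing the crude bounds so that exactly the two cases $k=2,3$ fall outside their scope, and then checking that the explicit polynomials in $m$ are nonnegative for \emph{all} $m\ge 2$ (not merely for a few small values). In the Lyndon-word alternative the corresponding care goes into the (routine) verification that appending the maximal letter preserves the Lyndon property, and into citing the combinatorial fact that $C(k,m)$ counts Lyndon words, which is not proved in the text above.
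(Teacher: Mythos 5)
Your proof is correct, but it takes a genuinely different route from the paper. The paper's argument is purely combinatorial and internal to Definition~\ref{basiccomms}: it exhibits an explicit injection from the basic commutators of weight $k$ to those of weight $k+1$, sending $[w_1,w_2]\mapsto[[w_1,v],w_2]$ for a suitable weight-one $v$, and injectivity is immediate because $[w_1,w_2]$ can be read off from the image; the hypotheses $k\ge2$, $m\ge2$ enter only to guarantee that weight-$k$ basic commutators have the form $[w_1,w_2]$ and that such a $v$ exists. You instead work numerically from Theorem~\ref{witt}: the upper bound $k\,C(k,m)\le m^k$ (via the necklace identity $m^k=\sum_{d\mid k}d\,C(d,m)$, which is standard but not stated in the paper) together with the tail estimate $k\,C(k,m)>m^k-m^{\lfloor k/2\rfloor+1}$ reduces the claim to $m^{\lfloor k/2\rfloor-1}\bigl(k(m-1)-1\bigr)\ge k$, which settles $k\ge4$, and you finish $k=2,3$ with the explicit factorizations (both of which check out). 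Your Lyndon-word alternative is the closest in spirit to the paper --- an injection realized by appending the largest letter, with the first-letter/last-letter observation playing the role of the paper's condition $w_1>v$ --- but it imports the external identification of $C(k,m)$ with Lyndon word counts, which you rightly flag as needing a citation. What each approach buys: the paper's injection is shorter and self-contained (though one must verify the image really is a basic commutator, a point your Lyndon check makes more transparent), while your Witt-formula estimate is more robust bookkeeping that also quantifies the growth of $C(k,m)$, at the cost of invoking Möbius inversion and two hand-checked small cases. The deduction of the second statement of Proposition~\ref{kincreasing} from the first by summing over $m=3,\ldots,g$ in \eqref{Cineq} is exactly as in the paper.
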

\begin{proof}
We define an injective function from the set of basic commutators of weight $k$ to the set of basic commutators of weight $k+1$ by $[w_1,w_2]\mapsto[[w_1,v],w_2]$, where $v$ is a basic commutator of weight 1, with $w_1>v$ and $w_2\ge v$.  This function is obviously injective since $[w_1,w_2]$ can be reconstructed from $[[w_1,v],w_2]$.  Note that, for a basic commutator of weight $k$ to have the form $[w_1,w_2]$ it is both necessary and sufficient that $k\ge2$ and $m\ge2$.
\end{proof}

\begin{proposition}\label{gincreasing}
If Inequality~\eqref{Cineq} is satisfied for a pair $(k,g)$, with $k=3$ and $g\ge3$, then it is also satisfied for the pair $(k,g+1)$. 
\end{proposition}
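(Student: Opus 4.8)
The plan is to argue by induction on $g$, comparing the increments of the two sides of Inequality~\eqref{Cineq} when $g$ is increased by one. Fix $k=3$ and suppose the inductive hypothesis $\frac{g(g+1)}{2}<\sum_{m=3}^{g}C(3,m-1)$. Passing from $g$ to $g+1$, the left-hand side increases by
\[
\frac{(g+1)(g+2)}{2}-\frac{g(g+1)}{2}=g+1,
\]
while the right-hand side increases by exactly the new summand $C(3,g)$. Hence the inequality for the pair $(3,g+1)$ will follow from the inductive hypothesis as soon as one knows $C(3,g)\ge g+1$ for every $g\ge 3$.

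To establish $C(3,g)\ge g+1$ I would invoke Witt's formula (Theorem~\ref{witt}): the divisors of $3$ are $1$ and $3$, with $\mu(1)=1$ and $\mu(3)=-1$, so $C(3,g)=\tfrac13\bigl(g^3-g\bigr)=\tfrac13\,g(g-1)(g+1)$. For $g\ge 3$ we have $g(g-1)\ge 6$, whence $C(3,g)=\tfrac{g(g-1)}{3}\,(g+1)\ge 2(g+1)\ge g+1$, which is what is needed.

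There is no genuine obstacle in this argument; it is entirely elementary once Witt's formula is in hand. The only point requiring a little care is the range of $g$: the estimate $g(g-1)\ge 3$ (hence $C(3,g)\ge g+1$) first holds at $g=3$ and fails at $g=2$, so the restriction $g\ge 3$ in the hypothesis is exactly what makes the induction step go through, and the conclusion then holds for all $g\ge 3$.
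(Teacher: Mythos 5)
Your proof is correct and follows essentially the same route as the paper: compare the increments of both sides when $g$ increases by one, reducing to the inequality $g+1\le C(3,g)=(g^3-g)/3$, which holds for $g\ge3$. The only cosmetic difference is that you verify the final estimate by factoring $g^3-g=g(g-1)(g+1)$ rather than by the paper's offhand ``take derivatives.''
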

\begin{proof}
By assumption we have:
\[
\frac{g(g+1)}{2}<\sum_{m=3}^gC(3,m-1).
\]
If we can show that
\begin{equation}\label{deltaC}
\frac{(g+1)(g+2)}{2}-\frac{g(g+1)}{2}\le\sum_{m=3}^{g+1}C(3,m-1)-\sum_{m=3}^gC(3,m-1),
\end{equation}
then we are done.  The left hand side of Inequality~\eqref{deltaC} is equal to:
\[
\frac{g+1}{2}(g+2-g)=g+1.
\]
The right hand side is $C(3,g)=(g^3-g)/3$, by the Witt formula of Theorem~\ref{witt}.  It is easy to see (for instance by taking derivatives) that $g+1\le (g^3-g)/3$, provided that $g\ge3$.
\end{proof}

Finally, in the case $k=2$, the Torelli case, we need the following.
\begin{proposition}\label{prop:tincreasing}
If $k=2$, then Inequality~\eqref{ineq} holds provided $g\ge7$.
\end{proposition}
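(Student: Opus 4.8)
The plan is to reduce the statement, exactly as was done throughout Subsection~\ref{subsec:reduce}, to the equivalent Inequality~\eqref{simpineq}, then substitute Johnson's exact computation of $\dim\im(j_*\circ\tau_2)^\R$ and solve the resulting quadratic inequality in $g$.

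First I would recall that, by the discussion preceding Inequality~\eqref{simpineq}, Inequality~\eqref{ineq} holds (in the case $k=2$) if and only if
\[
\frac{g(g+1)}{2}<\dim\im(j_*\circ\tau_2)^\R .
\]
Next, as already noted in the proof of Theorem~\ref{robusthomeo}, in the Torelli case the image $\im(j_*\circ\tau_2)$ is known exactly from Johnson's work \cite{J80}, and one has
\[
\dim\im(j_*\circ\tau_2)^\R=\binom{g}{3}=\frac{g(g-1)(g-2)}{6}.
\]
Thus it suffices to verify
\[
\frac{g(g+1)}{2}<\frac{g(g-1)(g-2)}{6}
\]
for all integers $g\ge 7$.

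The remaining step is a routine manipulation. Clearing denominators and dividing through by the positive integer $g$, the inequality becomes $3(g+1)<(g-1)(g-2)$, i.e.\ $g^2-6g-1>0$, whose larger root is $3+\sqrt{10}$. Since $3+\sqrt{10}<7$, the inequality holds for every integer $g\ge 7$, which proves the proposition. I would also record that at $g=6$ one has $g^2-6g-1=-1<0$, so this bound cannot be improved by the present dimension count; this accounts for the genus hypothesis in the $k=2$ part of Theorem~\ref{main}.

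The main obstacle here is essentially cosmetic: there is no analytic difficulty at all, and the only point requiring care is invoking Johnson's identification (the one used in the proof of Proposition~\ref{prop:taugrading}) correctly, so that after composing with $j_*$ the relevant dimension is exactly $\binom{g}{3}$ rather than, say, $\binom{2g}{3}-2g$; once that formula is in hand the conclusion is immediate.
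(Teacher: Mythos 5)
Your proof is correct and matches the paper's argument: both reduce Inequality~\eqref{ineq} to $\frac{g(g+1)}{2}<\binom{g}{3}$ via Johnson's computation $\dim\im(j_*\circ\tau_2)^\R=\binom{g}{3}$ and then verify the numerical threshold $g\ge7$. Your explicit root-finding ($g^2-6g-1>0$ with larger root $3+\sqrt{10}$) simply makes explicit the arithmetic that the paper leaves as a direct check.
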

\begin{proof}
As mentioned at the end of Section~\ref{robusthomeos}, it is known that
\[
\dim\im(j_*\circ\tau_2)^\R={{g}\choose{3}}.
\]
Inequality~\eqref{ineq} then becomes:
\begin{equation}\label{twoineq}
\frac{g(g+1)}{2}<{{g}\choose{3}},
\end{equation}
which holds for $g\ge7$, but not for $g<7$.
\end{proof}
For the first several values of $k$ we record the polynomial $C(k,m-1)$:
\begin{align*}
C(3,m-1)=&\frac{(m-1)^3-(m-1)}{3}\\
C(4,m-1)=&\frac{(m-1)^4-(m-1)^2}{4}\\
C(5,m-1)=&\frac{(m-1)^5-(m-1)}{5}\\
C(6,m-1)=&\frac{(m-1)^6-(m-1)^3-(m-1)^2+(m-1)}{6}\\
C(7,m-1)=&\frac{(m-1)^7-(m-1)}{7}\\
C(8,m-1)=&\frac{(m-1)^8-(m-1)^4}{8}
\end{align*}
Using these formulas we calculate the values of the right-hand side of \eqref{Cineq}, for small $k$ and $g$, in Figure~\ref{RHSvals}.  Of course, for $k=2$ we use ${{g}\choose{3}}$. 
\begin{figure}[h!tp]
\centering
\quad\ \subfigure[Values of the right-hand side of Inequality~\eqref{Cineq}]
{\begin{tabular}{c||*{7}{c|}}
\cline{1-5}
$k=8$&0&30&840&9000\\
\cline{1-5}
$k=7$&0&18&330&2670\\
\cline{1-6}
$k=6$&0&9&125&795&3375\\
\cline{1-8}
$k=5$&0&6&54&258&882&2436&5796\\
\cline{1-8}
$k=4$&0&3&21&81&231&546&1134\\
\cline{1-8}
$k=3$&0&2&10&30&70&140&252\\
\cline{1-8}
$k=2$&0&1&4&10&20&35&56\\
\hline\hline
&g=2&g=3&g=4&g=5&g=6&g=7&g=8
\end{tabular}}\\
\subfigure[Values of $g(g+1)/2$]
{\begin{tabular}{c||*{7}{c|}}
\cline{1-8}
$g(g+1)/2$&3&6&10&15&21&28&36\\
\hline\hline
&g=2&g=3&g=4&g=5&g=6&g=7&g=8
\end{tabular}}\quad\ 
\caption{Values of the two sides of Inequalities~\eqref{Cineq} and \eqref{twoineq}}\label{RHSvals}
\end{figure}
From Figure~\ref{RHSvals} we can easily see when Inequalities~\eqref{Cineq} and \eqref{twoineq} do and do not hold.  We record this data in Figure~\ref{binary}.  The darkly shaded boxes signify that the appropriate inequality does hold and has been checked explicitly (by observing Figure~\ref{RHSvals}).  Vertical arrows indicate cases where the inequality holds due to Proposition~\ref{kincreasing} and horizontal arrows indicate where it holds due to Propositions~\ref{gincreasing} or \ref{prop:tincreasing}.  Squares left blank are where the inequality does not hold.
\begin{figure}[h!tp]
\centering
\begin{picture}(435,400)(0,0)
\includegraphics[width=.98\textwidth]{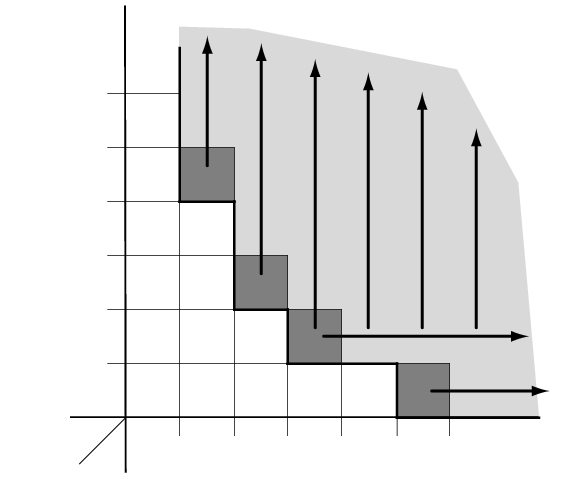}
\put(-155,113){Prop.~\ref{gincreasing}}
\put(-90,90){Prop.~\ref{prop:tincreasing}}
\put(-232,220){\rotatebox{90}{Prop.~\ref{kincreasing}}}
\put(-65,160){\Large$\cdots$}
\put(-375,46){\Large$k$}
\put(-375,80){\Large2}
\put(-375,120){\Large3}
\put(-375,161){\Large4}
\put(-375,202){\Large5}
\put(-375,242){\Large6}
\put(-375,283){\Large7}
\put(-355,30){\Large$g$}
\put(-324,30){\Large2}
\put(-283,30){\Large3}
\put(-242,30){\Large4}
\put(-201,30){\Large5}
\put(-160,30){\Large6}
\put(-119,30){\Large7}
\end{picture}
\caption{Region where Inequalities~\eqref{Cineq} and \eqref{twoineq} hold.}
\label{binary}
\end{figure}

\end{chapter}

\begin{chapter}{The main theorem}\label{sec:main}

\begin{theorem}\label{main}
Given an integer $k\ge2$, there are homeomorphisms $[f]\in\J(k)-\J(k+1)$ that do not extend to any handlebody, provided that the ordered pair $(g,k)$ lies in the shaded region shown in Figure~\ref{binary}.
  
Furthermore, $f^n$ does not extend to any handlebody, for all integers $n\ne0$.
\end{theorem}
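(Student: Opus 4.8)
The plan is to obtain Theorem~\ref{main} almost directly from the machinery already in place, chiefly Corollary~\ref{robust} and Theorem~\ref{robusthomeo}. First I would apply Theorem~\ref{robusthomeo} together with the genus estimates of Propositions~\ref{kincreasing}, \ref{gincreasing} and \ref{prop:tincreasing} --- whose combined effect is precisely to identify the shaded region of Figure~\ref{binary} --- to produce, for each pair $(g,k)$ in that region, a $k$-robust homeomorphism $f$ of $\Sigma$. By the definition of a $k$-robust homeomorphism this means $[f]\in\J(k)$ and $j_*\circ\tau_k[f]\ne 0$ in $H_1(\h)\otimes\bar\Lie_k$ for every handlebody $j:\Sigma\hookrightarrow\h$ bounded by $\Sigma$; Corollary~\ref{robust} then says immediately that $f$ does not extend to any handlebody.

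Next I would verify that $[f]\in\J(k)-\J(k+1)$. The inclusion $[f]\in\J(k)$ is part of the definition of robustness. For the other half, note that robustness gives $\tau_k[f]\ne 0$, and by Lemma~\ref{lemma1} we have $\tau_k[f]=i_*\circ\tau_{1,k}[f]$, so $\tau_{1,k}[f]\ne 0$; since $\ker\tau_{1,k}=\J(k+1)$ by Lemma~\ref{oda}, this forces $[f]\notin\J(k+1)$. This establishes the first assertion of the theorem.

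For the statement about powers the key observation is that $\tau_k$ is a homomorphism into a torsion-free group. Since $\J(k)$ is a subgroup of $\Mod(\Sigma,D)$, for any nonzero integer $n$ we have $[f^n]\in\J(k)$, and, as $\tau_k:\J(k)\to H_1(\Sigma)\otimes\Lie_k$ is a homomorphism, $\tau_k[f^n]=n\,\tau_k[f]$. Fixing any handlebody $j:\Sigma\hookrightarrow\h$ this gives $j_*\circ\tau_k[f^n]=n\bigl(j_*\circ\tau_k[f]\bigr)$. Now $H_1(\h)\otimes\bar\Lie_k$ is free abelian: $\pi_1(\h)$ is free, so $\bar\Lie_k$ is a finite-rank free abelian group (Section~\ref{sec:freeLierings}) and $H_1(\h)\cong\z^g$, and a tensor product of free abelian groups is free abelian, hence torsion-free. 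Since $j_*\circ\tau_k[f]\ne 0$ and $n\ne 0$, we conclude $j_*\circ\tau_k[f^n]\ne 0$. As this holds for every handlebody, $f^n$ is itself $k$-robust, so Corollary~\ref{robust} shows that $f^n$ does not extend to any handlebody.

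There is not really a hard step here: the substance of the theorem is carried entirely by Theorem~\ref{robusthomeo} and Corollary~\ref{robust}, and what remains is bookkeeping. The two points that deserve a moment's attention are that robustness is strong enough to push $[f]$ out of $\J(k+1)$ (which is exactly why one passes through $\tau_{1,k}$, whose kernel is $\J(k+1)$), and that the coefficient group $H_1(\h)\otimes\bar\Lie_k$ is torsion-free, so that no nonzero power of $f$ can kill the obstruction $j_*\circ\tau_k[f]$; it is this last point that upgrades the conclusion from odd $n$, as in the previously known partial results, to all $n\ne 0$.
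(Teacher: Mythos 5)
Your proof is correct and follows the paper's own argument essentially step for step: reduce to producing robust homeomorphisms via Corollary~\ref{robust} and Theorem~\ref{robusthomeo}, deduce $[f]\notin\J(k+1)$ from $\ker\tau_{1,k}=\J(k+1)$, and observe that robustness passes to nonzero powers. The one place you go beyond the paper is that the paper merely \emph{recalls} that $f$ is robust if and only if $f^n$ is for $n\ne 0$, whereas you supply the actual reason --- $\tau_k$ is a homomorphism, so $j_*\circ\tau_k[f^n]=n\bigl(j_*\circ\tau_k[f]\bigr)$, and $H_1(\h)\otimes\bar\Lie_k$ is free abelian hence torsion-free --- which is exactly the right justification.
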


It follows that:

\begin{corollary}\label{mc}
If $\Sigma$ is a closed, orientable surface of genus 3 or greater, then there are homeomorphisms of $\Sigma$ that do not extend to any handlebody, lying arbitrarily deep in the Johnson filtration.
\end{corollary}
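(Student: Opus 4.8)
The plan is to assemble the machinery already in place. By Corollary~\ref{robust}, it suffices, for each admissible pair $(g,k)$, to exhibit a \emph{$k$-robust} homeomorphism, i.e.\ some $[f]\in\J(k)$ with $j_*\circ\tau_k[f]\ne 0$ for \emph{every} handlebody $j:\Sigma\hookrightarrow\h$ bounded by $\Sigma$; such an $f$ then extends to no handlebody. First I would invoke Theorem~\ref{robusthomeo}: its proof (Subsection~\ref{subsec:proof}) shows that a $k$-robust homeomorphism exists as soon as Inequality~\eqref{simpineq} holds, because the reduction of Subsection~\ref{subsec:reduce} then forces the smooth map $P\colon E\to\im(\tau_k)^\R$ to miss a rational — hence, after clearing denominators, an integral — point $v$ of $\im(\tau_k)$, and any such $v$ equals $\tau_k[f]$ for some $[f]\in\J(k)$ lying outside $\ker(j_*)$ for every handlebody (here one uses that $\mathbb L$ is larger than the set of kernels of inclusion-induced maps, so missing $v$ over all of $\mathbb L$ is more than enough). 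Combining this with the monotonicity Propositions~\ref{kincreasing}, \ref{gincreasing}, \ref{prop:tincreasing} and the explicit values in Figure~\ref{RHSvals} identifies the set of $(g,k)$ for which Inequality~\eqref{simpineq} holds precisely as the shaded region of Figure~\ref{binary}.

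Next I would check that the homeomorphism produced actually lies in $\J(k)-\J(k+1)$ rather than merely in $\J(k)$: $k$-robustness gives $\tau_k[f]\ne 0$, while $\J(k+1)\subseteq\ker\tau_k$ (immediate from the definitions, or from $\ker\tau_{1,k}=\J(k+1)$ together with Lemma~\ref{lemma1}), so $[f]\notin\J(k+1)$.

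For the statement about powers, the point is that $\tau_k$ is a homomorphism from the group $\J(k)$ into an abelian group, so $j_*\circ\tau_k$ is too, and its target $H_1(\h)\otimes\bar\Lie_k$ is free abelian (indeed $\pi_1(\h)$ is free, so $\bar\Lie_k$ is free abelian of rank given by Witt's formula, and $H_1(\h)$ is free abelian). Since $\J(k)$ is a subgroup, $[f]^n=[f^n]\in\J(k)$ for every $n\in\z$, and $j_*\circ\tau_k[f^n]=n\,(j_*\circ\tau_k[f])$, which is nonzero whenever $n\ne 0$. Hence $f^n$ is again $k$-robust, and Corollary~\ref{robust} shows it does not extend to any handlebody.

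Finally, Corollary~\ref{mc} is read off from Figure~\ref{binary}: for each fixed $g\ge 3$ the admissible exponents form a ray $\{k\ge k_0(g)\}$ (with $k_0(3)=6$, $k_0(4)=4$, $k_0(5)=k_0(6)=3$, and $k_0(g)=2$ for $g\ge 7$), so given any $N$ one chooses $k\ge\max(N,k_0(g))$ and the homeomorphism just built lies in $\J(k)\subseteq\J(N)$ yet extends to no handlebody. At this stage the main theorem is essentially a bookkeeping assembly; the only points needing care are the inclusion $\J(k+1)\subseteq\ker\tau_k$, the torsion-freeness used for the power statement, and matching the monotonicity propositions against the table of Figure~\ref{RHSvals} — none of which is deep. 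The genuine difficulty lay earlier, in Levine's estimate (Lemma~\ref{levine}) and the Witt-formula dimension count, so I anticipate no serious obstacle in completing the proof from here.
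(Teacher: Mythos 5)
Your proof follows essentially the same route as the paper: invoke Corollary~\ref{robust} and Theorem~\ref{robusthomeo} to produce robust homeomorphisms, use $\J(k+1)=\ker\tau_{1,k}$ (equivalently $\J(k+1)\subseteq\ker\tau_k$) to place them in $\J(k)-\J(k+1)$, handle powers by observing $j_*\circ\tau_k$ is a homomorphism into a torsion-free group, and then deduce the corollary for each fixed $g\ge3$ by choosing $k$ large via the genus table. The paper's own proof is more terse (it simply ``recalls'' that $f$ robust iff $f^n$ robust), but your spelled-out justification of the torsion-freeness and of $\J(k+1)\subseteq\ker\tau_k$ is correct and fills exactly the gaps the paper leaves to the reader.
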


\begin{proof}
By corollary~\ref{robust}, we need only to prove the existence of robust homeomorphisms in $\J(k)-\J(k+1)$.  Since $\J(k+1)$ is the kernel of $\tau_{1,k}$, any robust homeomorphism is clearly not in $\J(k+1)$.  Therefore, Theorem~\ref{robusthomeo} and the computation of the previous section implies the first statement of the theorem.  For the second part we recall that a homeomorphism $f$ is robust if and only if $f^n$ is robust, for $n\ne0$.
\end{proof}

Let us summarize what is known about homeomorphisms that do not extend to any handlebody in relation to the Johnson filtration.  As mentioned earlier, Hain proved the content of Theorem~\ref{main} by different methods. Hain also showed that the result holds for the cases $(g=3,k=5)$ and $(g=4,k=3)$.  Next we turn our attention to the result attributed to Casson and Johannson-Johnson.  For genus 2 their method finds homeomorphisms in $\J(3)-\J(4)$.  For larger genera their method finds homeomorphisms in $\J(2)-\J(4)$, but it is not clear whether these homeomorphisms lie in $\J(3)$ or not.  

The question of whether there are homeomorphisms lying arbitrarily deep in the Johnson filtration for genus 2 appears to be completely open.
\end{chapter}

\end{spacing}

    \bibliographystyle{amsalpha}
    \bibliography{johnsonbib}

\end{document}